\documentclass[9pt,reqno]{amsart}
\usepackage{a4wide}
\usepackage[utf8x]{inputenc}
\usepackage{ucs}
\usepackage{amsmath,amssymb,paralist,amsthm,array,enumitem}

\setcounter{tocdepth}{3}

\newtheorem{theorem}{Theorem}

\newtheorem{definition}{Definition}
\newtheorem{lemma}[definition]{Lemma}
\newtheorem{corollary}[definition]{Corollary}

\theoremstyle{definition}

\newtheorem{remark}[definition]{Remark}

\newtheorem*{theorem*}{Theorem}
\newtheorem*{definition*}{Definition}
\newtheorem*{proposition*}{Proposition}
\newtheorem*{lemma*}{Lemma}

\newcommand{\hol}{\mathrm{hol}}
\newcommand{\T}{\mathbb{T}}
\newcommand{\R}{\mathbb{R}}
\newcommand{\N}{\mathbb{N}}
\newcommand{\Z}{\mathbb{Z}}

\newcommand{\I}{{[0,1)}}
\newcommand{\A}{\mathsf{A}}
\newcommand{\delexp}{{\delta _{\mathrm{e}}}}

\newcommand{\D}{\mathcal{D}}

\newcommand{\nullset}{\mathcal{N}}
\newcommand{\J}{\mathcal{J}}

\newcommand{\dist}{{\operatorname{dist}}}

\newcommand{\graph}{\mathcal{G}}

\newcommand{\ttheta}{{\boldsymbol\vartheta}}

\title[Fractal dimensions and local Hölder exponent spectra]{Fractal dimensions of graph of Weierstrass-type~function and local~Hölder~exponent~spectra}

\author{Atsuya Otani}

\thanks{This work was inspired by workshops in the framework of the DFG Scientific Network ``Skew Product Dynamics and Multifractal Analysis'' organised by Tobias Oertel-J\"ager. The research was funded by the German RAesearch Foundation (DFG Ke 514/8-1).}
\date{\today}

\begin{document}
\begin{abstract}
We study several fractal properties of the Weierstrass-type function
\[
	W (x)=\sum _{n=0} ^\infty\lambda (x)\lambda (\tau x)\cdots\lambda (\tau ^{n-1}x)\, g(\tau ^n x),
\]
where $\tau :[0,1)\to[0,1)$ is a cookie cutter map with possibly fractal repeller, and $\lambda$ and $g$ are functions with proper regularity.
In the first part, we determine the box dimension of the graph of $W$ and Hausdorff dimension of its randomised version.
In the second part, the Housdorff spectrum of the local Hölder exponent is characterised in terms of thermodynamic formalisms.
Furthermore, in the randomised case, a novel formula for the lifted Hausdorff spectrum on the graph is provided.
\end{abstract}
\maketitle


\section{Introduction}

Let $\T:=\I$ be the interval, topologically identified with the 1-dimensional unit torus $\R/\Z$, and $(I _i) _{i\in\Sigma _\ell}$ be disjoint subintervals of $\T$, where $\Sigma _\ell =\{0,\ldots ,\ell -1\}$ for some $\ell\in\N$.
A map $\tau :\T\to\T $ is called a cookie cutter if $\tau(x)=0$ for $x\in\T\setminus\bigcup _{i\in\Sigma _\ell }I _i$, and each restriction $\tau _{|I _i ^\circ}:I _i^\circ\to (0,1)$ is a $ C ^{1+\alpha} $-diffemomorphism. In this note, we study several fractal geometrical structures of the deterministic as well as the randomised Weierstrass-type function $W_\ttheta :\T\to\T$,
\begin{equation}\label{eq:W_def}
	W_\ttheta(x)=\sum _{n=0} ^\infty\lambda (x)\lambda (\tau x)\cdots\lambda (\tau ^{n-1}x)\, g(\tau ^n x +\vartheta _n),
\end{equation}
where $\lambda :\T\to(0 ,1)$ and $g:\T\to\R$ are continuous maps which are piecewise $C^{1+\alpha}$, i.e. their restrictions to each $I _i^\circ$ are of class $C^{1+\alpha}$. In addition, the variable $\ttheta\in\T^{\N_0}$ is only used as a random sequence.
In the deterministic case, we study $W:=W_{\mathbf{0}}$.
Moreover, we always assume the following partial hyperbolic condition
\begin{equation}\label{eq:partial_hyperbolic}
		\inf _{i\in\Sigma _\ell}\inf _{x\in (I _i)^\circ}\left|\tau '(x)\right|\lambda (x)>1.
\end{equation}
Note that the classical Weierstrass function is given by $\tau (x)=\ell x\bmod 1$ and $g(x)=\cos(2\pi x)$ for some $\ell\in\N$ and $\lambda\in(0,1)$ with $\ell\lambda >1$.

Since all summands of the absolutely convergent sum in \eqref{eq:W_def} are continuous on $\T$, so is $W_\ttheta$.
In fact, $W_\ttheta$ is $\alpha$-Hölder continuous but generally, due to \eqref{eq:partial_hyperbolic}, no better regularity can be expected.
This point will be deeply discussed in terms of a multifractal analysis of local Hölder exponent.

Before proceeding, we introduce a few notation. The intervals $ ( I _i ) _{i\in\Sigma _\ell} $ are called the monotonicity intervals of $\tau $.
Furthermore, the set of singular points and the repeller of $\tau $ are respectively defined as
\begin{equation*}
	\nullset =\bigcup _{n\in\mathbb{N}}\tau ^{-n}\{ 0 , 1\}\quad\mbox{and}\quad	\J =\bigcap _{n\in\N }\overline{\tau ^{-n}(0,1)}.
\end{equation*}
Clearly, $\tau (\nullset )=\tau^{-1}(\nullset )=\nullset$ and $\tau(\J )\subseteq\J$.
When we consider the restricted dynamics $(\J,\tau_{|\J})$, we often write $\tau'$ and $\lambda$ etc. instead of $(\tau_{|\J})'$ and $\lambda _{|\J}$ etc.
This restriction is essential from the point of view of the fractality, since $W_\ttheta$ is 'smooth' outside of $\J$ as shown below.

\begin{lemma}	\label{lem:smooth_part}
If $\J\neq\T$, then the restriction of $W _\ttheta$ to $\T\setminus (\J\cup\nullset)$ is of class $C^{1+\alpha}$.
\end{lemma}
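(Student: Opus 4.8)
The plan is to argue locally: I fix a point $x\in\T\setminus(\J\cup\nullset)$ and exhibit a neighbourhood of $x$ on which $W_\ttheta$ is given by an explicit, manifestly $C^{1+\alpha}$, formula. Since $x\notin\J$, the forward orbit of $x$ leaves $\bigcup_i I_i^\circ$ after finitely many steps, and since $x\notin\nullset$ it does so without any iterate meeting $\{0,1\}$; hence there is a first time $N=N(x)\ge 0$ with $\tau^j x\in I_{i_j}^\circ$ for $j<N$ and with $\tau^N x$ an interior point of $\T\setminus\bigcup_i I_i$, on which $\tau$ is identically $0$. First I would check that $N(\cdot)$ is constant on a suitable neighbourhood $U$ of $x$ and that, after shrinking $U$, each $\tau^j|_U$ with $j\le N$ is $C^{1+\alpha}$ --- being a composition of $j$ of the diffeomorphic branches $\tau_{|I_i^\circ}$ --- with $\tau^j(U)\subseteq I_{i_j}^\circ$ for $j<N$ and $\tau^N(U)$ contained in a single component of $\T\setminus\bigcup_i I_i$; this follows at once from continuity of the $\tau^j$ near $x$ together with openness of the sets involved, and it shows in passing that $\T\setminus(\J\cup\nullset)$ is open.

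The decisive observation is that on $U$ the series defining $W_\ttheta$ truncates. Because $\tau\equiv0$ on $\T\setminus\bigcup_i I_i$, we have $\tau^{n}\equiv0$ on $U$ for all $n\ge N+1$, so for such $n$ the iterate $\tau^n x=:a_n$ is a constant independent of $x\in U$; writing $C_n(x):=\lambda(x)\lambda(\tau x)\cdots\lambda(\tau^{n-1}x)$, it follows that $C_n(x)=C_{N+1}(x)\prod_{k=N+1}^{n-1}\lambda(a_k)$ for $n\ge N+1$. Hence
\[
	\sum_{n\ge N+1}C_n(x)\,g(\tau^n x+\vartheta_n)=C_{N+1}(x)\cdot S,\qquad S:=\sum_{n\ge N+1}\Bigl(\textstyle\prod_{k=N+1}^{n-1}\lambda(a_k)\Bigr)\,g(a_n+\vartheta_n),
\]
where the series $S$ converges absolutely (since $\sup_\T\lambda<1$ and $g$ is bounded) and --- crucially --- does not depend on $x$. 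Thus on $U$
\[
	W_\ttheta=\sum_{n=0}^{N}C_n\cdot\bigl(g\circ(\tau^n+\vartheta_n)\bigr)+S\cdot C_{N+1}.
\]

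It then remains to see that the right-hand side is $C^{1+\alpha}$ on $U$. Each $\tau^k|_U$ with $k\le N$ is $C^{1+\alpha}$ by the first paragraph, and since $\lambda$ and $g$ are piecewise $C^{1+\alpha}$ --- in particular $C^{1+\alpha}$ on each $I_i^\circ$ and on each component of $\T\setminus\bigcup_i I_i$ --- every factor $\lambda\circ\tau^k$ ($0\le k\le N$) and every summand $g\circ(\tau^n+\vartheta_n)$ ($0\le n\le N$) is a composition of $C^{1+\alpha}$ maps, hence $C^{1+\alpha}$ on $U$; finite products and finite sums of such maps are again $C^{1+\alpha}$, so the whole expression is. As $C^{1+\alpha}$-regularity is a local property and $x$ was arbitrary, the lemma follows. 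The genuinely delicate ingredients are the description of $\T\setminus(\J\cup\nullset)$ in the first paragraph --- especially the local constancy of the escape time $N$ and the fact that $\tau^N x$ lands in the interior of $\T\setminus\bigcup_i I_i$ --- together with the collapse of the infinite tail to a constant multiple of the single smooth factor $C_{N+1}$; the remaining regularity bookkeeping is routine, using only that the composition of an $\alpha$-Hölder function with a locally Lipschitz one, and the product of two bounded $\alpha$-Hölder functions, are $\alpha$-Hölder (all constants finite because $U$ may be taken relatively compact). The degenerate case $N=0$, where $x$ already lies in $\T\setminus\bigcup_i I_i$, is handled identically with the convention $C_0\equiv1$.
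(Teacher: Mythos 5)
Your proof is correct and takes essentially the same route as the paper: you use the escape from $\J$ to truncate the series on a neighbourhood of $x$ (the infinite tail collapsing to a constant multiple of the finite product $\lambda(u)\cdots\lambda(\tau^{N}u)$), and $x\notin\nullset$ to ensure that the finitely many iterates $\tau,\ldots,\tau^{N}$ occurring in the truncated expression are $C^{1+\alpha}$ near $x$. The only difference is presentational: the paper obtains the neighbourhood at once as $\T\setminus\overline{\tau^{-n}(0,1)}$ from the definition of $\J$, whereas you reconstruct it via the local constancy of the escape time $N$; the regularity bookkeeping is then identical.
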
\label{lem}

\begin{proof}
Let $x\in\T\setminus (\J\cup\nullset)$.
Then, we have $x\in U:=\T\setminus\overline{\tau ^{-n}(0,1)}$ for some $n\in\N$.
Observe that $U$ is a neighbourhood of $x$ such that
\[
	W _\ttheta (u)=\sum _{j=0} ^{n-1}\lambda (u)\lambda (\tau u)\cdots\lambda (\tau ^{j-1} u)g(\tau ^j u+\theta _j)+\lambda (u)\cdots\lambda (\tau ^n u)\sum _{k=0} ^\infty (\lambda (0))^k g(\theta _{k+n}) 
\]
for all $u\in U$.
Moreover, as $x\not\in\nullset$, there is a sub-neighbourhood $x\in V\subseteq U$ such that the restrictions of $\tau ,\ldots ,\tau ^n$ to $V$ are of class $C ^{1+\alpha}$.
Thus $W _\ttheta$ is continuously differentiable in $x$.
\end{proof}

Hence we are only interested in the regularity of $W_\ttheta$ over $\J$.
The following important dichotomy will be shown in a later section.
For the classical Weierstrass function, the nowhere differentiability was proved by G. H. Hardy in \cite{Hardy16}.

\begin{lemma}	\label{lem:degenerate}
$W$ is nowhere differentiable on $\J$ or is of class $C^{1+\alpha}(\T )$.
\end{lemma}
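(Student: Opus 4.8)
The plan is to show that the set of points of $\J$ where $W$ is differentiable is either empty or all of $\T$, the key being a self-similarity (renormalisation) argument exploiting the functional equation satisfied by $W$. First I would record the basic functional equation: for $x\in I_i^\circ$, iterating the definition once gives
\[
	W(x)=g(x)+\lambda(x)\,W(\tau x),
\]
so that on each monotonicity interval $W$ is expressed through a smooth transformation of $W\circ\tau$. Differentiating formally, at a point $x\in\J$ where $W$ is differentiable one obtains $W'(x)=g'(x)+\lambda'(x)W(\tau x)+\lambda(x)\tau'(x)W'(\tau x)$, which shows that differentiability \emph{propagates forward} along the orbit: if $W$ is differentiable at $x$, it is differentiable at $\tau x$, and $|W'(\tau x)|$ is controlled by $|W'(x)|$ via the factor $(\lambda(x)|\tau'(x)|)^{-1}<1$ coming from \eqref{eq:partial_hyperbolic}.

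Next I would turn this around to propagate differentiability \emph{backward} and thereby spread it over a dense set. Since $\tau$ is a cookie cutter, each inverse branch $\tau_i^{-1}$ is a $C^{1+\alpha}$ contraction on its image, so the functional equation written as $W=g+\lambda\cdot(W\circ\tau)$ restricted to $I_i^\circ$ lets one transfer differentiability of $W$ at $y=\tau x$ to differentiability at every preimage $x\in\tau^{-1}\{y\}\cap\J$. Combined with the forward propagation, this means: if $W$ is differentiable at even one point $x_0\in\J$, then it is differentiable on the entire grand orbit of $x_0$ under $\tau$, which — because $\tau_{|\J}$ is topologically a full one-sided shift on $\Sigma_\ell$ and hence the grand orbit of any point is dense in $\J$ — gives a dense subset of $\J$ on which $W'$ exists and is, by the recursive formula together with the contraction estimate, uniformly bounded.

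Then comes the step I expect to be the main obstacle: upgrading "differentiable on a dense set with bounded derivative" to "$C^{1+\alpha}$ everywhere". Here I would argue that the recursive formula $W'(x)=g'(x)+\lambda'(x)W(\tau x)+\lambda(x)\tau'(x)W'(\tau x)$ can be solved by infinite iteration, $W'(x)=\sum_{n\ge 0}\Big(\prod_{k<n}\lambda(\tau^k x)\tau'(\tau^k x)\Big)^{-1}\!\big(g'(\tau^n x)+\lambda'(\tau^n x)W(\tau^{n+1}x)\big)$ — wait, one must check the signs and the indexing carefully, but the point is that \eqref{eq:partial_hyperbolic} makes the geometric weights summable, so this series defines a genuine continuous (indeed piecewise $C^\alpha$, hence $C^\alpha$ by continuity of $W$) candidate $h$ for $W'$ on all of $\J$; the values of $h$ agree with $W'$ on the dense set constructed above. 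To conclude that $W$ is actually differentiable everywhere on $\T$ with $W'=h$ on $\J$ and $W'$ given by the smooth expression of Lemma \ref{lem:smooth_part} off $\J$, I would combine the mean value theorem on each monotonicity interval with the density of points where differentiability is already known, using that $W$ is continuous globally and $h$ is continuous; the Hölder control on $g',\lambda',\tau'$ then promotes this to $C^{1+\alpha}(\T)$. The delicate point throughout is handling the boundary behaviour at $\nullset$ and across the gaps $\T\setminus\bigcup_i I_i$ — there $\tau\equiv 0$ and the orbit structure degenerates — so I would treat the contribution of those branches separately, as in the proof of Lemma \ref{lem:smooth_part}, showing it only adds a $C^{1+\alpha}$ term and does not obstruct the dichotomy.
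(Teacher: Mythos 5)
There is a genuine gap, and it sits exactly at the step you yourself flag as the main obstacle. Your plan hinges on producing a continuous candidate derivative $h$ by solving the recursion $W'(x)=g'(x)+\lambda'(x)W(\tau x)+\lambda(x)\tau'(x)W'(\tau x)$ by infinite forward iteration. But iterating this identity gives, after $N$ steps,
\[
	W'(x)=\sum_{n=0}^{N-1}\Bigl(\prod_{k=0}^{n-1}\lambda(\tau^kx)\,\tau'(\tau^kx)\Bigr)\bigl(g'(\tau^nx)+\lambda'(\tau^nx)W(\tau^{n+1}x)\bigr)+\Bigl(\prod_{k=0}^{N-1}\lambda(\tau^kx)\,\tau'(\tau^kx)\Bigr)W'(\tau^Nx),
\]
i.e.\ the weights are the products $\prod_{k<n}(\lambda\,|\tau'|)(\tau^kx)$ themselves, not their inverses, and by \eqref{eq:partial_hyperbolic} they grow at least like $\Delta^n$ with $\Delta>1$. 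So partial hyperbolicity works \emph{against} you: the series you propose (with the inverse weights) converges but does not solve the recursion, while the genuine formal solution diverges. There is therefore no continuous candidate $h$, which is precisely why nowhere differentiability is the expected generic behaviour; differentiability can only occur through cancellations (the degenerate case), and your construction cannot detect them. Two further steps are also unsupported: backward propagation multiplies the derivative bound by $|\lambda\tau'|>1$ per step, so $W'$ is \emph{not} uniformly bounded on the grand orbit (the bound grows geometrically in the number of backward steps); and even granted a dense set of differentiability points with bounded derivative, differentiability everywhere does not follow without additional input — this is exactly the implication your sketch leaves unproved.

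For comparison, the paper's proof runs in the opposite direction: assume $W$ is not $C^{1+\alpha}$; then, by the argument of \cite{Bedford89} at fixed points of $\tau$, $W$ is not Lipschitz, hence non-degenerate; non-degeneracy feeds into the quantitative oscillation bound of Lemma \ref{lem:Bedford_weak} (second item, proved via the graph-transform estimates of Section \ref{sec:proof_Bed_weak}), giving $c\,\lambda^n(x)\leqslant\sup_{v\in\J\cap I_n(x)}|W(x)-W(v)|$ for \emph{every} $x\in\J$ and $n$, while $|I_n(x)|\leqslant D/|(\tau^n)'(x)|$ by Lemma \ref{lem:tau_distortion}; the difference quotients then blow up like $\Delta^n$ at every point of $\J$. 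If you want to salvage your functional-equation idea, you would need an argument of this uniform, quantitative type (or Bedford's cohomological characterisation of the degenerate case), not a pointwise propagation of differentiability.
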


The above lemma was essentially shown in \cite{Bedford89}, where the author referred to the latter case as the degenerate case. 
For some practical reasons, in this note, we use a slightly weaker definition: Given $\ttheta$, $W_\ttheta$ is {\it degenerate} if it is Lipschitz continuous.

Our study covers two main topics. One of them concerns the Housdorff and box dimension of the graph $ \graph W_\ttheta =\{(x,W_\ttheta (x)):x\in\T\} $.
More generally, for any function $\phi :D\to\R$, we define $\graph \phi = \{ (x,\phi(x)) : x\in D\}$.
Theorem \ref{thm:dim} gives the box dimension for $W$, while Theorem \ref{thm:graph_dim_random} does the Hausdorff dimension for the randomised function $W_\ttheta$.
In the case $\J =\T $ and $\tau ' > 0 $, the box dimension of the graph of $ W $ was proved in \cite{Bedford89}, while its Hausdorff dimension in the classical case is proved in \cite{Shen15}, \cite{Barani14} and \cite{Keller2017}.
In addition, the randomised function $ W _\ttheta $ was studied in \cite{Hunt98} and \cite{Moss12}.

The other topic is the local Hölder exponent, that is defined by
\begin{equation}	\label{eq:def_holder}
	\hol _{W_\ttheta}(x)=\sup\left\{\alpha\in(0,1):\inf _{r>0}\sup _{d(x,u)<r}\frac{|W_\ttheta (x)-W_\ttheta (u)|}{d(x,u)^\alpha}<\infty\right\},
\end{equation}
where $d(x,u):=\max\{|x-u|,1-|x-u|\}$ is the torus metric.
It turns out that these values are as a function on $\J$ highly complex unless it is constant, in the multifractal point of view.
To see this, we study the level set
\[
	E _{\ttheta,\alpha} :=\left\{ x\in \J :\hol _{W _\ttheta} (x) =\alpha\right\}
\]
in terms of the Hausdorff spectrum.
For simplicity, let $ E _\alpha := E _{\mathbf{0} ,\alpha} $ for $\mathbf{0} = ( 0 , 0,\ldots ) $.
In particular, Theorems \ref{thm:dimE} and \ref{thm:random_spectrum} provide some formulas for the spectra
\[
	\alpha\mapsto\dim _H E _\alpha\quad\mbox{and}\quad\alpha\mapsto\dim _H\left(\graph W _\ttheta\cap (E _{\ttheta ,\alpha}\times\R)\right) .
\]
Note that the first spectrum for a similar complex function was studied in \cite{JS15}.
\section{Main results}

Let $ s _1 , s _2\in\R $ be the unique zeros of the Bowen equations
\begin{equation}
	P_\J ((1-s _1)\log |\tau '|+\log\lambda )=0\quad\mbox{and}\quad	P_\J(s _2\log\lambda )=0,	\label{eq:s12}
\end{equation}
where $P_\J(\cdot)$ denotes the topological pressure on $(\J ,\tau _{|\J})$.

The next theorem generalises one of the major results in \cite{Bedford89}.

\begin{theorem}	\label{thm:dim}
Suppose that $ W $ is non-degenerate. Then, we have
\[
	\dim _B\graph W_{|\J}=s _1\quad\mbox{and}\quad\dim _H\graph W_{|\J}\leqslant\min\{s _1,s _2\}.
\]
In addition, if $\J\neq\T$, then $\dim_B\graph W_{|\T\setminus\J}=\dim_H\graph W_{|\T\setminus\J}=1$.
\end{theorem}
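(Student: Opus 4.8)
The plan is to handle the two claims of Theorem~\ref{thm:dim} by rather different means. The box-dimension formula $\dim_B\graph W_{|\J}=s_1$ is essentially a covering argument driven by the oscillation of $W$ over dynamical cylinders, so I would first set up the symbolic coding of $\J$: for a word $i_0\cdots i_{n-1}\in\Sigma_\ell^n$ whose cylinder $J_{i_0\cdots i_{n-1}}$ is nonempty, the bounded-distortion property of the $C^{1+\alpha}$ cookie-cutter gives $|J_{i_0\cdots i_{n-1}}|\asymp |(\tau^n)'|^{-1}$ on that cylinder, and the cocycle $\Lambda_n(x):=\lambda(x)\cdots\lambda(\tau^{n-1}x)$ is likewise roughly constant on it. The key geometric estimate is that the oscillation of $W$ over such a cylinder is comparable to $\Lambda_n$ times the oscillation of $W$ at scale one, \emph{provided} $W$ is non-degenerate — this is where non-degeneracy enters, ruling out cancellation in the tail so that $\mathrm{osc}_{J_{i_0\cdots i_{n-1}}}W\asymp\Lambda_n$. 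Granting this, the number of boxes of side $\delta$ needed to cover $\graph W$ over one cylinder of length $\asymp\delta$ is $\asymp 1+\Lambda_n/\delta$, and summing over all cylinders at the appropriate generation $n=n(\delta)$ and taking $\delta\to 0$ converts the box-counting sum into a pressure: the exponent is precisely the $s$ for which $\sum |(\tau^n)'|^{-s}\Lambda_n^{\,s-1}\asymp 1$, i.e. $P_\J((1-s)\log|\tau'|+\log\lambda)=0$, which is the definition of $s_1$. The upper and lower box dimensions coincide because the pressure function is continuous and strictly monotone, so the generation $n(\delta)$ is well-controlled; I would phrase the summation cleanly using the variational principle / submultiplicativity of the relevant partition functions rather than grinding through the geometric series.

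For the Hausdorff bound $\dim_H\graph W_{|\J}\le\min\{s_1,s_2\}$, the inequality $\dim_H\le\dim_B=s_1$ is automatic, so the real content is $\dim_H\graph W_{|\J}\le s_2$. Here I would build an efficient cover adapted to the \emph{cocycle} $\Lambda_n$ rather than to the metric scale: cover $\graph W$ over each cylinder $J_{i_0\cdots i_{n-1}}$ by a single ball whose radius is comparable to $\max\{|J_{i_0\cdots i_{n-1}}|,\Lambda_n\}$, and, because of partial hyperbolicity~\eqref{eq:partial_hyperbolic} we have $|(\tau^n)'|\lambda^{(n)}>1$ along the cylinder, hence $|J_{i_0\cdots i_{n-1}}|\lesssim\Lambda_n$ and the radius is $\asymp\Lambda_n$. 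Then $\sum_{\text{cylinders}}\Lambda_n^{\,s}$ is exactly the partition function for the potential $s\log\lambda$, whose pressure vanishes at $s=s_2$; so for $s>s_2$ these sums tend to $0$, giving the Hausdorff-measure bound. I should check that using balls of radius $\Lambda_n$ actually covers the graph, i.e. that $\mathrm{osc}_{J_{i_0\cdots i_{n-1}}}W\lesssim\Lambda_n$ — but this is the easy (upper) half of the oscillation estimate and needs no non-degeneracy, only $\alpha$-Hölder continuity of $W$ together with boundedness of $g$ and $\lambda<1$.

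The last statement, about $\graph W_{|\T\setminus\J}$ when $\J\neq\T$, is immediate from Lemma~\ref{lem:smooth_part}: off $\J\cup\nullset$ the function $W$ is $C^{1+\alpha}$, hence locally Lipschitz, so the graph over any compact piece of $\T\setminus(\J\cup\nullset)$ is a Lipschitz graph and has Hausdorff and box dimension exactly $1$; since $\nullset$ is countable it contributes nothing, and $\T\setminus\J$ contains a nonempty open interval so the dimension is at least $1$. One technical point worth a sentence is that $\T\setminus\J$ is a countable union of intervals, so for the box dimension one restricts to a single subinterval (or an exhaustion) to avoid the trivial inflation of box dimension by non-compactness; the Hausdorff dimension is countably stable and causes no trouble.

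I expect the genuine obstacle to be the lower oscillation bound $\mathrm{osc}_{J_{i_0\cdots i_{n-1}}}W\gtrsim\Lambda_n$ in the non-degenerate case, with a uniform constant independent of the cylinder. The natural route is: split $W=W_{<n}+\Lambda_n\cdot(W_\sigma\circ\tau^n)$ where $W_{<n}$ is the $C^{1+\alpha}$ partial sum (uniformly Lipschitz with a bound independent of $n$, since $\|\Lambda_k\|_\infty$ decays geometrically) and $W_\sigma$ is a Weierstrass-type function for the shifted data; over a cylinder of length $\delta\asymp\Lambda_n\cdot\text{const}$, the smooth part contributes oscillation $O(\delta)=O(\Lambda_n)$ which is \emph{negligible of the right order} only if the remaining term has oscillation $\gtrsim\Lambda_n$ with a strictly larger constant, and that in turn reduces to a uniform lower bound on $\mathrm{osc}_{(0,1)}W_\sigma$ over the (compact, by bounded distortion) family of shifted systems — equivalently, to the fact that non-degeneracy is an open/robust condition. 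Making this uniformity precise, presumably via a compactness argument on the space of cookie-cutter data together with Lemma~\ref{lem:degenerate}'s dichotomy, is the step I would allocate the most care to.
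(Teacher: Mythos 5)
Your upper-bound arguments are fine, and your route to $\dim_H\graph W_{|\J}\leqslant s_2$ is in fact a legitimate shortcut that differs from the paper: you cover the graph over each $n$-cylinder by a single ball of radius $\asymp\lambda^n(x)$ (partial hyperbolicity \eqref{eq:partial_hyperbolic} gives $|I_n(x)|\leqslant D\lambda^n(x)$, and the oscillation bound needs no non-degeneracy), and then $\sum_{\mathbf i}(\lambda^n)^s\to 0$ for $s>s_2$ because $P_\J(s\log\lambda)<0$; the paper instead goes through the local Hölder exponent, the sub-level sets of $\tilde h$ and Jin's inequality (Lemma \ref{lem:SJ} and the sets $E^<_\alpha$ in Lemma \ref{lem:box_hausdorff_upper}), which is heavier but produces intermediate estimates reused later. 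Your handling of $\T\setminus\J$ is also consistent with what the paper leaves implicit, and you correctly sense that the box-dimension part of that claim needs care (upper box dimension is closure-invariant, and $\T\setminus\J$ is dense when $\J$ is a Cantor set, so the literal statement requires an interpretation such as the exhaustion you propose).

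The genuine gap is exactly the step you postponed: the uniform lower oscillation bound $\sup_{v\in\J\cap I_n(x)}|W(x)-W(v)|\geqslant c\,\lambda^n(x)$ (second item of Lemma \ref{lem:Bedford_weak}), and the route you sketch for it would not close. First, the partial sum $W_{<n}$ is \emph{not} uniformly Lipschitz on $I_n(x)$ with a constant independent of $n$: differentiating $\lambda^j(u)g(\tau^j u)$ produces factors $\lambda^j(u)|(\tau^j)'(u)|\geqslant\Delta^j$, so on the cylinder its Lipschitz constant is of order $\lambda^n(x)/|I_n(x)|$ and its oscillation over $I_n(x)$ is of the \emph{same} order $\lambda^n(x)$ as the term you want to bound from below — it is not negligible by order of magnitude. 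Consequently you would need the tail's oscillation constant to strictly dominate the smooth part's constant, and non-degeneracy (mere failure of Lipschitz continuity) gives no such quantitative comparison; moreover, in the deterministic case the ``family of shifted systems'' is the single function $W$ itself, so compactness/robustness buys nothing — the problem is a comparison of two constants for one function, not uniformity over a family. The paper's mechanism is different and is the essential idea: measure the oscillation \emph{in excess of a Lipschitz bound}, $a_L:=\sup_{u,v\in\J}\bigl(|W(u)-W(v)|-L|u-v|\bigr)$, and iterate the skew-product contractions $F_{\vartheta,i}$ (Lemmas \ref{lem:Bed_estlem_1}--\ref{lem:Bed_estlem_3}) to show that the functional $L'|C|_{\J,H}-L''|C|_W$ reproduces itself on cylinders with factor $\lambda^n$: the width term carries coefficient $1/|\tau'|<\lambda$, so after choosing $L$ large (via \eqref{eq:partial_hyperbolic}) the Lipschitz-size contributions are absorbed by the subtracted $L|u-v|$, and non-degeneracy then supplies precisely the seed $a_L(\mathbf 0)>0$. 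Without this (or an equivalent) device your lower bound $\dim_B\graph W_{|\J}\geqslant s_1$ does not go through; and even granting it, for the box-count lower bound you should work with Moran covers or a Birkhoff-typical set of cylinders (as the paper does with $\nu_1$ and $D_{N_0}$ in Lemma \ref{lem:box_lower}) rather than a fixed generation $n(\delta)$, since generation-$n$ cylinders have exponentially disparate lengths.
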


As an immediate corollary, we can see that the Hausdorff dimension of $W_{|\J}$ may be strictly smaller than its box dimension.

\begin{corollary}
Suppose that $ W $ is non-degenerate. If $\dim _B\graph W _{|\J}<1$, then $\dim _H\graph W_{|\J}<\dim _B\graph W_{|\J}$.
\end{corollary}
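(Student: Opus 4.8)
The plan is to reduce the statement entirely to Theorem~\ref{thm:dim} together with a comparison of the two Bowen equations in \eqref{eq:s12}. Since $W$ is non-degenerate, Theorem~\ref{thm:dim} gives $\dim_B\graph W_{|\J}=s_1$ and $\dim_H\graph W_{|\J}\le\min\{s_1,s_2\}$, so the hypothesis $\dim_B\graph W_{|\J}<1$ reads $s_1<1$, and it suffices to prove the strict inequality $s_2<s_1$: then $\dim_H\graph W_{|\J}\le\min\{s_1,s_2\}=s_2<s_1=\dim_B\graph W_{|\J}$, which is the claim.

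First I would record the two properties of the topological pressure $P_\J$ that are used, both immediate from the variational principle: $P_\J$ is monotone, i.e.\ $\varphi\le\psi$ on $\J$ implies $P_\J(\varphi)\le P_\J(\psi)$, and $P_\J(\varphi-c)=P_\J(\varphi)-c$ for every constant $c$. I would also note that $s\mapsto P_\J(s\log\lambda)$ is non-increasing, because $\lambda$ is continuous with values in $(0,1)$ and $\J$ is compact, hence $\log\lambda<0$ on $\J$.

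The heart of the argument is the pointwise identity
\[
	\big((1-s_1)\log|\tau'|+\log\lambda\big)-s_1\log\lambda=(1-s_1)\big(\log|\tau'|+\log\lambda\big)
\]
on $\J$. Since $s_1<1$ the factor $1-s_1$ is positive, and the partial hyperbolicity assumption \eqref{eq:partial_hyperbolic} gives $\log|\tau'|+\log\lambda\ge\delta_0$ on $\J$ for the positive constant $\delta_0:=\log\big(\inf_i\inf_{x\in I_i^\circ}|\tau'(x)|\lambda(x)\big)$; hence the right-hand side is bounded below by $(1-s_1)\delta_0>0$. Applying the two properties of $P_\J$ recorded above, together with the defining relation $P_\J\big((1-s_1)\log|\tau'|+\log\lambda\big)=0$, yields
\[
	P_\J(s_1\log\lambda)\le P_\J\big((1-s_1)\log|\tau'|+\log\lambda\big)-(1-s_1)\delta_0=-(1-s_1)\delta_0<0 .
\]
Since $P_\J(s_2\log\lambda)=0$ while $P_\J(s_1\log\lambda)<0$ and $s\mapsto P_\J(s\log\lambda)$ is non-increasing, we cannot have $s_1\le s_2$ (that would force $P_\J(s_1\log\lambda)\ge P_\J(s_2\log\lambda)$); therefore $s_2<s_1$, and the corollary follows.

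I do not expect a genuine obstacle. The only point requiring a line of care is that the strict inequality must be extracted without assuming strict monotonicity of the pressure function, which is why the quantitative bound $P_\J(s_1\log\lambda)\le-(1-s_1)\delta_0<0$ is recorded rather than merely $P_\J(s_1\log\lambda)\le 0$. A minor bookkeeping matter is that $\log|\tau'|$ and $\log\lambda$ must be admissible potentials on $\J$ for $P_\J$ to make sense, but this is already built into the formulation of \eqref{eq:s12}.
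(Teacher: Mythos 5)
Your argument is correct and is exactly the pressure comparison the paper leaves implicit when it calls this an ``immediate corollary'' of Theorem~\ref{thm:dim}: from $s_1<1$, the identity $s_1\log\lambda=\bigl((1-s_1)\log|\tau'|+\log\lambda\bigr)-(1-s_1)\bigl(\log|\tau'|+\log\lambda\bigr)$ and \eqref{eq:partial_hyperbolic} give $P_\J(s_1\log\lambda)\leqslant-(1-s_1)\log\Delta<0=P_\J(s_2\log\lambda)$, so monotonicity of $s\mapsto P_\J(s\log\lambda)$ forces $s_2<s_1$ and hence $\dim_H\graph W_{|\J}\leqslant s_2<s_1=\dim_B\graph W_{|\J}$. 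The only cosmetic point is to rename your constant $\delta_0$, since the paper already uses $\delta_0$ for a different quantity in Lemma~\ref{lem:tau_distortion}.
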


Now, a natural question is whether $\dim _H\graph W_{|\J}=\min\{s _1,s _2\}$.
We will show this identity for a randomised function $W_\ttheta$ for a specific class of $g$.
Following the notation of \cite{Moss12}, we say that a function $ g :\T\to\R $ satisfies the {\it critical point hypothesis} if $ g\in C ^\infty (\T ) $ and there is some number $ r _0\in\N $ such that the orders of critical points of the functions $ g(a +\cdot ) - c g $ are strictly less than $ r _0 $ for any $ a\in (0,1) $ and $ c\in\mathbb{R} $.
For example, any non-vanishing trigonometric polynomial satisfies this condition.

\begin{theorem}	\label{thm:graph_dim_random}
Suppose that $\ttheta\in\T ^{\N _0} $ is an independent and identically uniformly distributed random sequence on $\T$ and that $\tau _{|\J}$ is expansive.\footnote{The map $\tau _{|\J}:\J\to\J$ is called expansive if $\inf \{ \sup _{n\in\N _0} d(\tau ^n x,\tau ^n v) : x,v\in\J \mbox{ with } x\neq v  \} >0 $.}
If $ g $ satisfies the critical point hypothesis, then almost surely
\[
	\dim _B\graph W _\ttheta = s _1 \quad\mbox{and}\quad\dim _H\graph W _\ttheta =\min\{ s _1 , s _2\}.
\]
\end{theorem}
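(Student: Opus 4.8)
The plan is to split the argument into the box-dimension part and the Hausdorff-dimension part. The box-dimension equality $\dim_B\graph W_\ttheta = s_1$ should follow exactly as in the deterministic case: the upper bound argument in Theorem \ref{thm:dim} is essentially deterministic (it only uses the telescoping structure of \eqref{eq:W_def}, the partial hyperbolicity \eqref{eq:partial_hyperbolic}, and the bounded distortion of the cookie cutter), and the shift by $\vartheta_n$ does not affect the oscillation estimates that produce the covering count. One should check that non-degeneracy of $W_\ttheta$ holds almost surely under the critical point hypothesis — this is where one invokes Lemma \ref{lem:degenerate} together with the observation that a generic shift cannot make $W_\ttheta$ Lipschitz; alternatively the lower bound for $\dim_B$ comes for free from the Hausdorff lower bound below. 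So the real content is the identity $\dim_H\graph W_\ttheta = \min\{s_1, s_2\}$, and since Theorem \ref{thm:dim} already gives $\dim_H\graph W_\ttheta \leqslant \min\{s_1,s_2\}$ (the deterministic upper bound proof again being insensitive to the deterministic shift, applied $\ttheta$-pointwise), it remains to prove the almost sure lower bound $\dim_H\graph W_\ttheta \geqslant \min\{s_1, s_2\}$.

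For the lower bound I would use the potential-theoretic (energy) method combined with a Fubini/randomisation argument in the spirit of Hunt \cite{Hunt98} and Moss \cite{Moss12}. Fix $s < \min\{s_1, s_2\}$. Construct the natural $\tau_{|\J}$-invariant measure $\mu$ on $\J$ whose pushforward onto the graph is the candidate measure: concretely, let $\mu$ be the equilibrium state balancing $\log|\tau'|$ and $\log\lambda$ so that its dimension and the local scaling of $W_\ttheta$ match, and let $\mu_\ttheta = (\mathrm{id}, W_\ttheta)_*\mu$ on $\graph W_\ttheta$. The goal is to show
\[
	\mathbb{E}_\ttheta \iint \frac{d\mu_\ttheta(p)\, d\mu_\ttheta(q)}{|p-q|^s} < \infty,
\]
which by Frostman forces $\dim_H \graph W_\ttheta \geqslant s$ for almost every $\ttheta$. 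Expanding the double integral, the horizontal distance $|x-y|$ is handled by the dimension of $\mu$ on $\J$ (this is where $s_1$, i.e.\ the Bowen equation for $(1-s)\log|\tau'| + \log\lambda$, enters), so the crux is controlling the vertical fluctuation: one must bound $\mathbb{E}_\ttheta\big[\min\{1, |x-y|^{-s}\,|W_\ttheta(x)-W_\ttheta(y)|^{-(s-1)}\}\big]$ or, more usefully, show that for fixed $x\neq y$ the random variable $W_\ttheta(x) - W_\ttheta(y)$ has a bounded (or suitably integrable) density on a scale comparable to the relevant increment. This is precisely the role of the critical point hypothesis: it guarantees, via a stationary-phase / van der Corput type estimate on the characteristic function of $W_\ttheta(x)-W_\ttheta(y)$, that no conspiracy of the $g(\tau^n\cdot + \vartheta_n)$ terms can concentrate the distribution, uniformly in $x,y$. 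The expansiveness of $\tau_{|\J}$ is needed so that for $x\neq y$ the orbits separate and sufficiently many independent coordinates $\vartheta_n$ actually contribute to the randomness of the increment.

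I expect the main obstacle to be this decay estimate for the conditional distribution of the increment $W_\ttheta(x) - W_\ttheta(y)$, uniformly over all pairs $x, y \in \J$ and down to arbitrarily small scales. One has to write $W_\ttheta(x) - W_\ttheta(y)$ as a sum over $n$ of terms involving $g(\tau^n x + \vartheta_n)$ and $g(\tau^n y + \vartheta_n)$ weighted by products of $\lambda$'s, identify for each relevant dyadic-type scale a "dominant" index $n$ at which the weight is comparable to the scale and $\tau^n x$, $\tau^n y$ are appropriately separated, and then use the critical point hypothesis to get an oscillatory-integral bound on the Fourier transform in that single variable $\vartheta_n$ — the point being that $r_0$ bounds the order of degeneracy, hence gives a uniform $|\xi|^{-1/r_0}$-type decay that is enough, after integrating, to make the expected energy finite for every $s < \min\{s_1,s_2\}$. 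Matching this decay rate against the value $s_2$ coming from the Bowen equation $P_\J(s\log\lambda)=0$ is the delicate bookkeeping step; the uniformity in $(x,y)$ — in particular near the diagonal and near $\nullset$ where distortion estimates degrade — is where one must be careful, and I would handle it by a stopping-time decomposition of $\J\times\J$ according to the first scale at which the two orbits separate, exactly as in the coding-space arguments of \cite{Moss12}.
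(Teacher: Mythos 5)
Your strategy for the Hausdorff lower bound --- lift a Gibbs measure to the graph, bound the $\ttheta$-expected $s$-energy via Fubini, control the law of the increment $W_\ttheta(x)-W_\ttheta(v)$ through the critical point hypothesis and expansiveness, and decompose $\J\times\J$ according to the first Bowen-ball separation scale --- is exactly the paper's route (Lemma \ref{lem:lift}, proved via the density bound of Lemma \ref{lem:moss} and the energy estimates of Lemmas \ref{lem:lower1} and \ref{lem:lower2}), and you are right that the upper bounds of Lemma \ref{lem:box_hausdorff_upper} are deterministic and apply $\ttheta$-pointwise. The genuine gap is your use of a \emph{single} measure ``balancing'' $\log|\tau'|$ and $\log\lambda$, with $s_2$ expected to emerge from bookkeeping of the $|\xi|^{-1/r_0}$ decay. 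Whenever $s_2<s_1$ this cannot work: for the equilibrium state $\nu_1$ of $(1-s_1)\log|\tau'|+\log\lambda$ the variational principle gives $h_\tau(\nu_1)+s_2\int\log\lambda\,d\nu_1\leqslant P_\J(s_2\log\lambda)=0$, so the lower pointwise dimension of the lifted measure is at most $h_\tau(\nu_1)/(-\int\log\lambda\,d\nu_1)\leqslant s_2$, with equality only if $\nu_1$ happens to coincide with the equilibrium state $\nu_2$ of $s_2\log\lambda$; the expected energy really diverges above that exponent, and the critical point hypothesis cannot repair this, since it only enters the \emph{constant} in the bound $\|h_{x,v}\|_\infty\leqslant C_h/\lambda^n(x)$, not its decay. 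The exponent $s_2$ is produced instead by the Gibbs scaling $\nu_2(I_n(x))\asymp(\lambda^n(x))^{s_2}$ of a second reference measure: the paper computes the exact dimension of the lift of an \emph{arbitrary} Gibbs measure (Lemma \ref{lem:lift}) and then applies it to $\nu_1$ or to $\nu_2$ according to whether $s_1\leqslant s_2$ or $s_1>s_2$. Note also that $s_2<s_1$ forces $s_2<1$ (compare the two pressure functions in \eqref{eq:s12} using \eqref{eq:partial_hyperbolic}), and for $s<1$ the bound $\int d\ttheta\,\bigl(d(x,v)^2+(W_\ttheta(x)-W_\ttheta(v))^2\bigr)^{-s/2}\leqslant K_s\,d(x,v)^{1-s}\|h_{x,v}\|_\infty$ fails because $K_s=\infty$; the paper needs the separate Jensen-inequality argument of Lemma \ref{lem:lower2} in exactly this regime.

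The second gap is the box-dimension lower bound $\underline{\dim}_B\graph W_\ttheta\geqslant s_1$. Your fallback ``it comes for free from the Hausdorff lower bound'' only yields $\min\{s_1,s_2\}$, which is strictly smaller than $s_1$ precisely when $s_2<s_1$, so it does not prove the first identity of the theorem. Your primary route --- a.s.\ non-degeneracy via Lemma \ref{lem:degenerate} plus ``a generic shift cannot make $W_\ttheta$ Lipschitz'' --- is unsubstantiated (Lemma \ref{lem:degenerate} concerns the deterministic $W$), and even granting a.s.\ non-degeneracy it is not sufficient: the deterministic counting argument requires a lower oscillation bound $c\,\lambda^n(x)$ on every cylinder $I_n(x)$, uniformly in $x$, and in the random case the constant necessarily depends on the shifted noise, $c(\sigma^n\ttheta)$, so one must additionally prove $\log c(\sigma^n\ttheta)=o(n)$ almost surely before the Moran-cover count of Lemma \ref{lem:box_lower} goes through. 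This is precisely the content of the third item of Lemma \ref{lem:Bedford_weak}, established through the curve estimates of Lemma \ref{lem:Bed_estlem_2} and the ergodic/telescoping argument of Lemma \ref{lem:Bed_estlem_3}; nothing in your proposal plays this role.
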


In order to describe the spectrum $\alpha\mapsto\dim _H E _\alpha$, we need to introduce further notation from the thermodynamic formalism.
For each $q\in\R$, let $A _q\in\R$ be the number that is uniquely determined by
\begin{equation}	\label{eq:Bowen}
	P _\J(-A _q\log |\tau '|+q\log\lambda )=0.
\end{equation}
According to \cite{Barreira08}, we can define the following quantities:
\begin{itemize}
\item Let $\D$ be the Legendre transform of the map $ q\mapsto A _q $, i.e. $\D (\alpha ):=\sup _{q\in\R} q\alpha + A _q $.
\item Let $\alpha (q) := - A ' _q $.
\item Let $\A := (\alpha _{\min} ,\alpha _{\max} )  $, where $\alpha _{\min}:=\inf _{q\in\R}\alpha (q)$ and $\alpha _{\max}:=\sup _{q\in\R}\alpha (q)$.
\end{itemize}
Recall that, if $\A\neq\emptyset $, the restriction of $\D $ to the interval $\A $ is a strictly concave non-negative analytic function, so $\alpha _c:=\alpha(0)\in\A $ is the unique maximum point (critical point).
Observe that
\[
	\mathcal{D} (\alpha _c ) = 1\quad\mbox{and}\quad\mathcal{D} ' (\alpha _c ) = 0 .
\]
In case $\A =\emptyset$, we define $\alpha _c :=\alpha _{\min} =\alpha _{\max} $.

The Hausdorff spectrum of the local Hölder exponent is characterised as follows.
\begin{theorem}	\label{thm:dimE}
Suppose that $ W $ is non-degenerate. In case $\A =\emptyset $, we have
\[	E _\alpha = \begin{cases}
	\J &\mbox{if }\alpha =\alpha _c\\
	\emptyset  &\mbox{otherwise}
\end{cases} .
\]

In case $\A\neq\emptyset$, we have
\[
	\dim _H E _\alpha =\mathcal{D} (\alpha ) 
\]
for all $\alpha\in\A $.
Moreover, for each $\alpha\in\A $, there is a Gibbs measure $\nu _\alpha $ such that $\nu _\alpha ( E _\alpha ) = 1 $,
\[
	\dim _H\nu _\alpha =\mathcal{D} (\alpha ) =\frac{h _\tau (\nu _\alpha )}{\int\log |\tau ' |\, d\nu _\alpha} \quad\mbox{and}\quad\alpha =\frac{-\int\log\lambda\, d\nu _\alpha}{\int\log |\tau ' |\, d\nu _\alpha}.
\]
\end{theorem}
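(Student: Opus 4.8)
The plan is to reduce the computation of $\hol_{W}(x)$ at a point $x\in\J$ to the behaviour of the Birkhoff-type products appearing in \eqref{eq:W_def}, and then to apply the standard multifractal formalism for these products. First I would establish a pointwise formula: for $x\in\J$ whose orbit never hits $\nullset$, one expects
\[
	\hol_{W}(x)=\liminf_{n\to\infty}\frac{-\log\bigl(\lambda(x)\lambda(\tau x)\cdots\lambda(\tau^{n-1}x)\bigr)}{\log\bigl|(\tau^n)'(x)\bigr|}
	=\liminf_{n\to\infty}\frac{-\sum_{k=0}^{n-1}\log\lambda(\tau^k x)}{\sum_{k=0}^{n-1}\log|\tau'(\tau^k x)|}.
\]
The upper bound $\hol_W(x)\le\alpha$ comes from testing the H\"older quotient along a sequence of points $u$ that are cylinder-neighbours of $x$ at level $n$, using non-degeneracy of $W$ (Lemma \ref{lem:degenerate}) to guarantee that the $n$-th summand $\lambda(x)\cdots\lambda(\tau^{n-1}x)\,g(\tau^n x)$ really does oscillate at the scale $|(\tau^n)'(x)|^{-1}$ rather than cancelling; the tail of the series is controlled by \eqref{eq:partial_hyperbolic}, and the head is $C^{1+\alpha}$ hence contributes a Lipschitz term. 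The matching lower bound follows by splitting at level $n=n(r)$ chosen so that the level-$n$ cylinder around $x$ has size comparable to $r$, estimating the head by its $C^{1}$-norm and the tail by the geometric bound coming from \eqref{eq:partial_hyperbolic}.

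Once this formula is in hand, the level set $E_\alpha$ becomes exactly the set of points where the Birkhoff ratio of the two H\"older potentials $\varphi:=-\log\lambda$ and $\psi:=\log|\tau'|$ has lower limit $\alpha$. The dichotomy in the case $\A=\emptyset$ then follows because $\A=\emptyset$ forces $\alpha(q)$ to be constant in $q$, which by the variational characterisation of the pressure \eqref{eq:Bowen} means $\varphi$ and $\psi$ are cohomologous up to the constant $\alpha_c$; hence the ratio equals $\alpha_c$ at every point of $\J$, giving $E_{\alpha_c}=\J$ and $E_\alpha=\emptyset$ otherwise. In the case $\A\neq\emptyset$, I would invoke the conditional variational principle / multifractal analysis for ratios of Birkhoff averages of H\"older potentials over the cookie-cutter $(\J,\tau_{|\J})$ (this is exactly the setting of \cite{Barreira08}): for $\alpha\in\A$ one selects $q=q(\alpha)$ with $\alpha(q)=\alpha$ and takes $\nu_\alpha$ to be the equilibrium state (Gibbs measure) of the potential $-A_q\log|\tau'|+q\log\lambda$ normalised via \eqref{eq:Bowen}. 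Standard Gibbs/Ruelle theory then yields $\nu_\alpha(E_\alpha)=1$, the two displayed identities for $\alpha$ and for $\dim_H\nu_\alpha=h_\tau(\nu_\alpha)/\int\log|\tau'|\,d\nu_\alpha$ (the latter being the Volume Lemma / Ledrappier--Young formula for conformal repellers), and the evaluation $h_\tau(\nu_\alpha)/\int\log|\tau'|\,d\nu_\alpha=\D(\alpha)$ from the Legendre-transform relation $\D(\alpha)=q\alpha+A_q$ together with $\alpha=-A_q'$. The lower bound $\dim_H E_\alpha\ge\D(\alpha)$ is then immediate from $\nu_\alpha(E_\alpha)=1$ and the mass-distribution principle, and the upper bound $\dim_H E_\alpha\le\D(\alpha)$ follows from a covering argument: cover $E_\alpha$ by level-$n$ cylinders on which the ratio is close to $\alpha$, and sum up using the pressure-zero normalisation, which is the usual upper-bound half of the multifractal formalism.

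The main obstacle I anticipate is the first step, i.e.\ proving the pointwise H\"older formula rigorously rather than heuristically. The subtlety is two-fold. First, one must handle points $x\in\J\cap\nullset$ or points whose forward orbit accumulates on $\nullset$, where $\tau$ fails to be smooth; these form a countable (hence dimension-$0$) set, so they can be discarded for the spectrum, but one must check they do not distort $E_\alpha$ for the relevant $\alpha$. Second, and more seriously, the upper bound on $\hol_W$ requires that the oscillation of the partial sum up to level $n$ at scale $|(\tau^n)'(x)|^{-1}$ is genuinely bounded below, and this is precisely where non-degeneracy must be quantified — one needs a uniform lower bound on an oscillation of $g$ composed with the (uniformly $C^{1+\alpha}$) inverse branches, analogous to the "bounded distortion + non-vanishing oscillation" estimates in \cite{Bedford89}. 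Once that estimate is isolated as a lemma (it is essentially the non-degeneracy input already used for Lemma \ref{lem:degenerate} and Theorem \ref{thm:dim}), the remainder is a fairly mechanical application of thermodynamic formalism on conformal repellers, and the replacement of $\limsup$ by $\liminf$ in the H\"older exponent — as opposed to the usual pointwise-dimension setting where $\liminf$ appears naturally — is absorbed by the fact that the Gibbs measure $\nu_\alpha$ makes the Birkhoff ratio converge $\nu_\alpha$-a.e., so $\liminf$ and $\limsup$ coincide on a full-measure set.
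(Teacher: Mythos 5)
Your overall route is the same as the paper's: first a symbolic formula $\hol_W(x)=\liminf_{n\to\infty}\frac{-\log\lambda^n(x)}{\log|\tau'|^n(x)}$ valid on $\J\setminus\nullset$ (the paper's Lemmas \ref{lem:SJ}, \ref{lem:Bedford_weak}, \ref{lem:hol_symbolic}), then the standard multifractal formalism for quotients of Birkhoff sums (Lemma \ref{lem:wellknown}) together with the Legendre-transform identities for $\nu_\alpha$ and $\D(\alpha)$. For $\A\neq\emptyset$ this works exactly as you describe: $E_\alpha\bigtriangleup S_\alpha\subseteq\nullset$ is countable, so the dimension is unaffected, and $\nu_\alpha(\nullset)=0$ because Gibbs measures are atom-free, so $\nu_\alpha(E_\alpha)=1$ survives. (A minor caveat: your liminf-versus-limit remark only covers the lower bound $\dim_H E_\alpha\geqslant\D(\alpha)$; the upper bound for the \emph{liminf} level sets needs the covering argument at the infinitely many times when the ratio is near $\alpha$, which is what the cited formalism supplies.)

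The genuine gap is the case $\A=\emptyset$. There the claim is a set identity, $E_{\alpha_c}=\J$, not a dimension computation, so the points of $\J\cap\nullset$ cannot be discarded as a countable set. Your cohomology argument does show that the Birkhoff ratio converges to $\alpha_c$ at every point of $\J$, but the identification of $\hol_W$ with that ratio is only established on $\J\setminus\nullset$: at $x\in\J\cap\nullset$ the ball $B_r(x)$ protrudes beyond the cylinder $I_n(x)$ on one side for every $n$, and your head/tail splitting breaks down because $\tau^n$ is not smooth across the cylinder boundary. One direction, $\hol_W(x)\leqslant\alpha_c$, still follows from the oscillation lower bound of Lemma \ref{lem:Bedford_weak} combined with the distortion identity $\lambda^n(x)\asymp|I_n(x)|^{\alpha_c}$ coming from cohomology; but the reverse inequality $\hol_W(x)\geqslant\alpha_c$ at such $x$ requires controlling $W$ on the far side of $x$, either via an adjacent cylinder $I_{m_n}(x_n)$ of comparable length (upper oscillation bound $\overline{C}\lambda^{m_n}(x_n)\asymp|I_n(x)|^{\alpha_c}$) or, when $x$ borders a gap of $\J$, via the $C^1$-smoothness of $W$ off $\J$ (Lemma \ref{lem:smooth_part}) and a Lipschitz bound, using $\alpha_c\leqslant 1$. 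The paper's proof devotes most of its length to exactly this boundary analysis, split into three cases according to whether $0$ and $1$ lie in $\J$; your proposal flags that $\nullset$ ``must be checked'' but provides no mechanism, and ``countable hence dimension zero'' is not a mechanism for a set equality.
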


\begin{remark}
If $\lambda = |\tau ' | ^{-\theta} $ for some constant $\theta\in (0,1) $, then $\A =\emptyset$.
Clearly, then $\alpha _c =\theta $.
In particular, Theorem \ref{thm:dimE} implies \cite[Theorem 1]{Todorov15}, i.e. that the Lebesgue measure of $ E _\theta $ is one.
\end{remark}

We turn to the graph points over $E_\alpha$ for various $\alpha$.
A natural upper bound can be immediately derived from Theorem \ref{thm:dimE}, applying the general formula presented in \cite[Theorem 1]{Jin11}.

\begin{lemma}	\label{lem:Jin}
We have
\[
	\dim _H (\graph W _\ttheta\cap (E _{\ttheta ,\alpha}\times\R))\leqslant\min\left\{\dim _H E _{\ttheta ,\alpha}  + 1 -\alpha ,\frac{\dim _H E _{\ttheta ,\alpha}}{\alpha}\right\}
\]
for all $\alpha\in\R $ and $\ttheta\in\T ^{\N _0} $.
\end{lemma}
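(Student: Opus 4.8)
The plan is to obtain this purely as an instance of the general estimate \cite[Theorem 1]{Jin11}, which bounds the Hausdorff dimension of the part of the graph of a bounded function lying above a level set of its pointwise H\"older exponent. First I would check that $W_\ttheta$ fits that framework: for every fixed $\ttheta\in\T^{\N_0}$ it is continuous on the compact torus $\T$, hence bounded (this was observed right after \eqref{eq:W_def}), and the quantity $\hol_{W_\ttheta}$ in \eqref{eq:def_holder} is exactly the pointwise H\"older exponent entering \cite[Theorem 1]{Jin11}. Since $E_{\ttheta,\alpha}$ is by definition contained in $\{x\in\T:\hol_{W_\ttheta}(x)=\alpha\}$ and $\graph W_\ttheta\cap(E_{\ttheta,\alpha}\times\R)$ is the graph of the restriction $W_\ttheta|_{E_{\ttheta,\alpha}}$, the two competing bounds of that theorem apply verbatim to the set $E_{\ttheta,\alpha}$ and give the claim. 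It matters that one uses the localised form of the statement --- valid for an arbitrary subset on which the pointwise exponent is at least $\alpha$ --- rather than the bound for the full level set, since the latter can be strictly larger (for instance, when $\alpha=1$ it contains $\T\setminus(\J\cup\nullset)$ by Lemma~\ref{lem:smooth_part}, whereas $E_{\ttheta,1}\subseteq\J$).

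For completeness I would also recall the mechanism behind the two bounds, which is what one would reprove if a self-contained argument were wanted. Fix $\beta<\alpha$. Since $\hol_{W_\ttheta}(x)=\alpha$ forces $W_\ttheta$ to be pointwise $\beta$-H\"older at $x$, we have $E_{\ttheta,\alpha}=\bigcup_{k\in\N}A_k$ with
\[
	A_k=\bigl\{x\in E_{\ttheta,\alpha}:|W_\ttheta(x)-W_\ttheta(u)|\leqslant k\, d(x,u)^\beta\ \text{whenever}\ d(x,u)<1/k\bigr\}.
\]
On each $A_k$ two covering strategies are available. First, if $A_k$ is covered by sets $U_i$ of diameters $\delta_i<1/k$ each meeting $A_k$, then the graph over $U_i$ lies in a box of height $\lesssim\delta_i^\beta$ and is covered by $\lesssim\delta_i^{\beta-1}$ squares of side $\delta_i$; summing $\sum_i\delta_i^{\beta-1}\delta_i^{s}=\sum_i\delta_i^{s+\beta-1}$ and optimising in $s$ gives $\dim_H(\graph W_\ttheta\cap(A_k\times\R))\leqslant\dim_HA_k+1-\beta$. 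Second, the graph map $x\mapsto(x,W_\ttheta(x))$ is locally $\beta$-H\"older on $A_k$ (Lipschitz in the first coordinate, $\beta$-H\"older in the second, and $\beta\leqslant1$), and a $\beta$-H\"older image inflates Hausdorff dimension by a factor at most $1/\beta$, so $\dim_H(\graph W_\ttheta\cap(A_k\times\R))\leqslant\dim_HA_k/\beta$. Using $\dim_HA_k\leqslant\dim_HE_{\ttheta,\alpha}$, the countable stability of $\dim_H$, and then letting $\beta\uparrow\alpha$ yields the asserted minimum.

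The only real subtlety --- and the reason this is phrased as a lemma rather than an immediate observation --- is that $\hol_{W_\ttheta}(x)=\alpha$ carries no uniformity in $x$: there is no single H\"older constant valid throughout $E_{\ttheta,\alpha}$, and in general the exponent $\alpha$ itself need not be achieved in the supremum defining $\hol$ at any particular point. This is exactly why the argument must be routed through the countable decomposition $\{A_k\}$ with exponent $\beta<\alpha$ and the limit $\beta\uparrow\alpha$, and why one needs both countable stability of Hausdorff dimension and continuity of the two bounds in $\beta$ to pass to the supremum without loss. All of this bookkeeping, together with the degenerate cases (for instance $\alpha\leqslant0$ or $E_{\ttheta,\alpha}=\emptyset$, where the inequality is trivial), is carried out in full generality in \cite{Jin11}, so no further computation is needed here.
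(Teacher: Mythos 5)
Your proposal matches the paper's own treatment: the lemma is justified there precisely by invoking the general bound of \cite[Theorem 1]{Jin11} for sets on which the pointwise exponent $\hol_{W_\ttheta}$ is (at least) $\alpha$, with no further argument recorded. Your supplementary sketch of the mechanism (decomposition into sets $A_k$ with uniform constants at exponent $\beta<\alpha$, the two covering estimates, countable stability, and $\beta\uparrow\alpha$) is correct but simply fills in what the paper delegates to the citation.
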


The next result on the randomised case is an application of Lemma \ref{lem:lift}, which suggests the canonical representation for the lifted spectrum.

\begin{theorem}	\label{thm:random_spectrum}
Suppose that $\ttheta\in\T ^{\N _0} $ is an independent and identically uniformly distributed random sequence on $\T$ and that $\tau _{|\J}$ is expansive.
Furthermore, suppose $\A\neq\emptyset$ and that $ g $ satisfies the critical point hypothesis.
Then, almost surely
\[
	\dim _H(\graph W _\ttheta\cap (E _{\ttheta ,\alpha}\times\R)) =\min\left\{\D (\alpha ) + 1 -\alpha ,\frac{\D (\alpha )}{\alpha}\right\} 
\]
for all $\alpha\in\A $.
\end{theorem}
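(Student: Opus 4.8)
The strategy is to squeeze $\dim_H(\graph W_\ttheta\cap(E_{\ttheta,\alpha}\times\R))$ between the deterministic upper bound of Lemma~\ref{lem:Jin} and an almost sure lower bound obtained by lifting a Gibbs measure to the graph. For the upper bound, the only missing input is that $\dim_H E_{\ttheta,\alpha}=\D(\alpha)$ holds for all $\alpha\in\A$ on a single almost sure event. This is the randomised analogue of Theorem~\ref{thm:dimE}: the critical point hypothesis forces $W_\ttheta$ to be non-degenerate almost surely, and the proof of Theorem~\ref{thm:dimE} uses $\ttheta$ only through non-degeneracy, since the shifts $+\vartheta_n$ merely translate the arguments of $g$ and leave the pressure functionals defining $A_q$, $\D$, $\A$, $\alpha(q)$ untouched. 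Plugging $\dim_H E_{\ttheta,\alpha}=\D(\alpha)$ into Lemma~\ref{lem:Jin} yields $\dim_H(\graph W_\ttheta\cap(E_{\ttheta,\alpha}\times\R))\leqslant\min\{\D(\alpha)+1-\alpha,\D(\alpha)/\alpha\}$ for every $\alpha\in\A$.

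For the lower bound, fix $\alpha\in\A$ and let $\nu_\alpha$ be the Gibbs measure of Theorem~\ref{thm:dimE} (it does not depend on $\ttheta$, being governed by $-A_q\log|\tau'|+q\log\lambda$), which satisfies $\dim_H\nu_\alpha=\D(\alpha)$ and $\alpha=-\int\log\lambda\,d\nu_\alpha/\int\log|\tau'|\,d\nu_\alpha$. A Fubini argument combined with the non-degeneracy mechanism shows that $\nu_\alpha(E_{\ttheta,\alpha})=1$ almost surely, so the lifted measure $\tilde\nu_{\ttheta,\alpha}:=(\mathrm{id},W_\ttheta)_*\nu_\alpha$ is carried by $\graph W_\ttheta\cap(E_{\ttheta,\alpha}\times\R)$. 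The heart of the matter is a lifting lemma (the paper's Lemma~\ref{lem:lift}) asserting that, under the critical point hypothesis and for iid uniform $\ttheta$, the lift of a Gibbs measure $\nu$ of dimension $\delta$ and Hölder exponent $\alpha=-\int\log\lambda\,d\nu/\int\log|\tau'|\,d\nu$ is almost surely exact dimensional with $\dim_H (\mathrm{id},W_\ttheta)_*\nu=\min\{\delta+1-\alpha,\delta/\alpha\}$; its proof runs along the transversality method of \cite{Hunt98} and \cite{Moss12} already used for Theorem~\ref{thm:graph_dim_random}, the critical point hypothesis guaranteeing that the randomised graph spreads enough in the fibre direction that $\tilde\nu_{\ttheta,\alpha}$ does not concentrate vertically beyond what the Hölder exponent $\alpha$ permits. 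Applying this with $\nu=\nu_\alpha$, $\delta=\D(\alpha)$ and invoking the mass distribution principle produces the matching lower bound, almost surely, for each fixed $\alpha\in\A$.

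It remains to pass from ``for each fixed $\alpha$, almost surely'' to ``almost surely, for all $\alpha\in\A$''. The upper bound already holds for all $\alpha$ on one almost sure event. For the lower bound I would first establish the identity on a countable dense set $\{\alpha_k\}\subset\A$ by intersecting countably many almost sure events, and then upgrade to arbitrary $\alpha\in\A$ using that $\alpha\mapsto\min\{\D(\alpha)+1-\alpha,\D(\alpha)/\alpha\}$ is continuous on $\A$ (as $\D$ is analytic there) together with $\alpha$-uniform versions of the estimates behind Lemma~\ref{lem:lift}: the oscillation and transversality bounds furnished by the critical point hypothesis, and the local dimension of $\nu_\alpha$, are all uniform for $\alpha$ in a compact subinterval $[\beta_1,\beta_2]\subset\A$ because $\{\nu_\alpha\}$ is an analytic family of Gibbs measures. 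Exhausting $\A$ by countably many such subintervals then gives a single almost sure event on which the formula holds for all $\alpha\in\A$.

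The main obstacle is precisely this last step: the level sets $E_{\ttheta,\alpha}$ for distinct $\alpha$ are pairwise disjoint and not nested, so no monotonicity in $\alpha$ is available and the ``for all $\alpha$'' statement cannot be reduced to a countable one for free; one must extract from the proof of the lifting lemma genuinely quantitative, $\alpha$-uniform transversality estimates. The rest — assembling the randomised version of Theorem~\ref{thm:dimE}, checking $\nu_\alpha(E_{\ttheta,\alpha})=1$ almost surely, and quoting Lemmas~\ref{lem:Jin} and~\ref{lem:lift} — is routine.
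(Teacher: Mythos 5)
Your fixed-$\alpha$ argument is the paper's: the upper bound comes from Lemma \ref{lem:Jin} together with $\dim_H E_{\ttheta,\alpha}=\dim_H S_\alpha=\D(\alpha)$ (via Lemma \ref{lem:hol_symbolic}), and the lower bound from lifting the Gibbs measure $\nu_\alpha$ of Lemma \ref{lem:wellknown} and applying Lemma \ref{lem:lift} with Lemma \ref{lem:Barreira_cite}. The genuine gap is precisely the step you flag yourself: upgrading from ``for each fixed $\alpha$, almost surely'' to ``almost surely, for all $\alpha\in\A$''. Your plan is to re-derive Lemma \ref{lem:lift} with $\alpha$-uniform transversality and ergodic estimates on compact subintervals of $\A$, justified by analyticity of $\alpha\mapsto\nu_\alpha$. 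As written this is an assertion, not an argument: the full-measure $\ttheta$-sets in Lemma \ref{lem:lift} are manufactured from the typical sets $\mathcal{C}_{\varepsilon,N}$ (Birkhoff and Shannon--McMillan--Breiman rates for the particular measure $\nu_\alpha$) and from finiteness of the energies $E_{\varepsilon,s}(\ttheta)$; neither the thresholds $N=N(\alpha)$ nor the exceptional $\ttheta$-sets become uniform in $\alpha$ merely because the family of Gibbs measures depends analytically on $\alpha$. One would need uniform large-deviation-type bounds for the whole family plus a continuity-in-$\alpha$ statement for the random energies to get a single almost sure event for uncountably many $\alpha$, and none of this is supplied. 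Since, as you note, the sets $E_{\ttheta,\alpha}$ are disjoint and non-nested, your route really does not reduce to countably many events, so the proposal as it stands only proves the theorem for each fixed $\alpha$ almost surely (equivalently, simultaneously for a countable dense set of $\alpha$).

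The paper closes this gap by a different, softer device that avoids any uniform estimate: it passes to the nested sub- and sup-level sets $S_\alpha^\leqslant$ and $S_\alpha^\geqslant$. On one full-measure set of $\ttheta$ the lower bound holds at all rational $\alpha'$ and $E_{\ttheta,\alpha}\bigtriangleup S_\alpha\subseteq\nullset$ for all $\alpha$; then, for $\alpha\in(\alpha_{\min},\alpha_c]$ and rational $\alpha'\leqslant\alpha$, monotonicity of $\alpha\mapsto S_\alpha^\leqslant$ gives $\dim_H(\graph W_\ttheta\cap(S_\alpha^\leqslant\times\R))\geqslant\dim_H(\graph W_\ttheta\cap(S_{\alpha'}\times\R))\geqslant\tilde{D}(\alpha')$, the identification of the dimension over $S_\alpha^\leqslant$ with that over the exact level set $S_\alpha$ (via Lemma \ref{lem:wellknown}) transfers this back to $E_{\ttheta,\alpha}$, and continuity of $\tilde{D}$ lets $\alpha'\uparrow\alpha$; the range $[\alpha_c,\alpha_{\max})$ is handled symmetrically with $S_\alpha^\geqslant$. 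That monotone-family argument is the missing idea you would need to import (or else genuinely carry out the quantitative uniform estimates you only sketch).
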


\begin{remark}
If $ W $ is non-degenerate and $\A\neq\emptyset $, it is not hard\footnote{The second equation follows immediately from basic properties of $\D (\alpha)$. The first one is, however, not a trivial corollary of the theorem because Hausdorff dimension is only $\sigma$-stable.
For verification, a slight modification of its proof seems to be necessary.} to verify
\[
	\dim _B (\graph W\times (\J\times\R))=\sup _{\alpha\in \A }\min\left\{\D (\alpha ) + 1 -\alpha ,\frac{\D (\alpha )}{\alpha}\right\} =\sup _{\alpha\in \A }\left\{\dim _H E _\alpha + 1 -\alpha \right\} .
\]
\end{remark}

\section{Preliminaries}
Here is the basic notation.
Most of the definitions are related to the basis dynamics $(\tau,\T)$, while some can be only defined for the restriction $(\tau _{|\J},\J)$.
Recall that $\T:=\I$ is endowed with the torus metric $d(x,u):=\max\{|x-u|,1-|x-u|\}$.
Given an interval $ I $, let $ | I | $ and  $\partial I $ denote its length and the set of its endpoints, respectively.
That is, $ | I | := b-a $ and $\partial I :=\{ a, b\} $, where $\overline{I} =: [a,b ] $.

\begin{itemize}
\item Let $ [ x ] _n := (\kappa ( x ) ,\kappa (\tau x ) ,\ldots ,\kappa (\tau ^{n-1} x ))$ for $ x\in\J $ and $ n\in\N $, where $\kappa ( x ) := i $ for $ x\in I _i $.
\item Let $\rho _i :[0,1]\to\overline{I_i}$ be the $i$-th inverse branch for $ i\in\Sigma _\ell $, i.e. the $C^0$-extension of $(\tau _{|I_i^\circ})^{-1}$.
\item Let $\rho _{\mathbf{i}}:=\rho _{i _n}\circ\cdots\circ\rho _{i _1}$ for $\mathbf{i}=(i_1\ldots ,i_n)\in(\Sigma _\ell)^n$ and $n\in\N$.
\item The $ n $-th monotonicity interval of $ x\in\J $ is the subinterval $ I _n (x)\subseteq\T $ defined by
\[
	I _n (x) := I _{\kappa (x)}\cap\tau ^{-1} I _{\kappa (\tau x)}\cap\cdots\cap\tau ^{-(n-1)} I _{\kappa (\tau ^{n-1} x)}.
\]
\item Given any function $\phi :\J\to\R $ we simply write
\[
	\phi _n ( x ) :=\sum _{k=0} ^{n-1}\phi (\tau ^k x )\quad\mbox{and}\quad\phi ^n ( x ) :=\prod _{k=0} ^{n-1}\phi (\tau ^k x ) 
\]
for $ x\in\J $ and $ n\in\N _0 $.
\item Given Borel measurable subset $ A\subseteq\R ^d $, let $\mathcal{P} ( A ) $ denote the set of all Borel probability measures on $ A $.
Moreover, let $\mathcal{P} _\tau (\J ) $ denote the set of all $\tau $-invariant Borel measures $\nu $ on $\J $, where the $\tau $-invariance means $\nu\circ\tau ^{-1} =\nu $.
\item $\nu\in\mathcal{P} _\tau (\J ) $ is a Gibbs measure if there are constants $ C _\phi > 0 $ and $ P _\phi $ and a Hölder continuous function $\phi :\J\to\R $ such that
\[
	C _\phi ^{-1}\leqslant\frac{\nu ( I _n (x) )}{e ^{\phi _n(x) - P _\phi}}\leqslant  C _\phi
\]
for all $ x\in\J $ and $ n\in\N $.
\item Let $P(\phi)$ denote the topological pressure on $(\J ,\tau)$ with respect to a Hölder continuous potential function $\phi :\J\to\R$, i.e.
\[
	P (\phi ) =\sup _{\nu\in\mathcal{P} _\tau (\J ) } h _\tau (\nu ) +\int\phi\, d\nu ,
\]
where $ h _\tau (\nu ) $ denotes the Kolmogorov-Sinai-entropy.
\end{itemize}
For more general definitions and details of Gibbs measures and topological pressure, see \cite{Barreira08} or \cite{Pesin97}.
The next lemma summarizes several important relations between equilibrium state and Gibbs measure, more general statements are found in \cite{Keller98}.

\begin{lemma}	\label{lem:eqisGibbs}
Let $\phi :\J\to\R $ be Hölder continuous.
The equilibrium state for the potential $\phi $ is a Gibbs measure for the same potential, and vice versa, where $P _\phi =P (\phi )$ is always satisfied.
In addition, any Gibbs measure is ergodic and atom-free.
\end{lemma}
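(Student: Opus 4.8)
The plan is to reduce everything to the standard Ruelle--Perron--Frobenius machinery for the full one-sided shift and transport it to $(\J,\tau_{|\J})$ through the natural coding $\pi:(\Sigma_\ell)^{\N_0}\supseteq X\to\J$ induced by the inverse branches $\rho_i$. First I would verify that $\tau_{|\J}$ is, up to this coding, a subshift of finite type: the monotonicity intervals $I_i$ together with the overlap pattern of $\tau(I_i^\circ)\cap I_j^\circ$ determine an admissible transition matrix, and the $C^{1+\alpha}$-diffeomorphism assumption on each branch together with the partial hyperbolicity \eqref{eq:partial_hyperbolic} (which in particular forces $|\tau'|>1$ on $\J$, hence bounded distortion of the compositions $\rho_{\mathbf i}$) guarantees that $\pi$ is H\"older continuous with H\"older inverse on cylinders, and that $I_n(x)$ corresponds to the $n$-cylinder $[x]_n$ with comparable diameters. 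Pulling a H\"older potential $\phi$ on $\J$ back to $\phi\circ\pi$ keeps it H\"older (in the shift metric), so I may invoke the classical result (e.g. from \cite{Keller98}, \cite{Bowen75}, or \cite{Barreira08}): for a H\"older potential on a topologically mixing SFT there is a unique equilibrium state, it is the unique invariant Gibbs measure for that potential, it is ergodic, and the Gibbs constant $P_\phi$ equals the topological pressure $P(\phi)$.

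The next step is to translate the shift-space Gibbs inequality into the interval Gibbs inequality stated in the excerpt. On the shift one has $C^{-1}\le\mu([\underline x]_n)\,e^{-S_n(\phi\circ\pi)(\underline x)+nP(\phi)}\le C$ where $S_n$ is the Birkhoff sum; pushing forward by $\pi$ and using $\nu(I_n(x))=\mu(\pi^{-1}I_n(x))$ together with $\pi^{-1}I_n(x)=[x]_n$ up to the $\pi$-null set $\pi^{-1}\nullset$ (finitely many boundary orbits, which carry no mass since, as the lemma also asserts, Gibbs measures are atom-free), and the identity $S_n(\phi\circ\pi)(\underline x)=\phi_n(x)$ in the notation of the Preliminaries, gives exactly $C_\phi^{-1}\le\nu(I_n(x))/e^{\phi_n(x)-P_\phi}\le C_\phi$ with $P_\phi=P(\phi)$. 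For the converse direction --- any interval-Gibbs measure for $\phi$ is an equilibrium state --- I would run the reverse transport: an interval-Gibbs measure pulls back to a shift measure satisfying the Gibbs inequality, and on an SFT a measure satisfying the Gibbs property for a H\"older potential is forced to coincide with the unique equilibrium state (one shows it is an equilibrium state directly via the variational principle, e.g. by the standard argument bounding entropy-plus-integral from below using the Gibbs bounds and Shannon--McMillan--Breiman, or simply by uniqueness of the Gibbs measure). Ergodicity and non-atomicity then come for free: ergodicity from mixing of the SFT and uniqueness, non-atomicity because an atom at $x$ would force $\nu(I_n(x))\gtrsim e^{\phi_n(x)-nP(\phi)}$ bounded below, contradicting $\operatorname{diam}I_n(x)\to0$ together with $|\tau'|>1$ giving $\phi_n(x)-nP(\phi)\to-\infty$ when $\phi$ is such that the pressure exceeds $\sup\phi$; more cleanly, uniqueness plus invariance of the non-atomic part under $\tau$ shows the atomic part must be an invariant finite set with positive Gibbs mass, again impossible by the shrinking-cylinder estimate.

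The main obstacle I anticipate is not any single deep step but the care needed around the coding map: $\tau$ is only piecewise defined, $\tau(x)=0$ off the $I_i$, the branches extend merely continuously (not $C^{1+\alpha}$) to the endpoints, and $\pi$ may fail to be injective on the countable set $\pi^{-1}\nullset$. One must check that this ambiguity is genuinely negligible --- i.e. that $\nullset\cap\J$ is at most countable and that the relevant Gibbs/equilibrium measures give it zero mass (this uses atom-freeness, which is part of what we are proving, so the argument has to be organised to avoid circularity: establish atom-freeness first on the shift side, where it is classical, then transport). A secondary technical point is bounded distortion: one needs the R\'enyi-type inequality $|(\rho_{\mathbf i})'(y)|/|(\rho_{\mathbf i})'(z)|\le\exp(C|y-z|^\alpha)$ uniformly in $\mathbf i$, which follows from the $C^{1+\alpha}$ hypothesis and uniform expansion, and which is exactly what makes the cylinder $I_n(x)$ comparable in size to $|(\rho_{[x]_n})'|\asymp e^{\text{(geometric Birkhoff sum)}}$ so that the H\"older regularity of $\phi\circ\pi$ and the matching of Birkhoff sums go through. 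Once these bookkeeping issues are dispatched, the statement is precisely the classical dictionary between equilibrium states and Gibbs measures, so I would keep the write-up short and cite \cite{Keller98} for the shift-space core.
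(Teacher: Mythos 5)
Your plan is sound and matches what the paper actually does: the lemma is stated there as a summary of classical facts with no proof beyond the pointer to \cite{Keller98}, which is precisely the shift-space core you reduce to via the standard bounded-distortion coding of the expanding cookie cutter. The transfer details you sketch (Hölder regularity of the coding, negligibility of the countable boundary set, establishing atom-freeness on the symbolic side first to avoid circularity) are the routine bookkeeping implicit in that citation, so you are not taking a genuinely different route.
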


\subsection*{Hausdorff dimension of measures}
We use the standard definitions of several dimensions in \cite{Barreira08} or \cite{Pesin97}, related to sets as well as measures, so we just recall a few things here.
Let $\mu $ be a Borel measure on a metric space $(\mathcal{E},d_{\mathcal{E}})$.
The lower pointwise dimension of $\mu $ is defined as
\[
	\underline{d} _\mu (u) :=\liminf _{r\to 0}\frac{\log\nu ( B _r (u) )}{\log r} 
\]
for each $ u\in E $, where $ B _r (u) :=\{ u'\in E : d_{\mathcal{E}} ( u, u')\leqslant r\} $ denote the closed balls.
In addition, the Hausdorff dimension of the measure $\mu $, denoted by $\dim _H\mu $, is defined as
\[
	\dim _H\mu :=\inf\{\dim _H Z : Z\subseteq E\mbox{ s.t. } \mu ^\ast ( E\setminus Z ) = 0\} ,
\]
where $\mu ^\ast $ is the outer measure extension of $\mu $.

The next lemma provides an alternative definition of $\dim _H\mu $.

\begin{lemma}[{ \cite[Theorem 2.1.5 (3)]{Barreira08}}]\label{lem:Barreira_cite}
Let $\mu $ be a Borel measure on an Euclidean space $ E $.
Then $\dim _H\mu  $ is the essential supremum of $\underline{d} _\mu $ with respect to $\mu $.
\end{lemma}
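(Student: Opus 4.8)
Since this is a classical fact, the plan is to derive it from the two halves of Billingsley's lemma --- the mass distribution principle and its Vitali-covering converse --- which in a Euclidean space I would quote (e.g.\ from the same monograph) rather than reprove. Write $s^\ast$ for the essential supremum of $\underline{d}_\mu$ with respect to $\mu$, so that $\mu$-almost every $u$ satisfies $\underline{d}_\mu(u)\le s^\ast$ while for every $\varepsilon>0$ the set $\{u\in E:\underline{d}_\mu(u)\ge s^\ast-\varepsilon\}$ has positive $\mu$-measure.

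To get $\dim_H\mu\le s^\ast$, observe that $Z_0:=\{u\in E:\underline{d}_\mu(u)\le s^\ast\}$ is a legitimate competitor in the infimum defining $\dim_H\mu$, since $\mu^\ast(E\setminus Z_0)=0$. On $Z_0$ the lower pointwise dimension is bounded above by $s^\ast$, and the covering half of Billingsley's lemma --- for each such $u$ there are arbitrarily small radii $r$ with $\mu(B_r(u))\ge r^{s^\ast+\varepsilon}$, and a Vitali-type selection of these balls bounds the $(s^\ast+\varepsilon)$-Hausdorff measure of $Z_0$ --- then gives $\dim_H Z_0\le s^\ast$. Hence $\dim_H\mu\le s^\ast$.

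For the opposite inequality, let $Z$ be any Borel set with $\mu^\ast(E\setminus Z)=0$; it is enough to show $\dim_H Z\ge s^\ast$. Fixing $\varepsilon>0$ and setting $A_\varepsilon:=\{u\in E:\underline{d}_\mu(u)\ge s^\ast-\varepsilon\}$, we have $\mu(A_\varepsilon)>0$, and because $Z$ has full outer measure also $\mu^\ast(Z\cap A_\varepsilon)=\mu(A_\varepsilon)>0$ (as $(E\setminus Z)\cap A_\varepsilon$ has outer measure $0$). Applying the mass distribution principle to $\mu$ restricted to $Z\cap A_\varepsilon$, all of whose points have $\underline{d}_\mu\ge s^\ast-\varepsilon$, yields $\dim_H(Z\cap A_\varepsilon)\ge s^\ast-\varepsilon$, so $\dim_H Z\ge s^\ast-\varepsilon$; letting $\varepsilon\downarrow 0$ and then taking the infimum over admissible $Z$ completes the argument, and combining the two inequalities proves the lemma.

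The only real work hides in Billingsley's lemma itself, which I expect to be the main obstacle to a self-contained treatment; there the upper estimate is a short Vitali covering argument and the lower one the standard mass distribution principle. Two minor points deserve a line of care: the definition of $\dim_H\mu$ uses the outer extension $\mu^\ast$, so one must note that intersecting a full-outer-measure set with a positive-measure set preserves positive outer measure (used above), which in turn needs Borel measurability of $u\mapsto\mu(B_r(u))$ and hence of $\underline{d}_\mu$; and if $\mu$ is permitted to be infinite one first exhausts $E$ by subsets of finite measure, on which $\underline{d}_\mu$ is unchanged, and passes to the limit.
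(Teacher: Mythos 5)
The paper offers no proof of this lemma at all: it is quoted verbatim from \cite[Theorem 2.1.5 (3)]{Barreira08}, so there is nothing internal to compare against. Your argument is the standard two-sided one that the cited source itself uses — the upper bound $\dim_H\mu\leqslant s^\ast$ via the full-measure set $\{\underline{d}_\mu\leqslant s^\ast\}$ and a Vitali/$5r$-covering estimate, the lower bound via the mass distribution principle applied to $Z\cap\{\underline{d}_\mu\geqslant s^\ast-\varepsilon\}$, noting that a full-outer-measure set meets a positive-measure set in positive outer measure — and it is correct; the caveats you flag (Borel measurability of $\underline{d}_\mu$, which follows by taking the $\liminf$ over rational radii, and finiteness of $\mu$) are harmless in this paper, where the lemma is only applied to lifts of Gibbs probability measures.
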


\section{Dimensions of the graph}
We give proofs of Theorems \ref{thm:dim} and \ref{thm:graph_dim_random} in this section.
In addition, a short proof of Lemma \ref{lem:degenerate} can be also found here.

The following two basic lemmas are repeatedly used throughout the proof sections.

\begin{lemma}	\label{lem:tau_distortion}
There is a constant $ D > 0 $ such that
\[
	\left|\frac{(\tau^n )' (x)}{(\tau ^n) ' (u)}\right|\in [ D ^{-1} , D ],\quad  | (\tau ^n)'(u)| \cdot | I _n (x) |\in [ D ^{-1} , D ]\quad\mbox{and}\quad \frac{|\rho _{[x] _n} ' (v) |}{| I _n (x) |}\in [ D ^{-1} , D ] 
\]
for all $ u\in I _n (x) $, $ x\in\J$, $v\in\T$ and $ n\in\N $.

In addition, there is a constant $\delta _0 > 0 $ such that
\[
	\delta _0\leqslant\frac{|I _{n+1} (x)|}{| I _n (x) |}\leqslant\delta _0^{-1}
\]
for all $ x\in\J $ and $ n\in\N $.
\end{lemma}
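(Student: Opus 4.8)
The plan is to reduce all four assertions to a single bounded-distortion estimate for the inverse branches. First I would record the Markov structure of $\tau$: since each $\tau_{|I_i^\circ}$ maps diffeomorphically onto the \emph{whole} of $(0,1)$, for every $x\in\J$ and $n\in\N$ the iterate $\tau^n$ restricts to a $C^{1+\alpha}$-diffeomorphism of $I_n(x)^\circ$ onto $(0,1)$; write $\rho_{[x]_n}$ for its inverse, a composition of $n$ of the branches $\rho_i$, so that $|I_n(x)|=\int_0^1|\rho_{[x]_n}'(v)|\,dv$ and $|(\tau^n)'(u)|=1/|\rho_{[x]_n}'(\tau^n u)|$ for $u\in I_n(x)^\circ$. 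Second I would extract uniform hyperbolicity from \eqref{eq:partial_hyperbolic}: since $\lambda$ is continuous on the compact torus $\T=\R/\Z$ with values in $(0,1)$, the number $\gamma:=\sup_\T\lambda$ satisfies $\gamma<1$, and \eqref{eq:partial_hyperbolic} gives $|\tau'|>1/\lambda\geqslant1/\gamma$ on $\bigcup_iI_i^\circ$; hence $|\rho_i'|=1/|\tau'\circ\rho_i|\leqslant\gamma<1$, so every composition of $m$ branches is a $\gamma^m$-Lipschitz contraction. Finally, the $C^{1+\alpha}$-regularity of the branches furnishes a single constant $H$ with $|\log|\rho_i'(v)|-\log|\rho_i'(w)||\leqslant H|v-w|^\alpha$ for all $i$ and all $v,w\in[0,1]$.

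The core step is the distortion inequality
\[
	\sup_{v,w\in[0,1]}\left|\log\frac{\rho_{[x]_n}'(v)}{\rho_{[x]_n}'(w)}\right|\leqslant\log D,\qquad \log D:=\frac{H}{1-\gamma^{\alpha}},
\]
uniform in $x\in\J$ and $n\in\N$. One telescopes $\log|\rho_{[x]_n}'|$ along the chain rule for the composition of $n$ branches: the $k$-th term is a difference of values of some $\log|\rho_i'|$ at two points of $[0,1]$ lying at distance at most $\gamma^{\,n-k}|v-w|\leqslant\gamma^{\,n-k}$ (the images of $v$ and $w$ under the innermost $n-k$ contractions), so the H\"older bound dominates the whole sum by $H\sum_{j\geqslant0}\gamma^{j\alpha}=\log D$. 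Because $|I_n(x)|=\int_0^1|\rho_{[x]_n}'|\,dv$ lies between $\inf_{[0,1]}|\rho_{[x]_n}'|$ and $\sup_{[0,1]}|\rho_{[x]_n}'|$, this at once gives the third claim, $|\rho_{[x]_n}'(v)|/|I_n(x)|\in[D^{-1},D]$ for all $v\in\T$; the first claim follows by taking $v=\tau^n u$ and $w=\tau^n x$ (note $x\in I_n(x)$), and the second by combining $1/|(\tau^n)'(u)|=|\rho_{[x]_n}'(\tau^n u)|$ with the third.

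For the last assertion, the Markov structure gives $I_{n+1}(x)^\circ=\rho_{[x]_n}(I_{\kappa(\tau^n x)}^\circ)$, hence
\[
	|I_{n+1}(x)|=\int_{I_{\kappa(\tau^n x)}}|\rho_{[x]_n}'(v)|\,dv\ \in\ \Big[\,D^{-1}\big(\min_{i}|I_i|\big)|I_n(x)|,\ |I_n(x)|\,\Big],
\]
the upper bound using $I_{n+1}(x)\subseteq I_n(x)$ and the lower bound using the third claim; so $\delta_0:=\min\{1,D^{-1}\min_i|I_i|\}$ works. The only delicate point is the regularity of the branches up to the endpoints: the telescoping needs $\log|\rho_i'|$ to be $\alpha$-H\"older on the \emph{closed} interval $[0,1]$ with an $i$-uniform constant, which is part of the cookie-cutter hypotheses (or an immediate consequence of them); should one only assume $C^{1+\alpha}$-regularity in the open interiors, the identical bound still comes out of the Koebe distortion principle applied to the full branches $\tau^n\colon I_n(x)^\circ\to(0,1)$, using once more that their domains shrink geometrically. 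Apart from this there is no genuine obstacle, and in particular no circularity: the geometric decay $|I_m(y)|\leqslant\gamma^m$ that powers the telescoping is handed to us directly by \eqref{eq:partial_hyperbolic}, independently of the distortion bound.
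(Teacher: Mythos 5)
Your proposal is correct and is essentially the paper's own argument: the same telescoped bounded-distortion estimate, summing H\"older increments of the log-derivative and controlling them by the uniform geometric contraction coming from \eqref{eq:partial_hyperbolic}, from which the three derivative estimates and the ratio bound all follow. The only (cosmetic) differences are that you telescope along the inverse branches $\rho_{[x]_n}$ instead of along the forward orbit of $\log|\tau'|$, and you obtain $\delta_0$ from the full-branch Markov structure ($|I_{n+1}(x)|=\int_{I_{\kappa(\tau^n x)}}|\rho_{[x]_n}'|$) rather than by applying the second estimate twice as the paper does; both routes rest on the same implicit assumption, also used in the paper, that $\log|\tau'|$ is $\alpha$-H\"older up to the endpoints of each $I_i$.
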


\begin{proof}
As $\tau'$ is piecewise $\alpha$-Hölder continuous on $\bigcup _{i\in\Sigma _\ell} I_i$, so is $\log |\tau'|$.
Let $a:=\sup_{i\in\Sigma _\ell}\sup_{u\in(I_i)^\circ}|1/\tau'(u)|$ so that, in view of the mean value theorem, $ |I _k (x)|\leqslant a ^k $ for all $x\in\J$ and $k\in\N$.
The first estimate follows as
\[
	\left|\log\left|\frac{(\tau^n )' (x)}{(\tau ^n) ' (u)}\right|\right| \leqslant \sum _{j=0} ^{n-1} \left|\log |\tau'| (\tau ^j x) - \log|\tau'| (\tau ^j u) \right|\leqslant C _0\sum _{j=0} ^{n-1} | I _{n-j} (\tau ^j x ) | ^\alpha\leqslant\frac{C _0}{1- a^\alpha} ,
\]
where $ C _0 $ is the $\alpha $-Hölder constant of $\log |\tau '|$.
The second one can be derived from the first one by the mean value theorem, and the last one is a special case thereof with $u=\rho _{[x]_n}(v)$.
Finally, we can choose $\delta _0=D^{-2}a$.
The assertion is obtained by applying the second estimate twice.
\end{proof}

\begin{lemma}	\label{lem:Bedford_weak}
The followings are true.
\begin{itemize}
\item There is a constant $\overline{C} > 0 $ such that
\[
	  \sup _{v\in I _n (x)} | W _\ttheta (x) - W _\ttheta (v) |\leqslant\overline{C}\,\lambda ^n (x)
\]
for all $ x\in\J $, $ n\in\N $ and $\ttheta\in\T ^{\N _0} $.
\item If $ W $ is non-degenerate, then there is a constant $ c > 0 $ such that
\[
	c \,\lambda ^n (x) \leqslant \sup _{v\in\J\cap I _n (x)} | W  (x) - W(v) |
\]
for all $ x\in\J $ and $ n\in\N $.
\item If $ W _\ttheta $ is non-degenerate for almost all $\ttheta $, then there is a measurable function $ c :\T ^{\N _0}\to [ 0,\infty) $ such that
\[
	c (\sigma ^n\ttheta ) \,\lambda ^n (x) \leqslant \sup _{v\in\J\cap  I _n (x)} | W _\ttheta  (x) - W _\ttheta (v) |
\]
for all $ x\in\J $, $ n\in\N $ and $\ttheta\in \T ^{\N _0 } $, where $\sigma :\T ^{\N _0}\to\T ^{\N _0} $ is the left shift operator.
In addition, $\lim _{n\to\infty}\frac{\log c (\sigma ^n\ttheta )}{n} = 0 $ for almost all $\ttheta$.
\end{itemize}
\end{lemma}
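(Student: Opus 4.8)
\medskip
\noindent\emph{Proof plan.} Throughout put $\overline\lambda:=\sup_\T\lambda<1$ and $M:=\|g\|_\infty/(1-\overline\lambda)$, so $\|W_\ttheta\|_\infty\le M$ for every $\ttheta$; fix $\gamma>1$ with $|\tau'|\lambda\ge\gamma$ on $\bigcup_i I_i^\circ$ (which exists by \eqref{eq:partial_hyperbolic}), let $L_g,L_\lambda$ be Lipschitz constants of $g,\lambda$ on the torus, and set $K:=L_g+L_\lambda M$. Collapsing the first term of \eqref{eq:W_def} gives the functional equation $W_\ttheta(x)=g(x+\vartheta_0)+\lambda(x)\,W_{\sigma\ttheta}(\tau x)$, whose $n$-fold iterate reads $W_\ttheta=h^\ttheta_n+\lambda^n\cdot(W_{\sigma^n\ttheta}\circ\tau^n)$ with $h^\ttheta_n:=\sum_{j<n}\lambda^j\,g(\tau^j\cdot+\vartheta_j)$ (write $h_n:=h^{\mathbf 0}_n$). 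By \eqref{eq:partial_hyperbolic} and Lemma~\ref{lem:tau_distortion} one has $\lambda^m(y)\ge\gamma^m|I_m(y)|/D$ and a uniform bound $\lambda^m(v)/\lambda^m(y)\in[D_\lambda^{-1},D_\lambda]$ for $v\in I_m(y)$; both are used repeatedly.

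\emph{First item.} For $v\in I_n(x)$ one has $\tau v\in I_{n-1}(\tau x)$ and $W_\ttheta(x)-W_\ttheta(v)=(g(x+\vartheta_0)-g(v+\vartheta_0))+\lambda(x)(W_{\sigma\ttheta}(\tau x)-W_{\sigma\ttheta}(\tau v))+(\lambda(x)-\lambda(v))W_{\sigma\ttheta}(\tau v)$. Hence, with $\Delta_n(x,\ttheta):=\sup_{v\in I_n(x)}|W_\ttheta(x)-W_\ttheta(v)|$, we get $\Delta_n(x,\ttheta)\le K|I_n(x)|+\lambda(x)\Delta_{n-1}(\tau x,\sigma\ttheta)$, and unfolding (using $\Delta_0\le 2M$), $\Delta_n(x,\ttheta)\le K\sum_{j<n}\lambda^j(x)|I_{n-j}(\tau^j x)|+2M\lambda^n(x)$. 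Since $\lambda^j(x)|I_{n-j}(\tau^j x)|/\lambda^n(x)=|I_{n-j}(\tau^j x)|/\lambda^{n-j}(\tau^j x)\le D\gamma^{-(n-j)}$, summing the geometric series gives the first inequality with $\overline C:=KD/(\gamma-1)+2M$, uniformly in $\ttheta$.

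\emph{Second item.} Put $\Psi_n(x):=\lambda^n(x)^{-1}\sup_{v\in\J\cap I_n(x)}|W(x)-W(v)|$; I claim $\Theta:=\inf\{\Psi_n(x):x\in\J,\ n\in\N\}>0$, and then take $c:=\Theta$. Since every branch of the cookie cutter maps $I_i\cap\J$ onto $\J$, the map $\tau$ sends $\J\cap I_n(x)$ onto $\J\cap I_{n-1}(\tau x)$; repeating the computation of the first item verbatim gives $|\Psi_n(x)-\Psi_{n-1}(\tau x)|\le KD\gamma^{-n}$, so $\beta_n:=\inf_{x\in\J}\Psi_n(x)$ is a Cauchy sequence and $\Theta=\inf_n\beta_n$. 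On a fixed $n$-cylinder $\Psi_n$ is continuous and vanishes only where $W_{|\J}$ is constant; by compactness, $\beta_n=0$ would force $W$ constant on some $\J\cap I_n(x_\ast)$, whence $W(u)=\lambda^n(\rho_{[x_\ast]_n}(u))^{-1}(W(x_\ast)-h_n(\rho_{[x_\ast]_n}(u)))$ on $\J$ exhibits $W_{|\J}$ as the restriction of a $C^{1+\alpha}$ map, i.e.\ $W$ degenerate — contradiction. So $\beta_n>0$ for every $n$, and if $\Theta=0$ then $\beta_n\to0$; pick $x_n\in\J$ with $\Psi_n(x_n)=\beta_n$. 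Consider the rescalings $F_n:=\lambda^n(x_n)^{-1}(W\circ\rho_{[x_n]_n}-W(x_n))$ on $[0,1]$. From $W=h_n+\lambda^n\cdot(W\circ\tau^n)$ and $\tau^n\circ\rho_{[x_n]_n}=\mathrm{id}$ one gets $F_n=G_n+\mu_n\cdot W$ with $\mu_n:=\lambda^n(\rho_{[x_n]_n}(\cdot))/\lambda^n(x_n)$ and $G_n:=\lambda^n(x_n)^{-1}(h_n\circ\rho_{[x_n]_n}-W(x_n))$. By Lemma~\ref{lem:tau_distortion}, $\mu_n\in[D_\lambda^{-1},D_\lambda]$ and $\mu_n$ is Lipschitz uniformly in $n$ (its logarithmic derivative is a sum of geometrically small terms); by the first item $\|G_n\|_\infty$ is bounded, and writing $G_n(t)-G_n(s)=\lambda^n(x_n)^{-1}(W(\rho_{[x_n]_n}(t))-W(\rho_{[x_n]_n}(s)))-(\mu_n(t)W(t)-\mu_n(s)W(s))$ and applying the first item on the nested subcylinder containing $\rho_{[x_n]_n}(t)$ and $\rho_{[x_n]_n}(s)$ shows $\{G_n\}$ equicontinuous. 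Arzelà–Ascoli gives a subsequence with $F_{n_i}\to F_\infty=G_\infty+\mu_\infty\cdot W$ uniformly, $\mu_\infty\ge D_\lambda^{-1}$; since $\rho_{[x_n]_n}(\J)\subseteq\J$ we have $\sup_\J|F_n|\le\beta_n\to0$, so $W=-G_\infty/\mu_\infty$ on $\J$. Feeding this back into the functional equation and bootstrapping (the degenerate-case analysis of \cite{Bedford89}) forces $W_{|\J}$ to agree with a $C^{1+\alpha}$ function, i.e.\ $W$ is degenerate — again a contradiction. Hence $\Theta>0$.

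\emph{Third item.} For almost every $\ttheta$ the function $W_\ttheta$ is non-degenerate, and since the constants $K,D,\gamma,D_\lambda$ above do not involve $\ttheta$, the second item applied to $W_\ttheta$ produces a positive constant. Carrying out that construction fibrewise over the shift $(\T^{\N_0},\sigma)$ — using that, via the functional equation, $W_\ttheta$ restricted to $I_n(x)$ is an affine image, with $C^\alpha$ and uniformly bounded slope $\mu$, of $W_{\sigma^n\ttheta}$ up to the controlled perturbation $h^\ttheta_n$ — yields a measurable $c:\T^{\N_0}\to[0,\infty)$ which is positive almost surely, bounded above by $\overline C$, and satisfies $c(\sigma^n\ttheta)\lambda^n(x)\le\sup_{v\in\J\cap I_n(x)}|W_\ttheta(x)-W_\ttheta(v)|$ for all $x\in\J$, $n\in\N$ and $\ttheta$. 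Finally $\sigma$ preserves the i.i.d.\ uniform measure and is ergodic; since $0<c\le\overline C$ a.s.\ and $\log c$ is integrable (from a tail estimate on the measure of $\{c<\delta\}$ as $\delta\to0$), Birkhoff's ergodic theorem gives $\frac1n\sum_{k<n}\log c(\sigma^k\ttheta)\to\int\log c$, whence $\frac1n\log c(\sigma^n\ttheta)\to0$ almost surely. The delicate point of the whole argument is the compactness step in the second item: showing that the rescaled smooth parts $G_n$ stay equicontinuous (their pointwise derivatives need not stay bounded) and that the limit $-G_\infty/\mu_\infty$ is regular enough to force degeneracy, which both rest on a careful use of Lemma~\ref{lem:tau_distortion} to compare $h_n$-oscillations and cylinder lengths on nested cylinders against $\lambda^n$.
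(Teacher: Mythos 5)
Your treatment of the first two items is a legitimate alternative route (a direct recursion from the functional equation $W_\ttheta=h^\ttheta_n+\lambda^n\,(W_{\sigma^n\ttheta}\circ\tau^n)$, plus a blow-up/compactness argument for the lower bound), and, apart from some hand-waving — the equicontinuity of $G_n$ is asserted via ``nested subcylinders'' although two nearby parameters need not share a deep common cylinder (it does hold, but by estimating $(h_n\circ\rho_{[x_n]_n})'$ against $\lambda^n(x_n)$ using $|I_m(y)|\leqslant D\,\Delta^{-m}\lambda^m(y)$), and the step ``$W_{|\J}$ Lipschitz $\Rightarrow$ degenerate'' is used without proof (the paper leans on the same implication) — it can be made to work. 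The paper instead propagates curves under the skew contractions $F_{\vartheta,i}$ (Lemmas \ref{lem:Bed_estlem_1}, \ref{lem:Bed_estlem_2}), which buys a quantitative two-sided estimate $\bigl(L'|C|_{\J,H}-L''|C|_W\bigr)\lambda^n\leqslant|F_{\ttheta,\mathbf i}\circ C|_{\J,H}\leqslant L'''\lambda^n$ rather than a soft compactness contradiction; this quantitative form is exactly what the third item needs.

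The genuine gap is the third item, which is the novel content of the lemma. You never construct $c$; you only say ``carry out the second item fibrewise''. That cannot work as sketched, for three reasons. First, the second-item argument is not a single-fibre argument once $\ttheta\neq\mathbf 0$: the functional equation couples $W_\ttheta$ on $I_n(x)$ to the \emph{different} function $W_{\sigma^n\ttheta}$, so your Cauchy/blow-up scheme would compare quantities across fibres and its contradiction would concern some limit fibre, not $W_\ttheta$. Second, the statement you must prove has $c(\sigma^n\ttheta)$, i.e.\ a sequence that may decay in $n$; a constant-in-$n$ per-fibre bound $\inf_{n,x}\lambda^n(x)^{-1}\sup_{v\in\J\cap I_n(x)}|W_\ttheta(x)-W_\ttheta(v)|>0$ is exactly what one should \emph{not} expect, since by the renormalisation the level-$n$ normalised oscillation is comparable to an oscillation functional of $W_{\sigma^n\ttheta}$, which gets arbitrarily small along the orbit $(\sigma^n\ttheta)_n$. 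Moreover, relating the oscillation of $W_\ttheta$ on $I_n(x)$ to that of $W_{\sigma^n\ttheta}$ is not a matter of a ``controlled perturbation'': the smooth part $h^\ttheta_n$ has oscillation of the same order $\lambda^n(x)$ as the main term, which is precisely why the paper works with the Lipschitz-corrected functional $a_L(\ttheta)=\sup_{u,v\in\J}|W_\ttheta(u)-W_\ttheta(v)|-L|u-v|$ and the lower bound of Lemma \ref{lem:Bed_estlem_2}, where the correction is absorbed multiplicatively. Third, the asymptotics $\tfrac1n\log c(\sigma^n\ttheta)\to0$ is not free: your appeal to Birkhoff presupposes $\log c\in L^1$, which you justify only by an unnamed ``tail estimate''; the paper avoids exactly this issue by proving the almost-supermultiplicativity $a_L(\ttheta)\geqslant(\inf c_i)\,a_L(\sigma\ttheta)$ and invoking \cite[Lemma 2]{Keller96} to get $\log a_L-\log a_L\circ\sigma\in L^1$ with zero integral, from which the subexponential decay follows by telescoping — without ever proving $\log a_L\in L^1$. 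As it stands, your third item asserts the conclusion rather than proving it.
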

\begin{proof}
The proof is provided in Section \ref{sec:proof_Bed_weak}.
\end{proof}

As a corollary of these lemmas, the nowhere-differentiability argument follows.

\begin{proof}[Proof of Lemma \ref{lem:degenerate}]
Assume that $W$ is not of class $C^{1+\alpha}(\T )$.
In \cite[Section 5]{Bedford89}, it is shown in case $\J =\T$ and $\tau ' > 0$ that $W$ is not locally Lipschitz in each fixed point of $\tau$.
In our slightly more general setting, the same statement can be similarly verified.
In particular, we are in the non-degenerate situation.
Let $x\in\J$ be arbitrary.
By the second point of Lemma \ref{lem:Bedford_weak}, for each $n\in\N$ there is a $u_n\in\J\cap I_n(x)$ such that $c \lambda ^n(x)\leqslant |W(x)-W(u_n)| $.
On the other hand, by Lemma \ref{lem:tau_distortion}, $|x -u_n|\leqslant|I_n(x)|\leqslant D/|(\tau ^n)'(x)|$ for all $n\in\N$.
Hence, with $\Delta$ denoting the value on the left hand side of the partial hyperbolicity condition \eqref{eq:partial_hyperbolic}, we have $\lim _{n\to\infty}u_n =x$ but
\[
	\left|
	\frac{W(x)-W(u_n)}{x - u_n}\right|\geqslant\frac{c|(\tau ^n)'(x)|\lambda ^n(x)}{D}\geqslant (c/D) \Delta^n\quad\stackrel{n\to\infty}{\longrightarrow}\infty.
\]
Thus the derivative of $W$ in $x$ does not exist.
\end{proof}

\subsection*{Proofs of Theorems \ref{thm:dim} and \ref{thm:graph_dim_random}}

The proofs consist of the three Lemmas \ref{lem:box_hausdorff_upper}, \ref{lem:box_lower} and \ref{lem:lift}.
All upper bounds for the dimensions in the both theorems are provided in the first lemma, while the lower bounds are shown in several steps as follows.
For the box dimension, the lower estimates follow from the second lemma, whose assumption is satisfied in view of the second and third point of Lemma \ref{lem:Bedford_weak} in cases of Theorems \ref{thm:dim} and \ref{thm:graph_dim_random}, respectively.
In order to obtain the lower bound for the Hausdorff dimension stated in Theorem \ref{thm:graph_dim_random}, we need to apply the third lemma to the equilibrium states $\nu_1$ and $\nu_2$ for the pressures defined in \eqref{eq:s12}, respectively.
From that lemma, we can conclude the following for almost all $\ttheta$:
If $s_1\leqslant s_2$, then the lift of $\nu_1$ on the graph of $W_\ttheta$ has dimension $s_1$, whereas if $s_1>s_2$, the lift of $\nu_2$ has dimension $s_2$.
In view of Lemma \ref{lem:Barreira_cite}, it follows that $\min\{s_1,s_2\}\leqslant\dim _H (\graph W_\ttheta\cap (\J\times\R))$ for almost all $\ttheta$.

\begin{lemma}	\label{lem:box_hausdorff_upper}
For any $\ttheta\in\T^{\N_0}$, we have
\[
	\overline{\dim} _B(\graph W _\ttheta\cap(\J\times\R))\leqslant s _1\quad\mbox{and}\quad\dim _H(\graph W _\ttheta\cap(\J\times\R))\leqslant\min\{s _1 , s _2\}.
\]
\end{lemma}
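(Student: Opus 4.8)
The plan is to cover $\graph W_\ttheta\cap(\J\times\R)$ by the natural family of rectangles indexed by the monotonicity intervals $I_n(x)$, controlling the vertical oscillation of $W_\ttheta$ on each such interval by Lemma \ref{lem:Bedford_weak}. First I would fix $n$ and consider, for each admissible word $\mathbf{i}\in(\Sigma_\ell)^n$, the image interval $I_\mathbf{i}:=\rho_\mathbf{i}([0,1])$; over such an interval $W_\ttheta$ oscillates by at most $\overline C\,\lambda^n(x)$ for any $x$ in it, by the first point of Lemma \ref{lem:Bedford_weak}. By Lemma \ref{lem:tau_distortion}, $|I_\mathbf{i}|\asymp |(\tau^n)'(x)|^{-1}$ uniformly, and moreover both $|I_\mathbf{i}|$ and $\lambda^n(x)$ have bounded multiplicative variation as $x$ ranges over $I_\mathbf{i}$, so I may pick one representative $x_\mathbf{i}$ per word. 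To cover the graph piece over $I_\mathbf{i}$ I split $I_\mathbf{i}$ into roughly $\lambda^n(x_\mathbf{i})/|I_\mathbf{i}|$ (which is $\geqslant 1$ by partial hyperbolicity \eqref{eq:partial_hyperbolic}, at least eventually) squares of side $|I_\mathbf{i}|$, stacked to height $\overline C\lambda^n(x_\mathbf{i})$, so that the graph over $I_\mathbf{i}$ meets at most $O\!\left(\lambda^n(x_\mathbf{i})/|I_\mathbf{i}|\right)$ of them.

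For the box-dimension bound, summing over all $\mathbf{i}$, the number $N_n$ of squares of side $\varepsilon_n:=\max_\mathbf{i}|I_\mathbf{i}|$ (comparable to a single scale since consecutive ratios are pinched between $\delta_0$ and $\delta_0^{-1}$) needed to cover the graph satisfies $N_n\lesssim\sum_\mathbf{i}\lambda^n(x_\mathbf{i})\,|I_\mathbf{i}|^{-1}\asymp\sum_\mathbf{i}\lambda^n(x_\mathbf{i})\,|(\tau^n)'(x_\mathbf{i})|$. By the distortion lemma this sum is comparable to $\sum_\mathbf{i}|I_\mathbf{i}|^{-1}\lambda^n(x_\mathbf{i})$, and taking $s$ slightly above $s_1$ one has $|I_\mathbf{i}|^{s-1}\gtrsim\varepsilon_n^{s-1}$, hence $N_n\varepsilon_n^{s}\lesssim\sum_\mathbf{i}|I_\mathbf{i}|^{s-2}\cdot|I_\mathbf{i}|\cdot\lambda^n(x_\mathbf{i})$; cleaning this up, the exponent that makes $\sum_\mathbf{i}|I_\mathbf{i}|^{s-1}\lambda^n(x_\mathbf{i})$ bounded is exactly $s_1$, because $\log\sum_\mathbf{i}\exp\!\big((1-s)\log|(\tau^n)'|(x_\mathbf{i})+\log\lambda^n(x_\mathbf{i})\big)/n\to P_\J((1-s)\log|\tau'|+\log\lambda)$, which is negative for $s>s_1$ and the first Bowen equation in \eqref{eq:s12} pins down $s_1$ as its zero. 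This gives $\overline{\dim}_B\leqslant s_1$ after letting $s\downarrow s_1$.

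For the Hausdorff bound I run the same covering but do not collapse to a common scale: I estimate the $s$-dimensional Hausdorff sum directly. Using the $\lambda^n(x_\mathbf{i})/|I_\mathbf{i}|$ squares of diameter $\asymp|I_\mathbf{i}|$ over each word $\mathbf{i}$ gives $\sum_{\mathbf{i}}\big(\lambda^n(x_\mathbf{i})/|I_\mathbf{i}|\big)|I_\mathbf{i}|^{s}=\sum_\mathbf{i}\lambda^n(x_\mathbf{i})|I_\mathbf{i}|^{s-1}$, which as above tends to $0$ as $n\to\infty$ once $s>s_1$, yielding $\dim_H\leqslant s_1$. To get the bound by $s_2$, I instead cover the graph piece over $I_\mathbf{i}$ by a single square of side $\overline C\lambda^n(x_\mathbf{i})$ (legitimate because the whole graph over $I_\mathbf{i}$ fits in a box of height $\overline C\lambda^n(x_\mathbf{i})$ and width $|I_\mathbf{i}|\leqslant\overline C\lambda^n(x_\mathbf{i})$ for large $n$, by partial hyperbolicity). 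Then the $s$-sum is $\lesssim\sum_\mathbf{i}\lambda^n(x_\mathbf{i})^{s}=\sum_\mathbf{i}\exp(s\log\lambda^n(x_\mathbf{i}))$, whose exponential growth rate is $P_\J(s\log\lambda)$, negative for $s>s_2$ by the second Bowen equation; hence $\dim_H\leqslant s_2$. Combining the two gives $\dim_H\leqslant\min\{s_1,s_2\}$.

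The main obstacle is the careful bookkeeping that turns the covering sums into genuine topological-pressure limits: one must verify that $\sum_{\mathbf{i}\in(\Sigma_\ell)^n}\exp(\phi_n(x_\mathbf{i}))$ has exponential growth rate $P_\J(\phi)$ uniformly in the choice of representatives $x_\mathbf{i}$, which requires the bounded-distortion estimates of Lemma \ref{lem:tau_distortion} together with the definition of pressure via $(n,\varepsilon)$-separated sets — and crucially, one must handle the finitely many words that are not yet ``partially hyperbolic at step $n$'' (where $\lambda^n(x_\mathbf{i})<|I_\mathbf{i}|$ could fail to hold), which only affects finitely many scales and is absorbed into constants. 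A secondary subtlety is that $\J$ may be a proper, fractal subset of $\T$, so the relevant pressure is on $(\J,\tau_{|\J})$ and the words must be restricted to those with $I_\mathbf{i}\cap\J\neq\emptyset$; the distortion lemma is already stated for $x\in\J$, so this is consistent, but one should note it explicitly.
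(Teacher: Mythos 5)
Your Hausdorff--dimension half is essentially correct, and for the bound by $s_2$ it takes a genuinely different, more direct route than the paper: you cover the graph over each generation-$n$ cylinder by a single box of diameter $\lesssim\lambda^n(x_{\mathbf i})$, which is legitimate since $|I_n(x)|\leqslant D\,\Delta^{-n}\lambda^n(x)$ by \eqref{eq:partial_hyperbolic} and Lemma \ref{lem:tau_distortion}, and then the $s$-dimensional covering sum $\sum_{\mathbf i}(\lambda^n(x_{\mathbf i}))^s$ tends to $0$ for $s>s_2$ (either via the partition-sum characterisation of $P_\J(s\log\lambda)$ or, closer to the paper's toolkit, via the Gibbs property of $\nu_2$, which gives $(\lambda^n(x))^s\leqslant(\sup\lambda)^{n(s-s_2)}C_2\,\nu_2(I_n(x))$ and hence a bound $C_2(\sup\lambda)^{n(s-s_2)}$). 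The paper instead proves $\dim_H E^<_\alpha\leqslant s_2\alpha$ for sub-level sets of the symbolic exponent and then combines Lemma \ref{lem:SJ} with Jin's inequality and a partition of the exponent range; your argument bypasses that machinery entirely (the paper's detour has the side benefit of producing estimates reused for the spectra). Likewise your direct covering for $\dim_H\leqslant s_1$ is fine, because for Hausdorff measure the covering sets may have different diameters.

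The upper box dimension part, however, has a genuine gap. You treat all generation-$n$ cylinders as being of one comparable scale $\varepsilon_n=\max_{\mathbf i}|I_{\mathbf i}|$, justified by the pinching $\delta_0\leqslant|I_{n+1}(x)|/|I_n(x)|\leqslant\delta_0^{-1}$; but that pinching compares \emph{consecutive generations along one itinerary}, not different cylinders of the same generation. Unless $|\tau'|$ is constant, the lengths $|I_{\mathbf i}|$ at level $n$ spread between roughly $(\sup|\tau'|)^{-n}$ and $(\inf|\tau'|)^{-n}$, i.e.\ they differ by an exponential factor. Consequently the passage from $N_{\varepsilon_n}\lesssim\sum_{\mathbf i}\lambda^n(x_{\mathbf i})\,|I_{\mathbf i}|^{-1}$ to the pressure sum $\sum_{\mathbf i}\lambda^n(x_{\mathbf i})\,|I_{\mathbf i}|^{s-1}$ requires $\varepsilon_n\lesssim|I_{\mathbf i}|$ uniformly in $\mathbf i$ (your step ``$|I_{\mathbf i}|^{s-1}\gtrsim\varepsilon_n^{s-1}$'' in the relevant regime $s>1$, e.g.\ $s_1\in(1,2)$ for the classical Weierstrass function), and this fails by an exponentially large factor $(\varepsilon_n/|I_{\mathbf i}|)^{s}$; moreover your per-cylinder covering uses at least one $\varepsilon_n$-square per cylinder, i.e.\ at least $\ell^n$ squares, which need not be $O(\varepsilon_n^{-s})$. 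Box counting demands a cover at a \emph{single} scale $r$, so the correct device is the one the paper uses: the Moran cover with stopping time $n_r(x):=\min\{n:|I_n(x)|<r\}$, for which all covering cylinders have length in $[\delta_0 r,r)$ (this is where the $\delta_0$-pinching is actually needed), and then the Gibbs property of $\nu_1$, namely $\nu_1(I_n(x))\asymp|I_n(x)|^{s_1-1}\lambda^n(x)$, turns the count into $N_r\lesssim\sum_I\nu_1(I)\,|I|^{-s_1}\lesssim r^{-s_1}$. With that replacement your outline matches the paper's proof of the box-dimension bound.
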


\begin{proof}
Let $\ttheta$ be fixed.
We first consider the box dimension.
For $r>0$, let $N _r$ be the least number of closed squares with side length $r$ that are needed to cover $\graph W _\ttheta\cap(\J\times\R)$.
Recall that $\overline{\dim} _B(\graph W _\ttheta\cap(\J\times\R))=\limsup _{r\to 0}\frac{\log N _r}{-\log r}$ by definition.
To find the bound, we use Moran covers of $\J$, so let $\mathfrak{U} _r :=\{ I _{n _r (x)}(x):x\in\J\}$, where $n _r (x):=\min\{ n\in\N :|I _n(x)|< r\}$.
Observe that $\J\subseteq\bigcup _{I\in\mathfrak{U} _r}\overline{I} $ and $\delta _0 r\leqslant |I|<r $ for all $I\in\mathfrak{U} _r$, where $\delta _0 >0$ is the constant from Lemma \ref{lem:tau_distortion}.
In view of Lemma \ref{lem:Bedford_weak}, for each $I _n (x)\in\mathfrak{U} _r$, the part of the graph $\graph  W _\ttheta \cap (\overline{ I _n(x)}\times\R )$ can be covered by $\left\lceil\overline{C}\lambda ^n (x)/|I _n (x)|\right\rceil$ closed squares with side length $r$.
In view of the partial hyperbolicity \eqref{eq:partial_hyperbolic} and Lemma \ref{lem:tau_distortion}, this number does not exceed $(\overline{C}+D)\lambda ^n (x)/|I _n (x)|$.
Now, let $\nu _1\in\mathcal{P} _\tau (\J )$ be the equilibrium state for the topological pressure of the definition of $s _1$ in \eqref{eq:s12}, which is a Gibbs measure according to Lemma \ref{lem:eqisGibbs}.
Together with Lemma \ref{lem:tau_distortion}, there is thus a constant $C _1 >0 $ such that
\[
	C _1 ^{-1}\leqslant\frac{\nu _1 (I _n (x))}{|I _n (x)| ^{s_1 -1}\lambda ^n (x)}\leqslant C _1
\]
for all $x\in\J$ and $n\in\N$.
Hence we can conclude
\begin{eqnarray*}
	N _r	\leqslant\quad 	\sum _{I _n (x)\in\mathfrak{U} _r}(\overline{C}+D)\frac{\lambda ^n (x) }{| I _n (x) | } 
	&\leqslant	&	\sum _{I\in\mathfrak{U} _r}  (\overline{C} + D) C _1\,\nu _1 ( I ) | I  | ^{- s_1}\\
	&\leqslant	& (\overline{C} + D) C_1 (\delta _0 r ) ^{- s_1}\sum _{I\in\mathfrak{U} _r}\nu _1 ( I )\quad = (\overline{C} + D) C _1 \delta _0 ^{- s_1}\, r  ^{- s_1} 
\end{eqnarray*}
for all $ r > 0 $.
This finishes the proof for the box dimension.

We turn to the Hausdorff dimension.
As $\dim _H(\graph W _\ttheta\cap(\J\times\R))\leqslant\overline{\dim} _B(\graph W _\ttheta\cap(\J\times\R))\leqslant s _1$, it remains to show $\dim _H(\graph W _\ttheta\cap(\J\times\R))\leqslant s _2$.
To this end, we apply a general formula for local Hölder exponent that is stated as Lemma \ref{lem:SJ} in a later section.
Together with Lemma \ref{lem:Bedford_weak}, we have
\begin{equation}
	\hol _{W _\ttheta} ( x ) =\liminf _{n\to\infty}\inf _{u\in I _n (x)}\frac{\log | W _\ttheta (x) - W _\ttheta (u)|}{\log | I _n (x) |} \geqslant\liminf _{n\to\infty}\frac{\log\lambda ^n (x) }{\log  | I _n (x) |} =:\tilde{h} (x)  	\label{eq:hol_geq_holtilde_proof}
\end{equation}
for all $ x\in\J\setminus\nullset $.
We consider the sub-level sets $ E ^< _\alpha :=\{ x\in\J\setminus\nullset :\tilde{h} (x) <\alpha\} $ for $\alpha > 0 $.
The crucial fact is that
\begin{equation}	\label{eq:proof_upper_estimate}
	\dim _H E  ^<  _\alpha\leqslant s _2\,\alpha
\end{equation}
holds for all $\alpha > 0 $.
Postponing its proof, we first demonstrate how the rest of the main proof can be derived from this by means of the following very general inequality:
\[
	\dim _H\{ (x, W _\ttheta (x) ) : x\in E ^< _\alpha\mbox{ and }\hol _{W _\ttheta} (x)\geqslant\beta\}\leqslant\frac{\dim _H E ^< _\alpha}{\beta} 
\]
for any $\alpha ,\beta > 0 $, see \cite[Theorem 1]{Jin11}.
Choose a sufficiently large interval, say, $[\alpha _{\min},\alpha _{\max}+1]$.
Given $ N\in\N $, we partition this interval equally into $N$ subintervals, so let $t _i :=\alpha _{\min}+\frac{i}{N}(1+\alpha _{\max}-\alpha _{\min})$ for $i=0,\ldots ,N$.
Observe that for each $ x\in\J\setminus\nullset $ there is some $ i\in\{ 0 ,\ldots , N-1\} $ such that $ t _i\leqslant\tilde{h} (x) < t _{i+1} $, which together with \eqref{eq:hol_geq_holtilde_proof} implies that $\hol _{W _\ttheta} (x)\geqslant\tilde{h} (x)\geqslant t _i $.
Thus we have $\J\setminus\nullset =\bigcup _{i = 0} ^{N -1}E ^< _{t _{i+1}} \cap\{\hol _{W _\ttheta}\geqslant t _i\}$.
As the countable set $\nullset$ has Hausdorff dimension zero, it follows that
\begin{eqnarray*}
	\dim _H\graph (W _\ttheta\cap (\J\times\R)) 	& = &	\max _{i =0,\ldots , N-1}\dim _H\left\{ (x, W _\ttheta (x) ) : x\in E ^< _{t _{i+1}}\mbox{ and }\hol _{W _\ttheta} (x)\geqslant t _i\right\}\\
	&\leqslant &	\max _{i =0,\ldots , N-1}\frac{\dim _H E ^< _{t _{i+1}}}{t _i}\\
	&\leqslant &	\max _{i =0,\ldots , N-1}\frac{s _2\, t _{i+1}}{t _i}\quad\leqslant s _2\left( 1 +\frac{1}{\alpha _{\min} N}\right).
\end{eqnarray*}
Hence, letting $ N\to\infty $ yields the claimed upper bound $s_2$ for the Hausdorff dimension.

Finally, it remains to show the inequality \eqref{eq:proof_upper_estimate}.
Given $ r > 0 $, we can define
\[
	n _r ( x ) :=\min\left\{  n\in\N : | I _n (x) |\leqslant r\mbox{ and }| I _n (x) | ^\alpha\leqslant\sup _{u\in I_n(x)}\lambda ^n (u) \right\}  
\]
for each $ x\in E ^< _\alpha $.
Then, $\mathfrak{U} _r :=\{ I _{n _r (x)} (x) : x\in E ^< _\alpha\} $ is a family of disjoint intervals such that $ E ^< _\alpha\subseteq\bigcup _{I\in\mathfrak{U} _r}\overline{I} $ and $ | I |\leqslant r $ for all $ I\in\mathfrak{U} _r $.
Let $\nu _2\in\mathcal{P} _\tau (\J ) $ be the equilibrium state for the topological pressure of the definition of $ s _2 $ in \eqref{eq:s12}.
By Lemma \ref{lem:eqisGibbs}, there is a constant $C_2 > 0$ such that
\[
	C _2^{-1}\leqslant\frac{\nu _2 ( I _n (x) ) }{(\lambda ^n(u)) ^{s_2}}\leqslant C _2
\]
for all $u\in I_n(x)$, $ x\in\J $ and $ n\in\N $.
In particular, for each $ I _n (x)\in\mathfrak{U} _r $ we have
\[
	| I _n (x) | ^{ s _2\alpha }\leqslant \sup _{u\in I _n(x)}(\lambda ^n (u) ) ^{s _2}\leqslant C _2\,\nu _2 ( I _n (x) ).
\]
Thus, for any $ d > s_2\alpha $, the $ d $-Hausdorff measure of $ E ^< _\alpha $ is bounded as
\begin{eqnarray*}
	\mathcal{H} ^d ( E ^< _\alpha )	\leqslant\quad 	\sum _{I\in\mathfrak{U} _r} | I | ^d  & = &\sum _{I _n (x)\in\mathfrak{U} _r} | I _n (x) | ^{d - s_2\alpha} | I _n (x) | ^{s_2\alpha}\\
	&\leqslant &	 C _2\, r ^{d - s_2\alpha}\sum _{I\in\mathfrak{U} _r}\nu _2 ( I )\quad\leqslant C _2\, r ^{d - s_2\alpha} .
\end{eqnarray*}
Letting $ r\to 0 $, we obtain $\mathcal{H} ^d ( E ^< _\alpha ) = 0 $, so $\dim _H E ^< _\alpha\leqslant d $.
As $d>s_2\alpha$ was arbitrary, the claim is proved.
\end{proof}

\begin{lemma}	\label{lem:box_lower}
Given $\ttheta$, assume that there is a sequence $ ( c _n ) _{n\in\N}\subset ( 0,\infty ) $ with $\lim _{n\to\infty}\frac{\log c_n}{n} = 0 $ such that
\[
	c _n\,\lambda ^n (x)\leqslant\sup _{v\J\cap\in I _n (x)} | W _\ttheta (v ) - W _\ttheta (x)|
\]
for all $ x\in\J $ and $ n\in\N $.	
Then we have $\underline{\dim} _B(\graph W _\ttheta\cap (\J\times\R))\geqslant s _1 $.
\end{lemma}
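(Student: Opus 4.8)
The plan is to estimate the covering number $N_r$ of $\graph W_\ttheta\cap(\J\times\R)$ by closed squares of the standard $r$-mesh from below, showing that for every $\epsilon>0$ there are $r_\epsilon,c_\epsilon>0$ with $N_r\geq c_\epsilon\,r^{-s_1+\epsilon}$ for $0<r<r_\epsilon$; since $\underline{\dim}_B(\graph W_\ttheta\cap(\J\times\R))=\liminf_{r\to0}\frac{\log N_r}{-\log r}$, letting $\epsilon\to0$ then yields the assertion.

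\emph{Localisation.} I would reuse the Moran cover $\mathfrak{U}_r=\{I_{n_r(x)}(x):x\in\J\}$ from the proof of Lemma~\ref{lem:box_hausdorff_upper}; by Lemma~\ref{lem:tau_distortion} its members are pairwise disjoint subintervals of $\T$ of lengths in $[\delta_0r,r)$, so each column of the $r$-mesh meets at most $K:=\lceil1/\delta_0\rceil+1$ of them. Writing $N_r(I)$ for the number of $r$-squares meeting $\graph W_\ttheta\cap((\J\cap I)\times\R)$ and charging each such square to the $\mathfrak{U}_r$-members whose columns it occupies, one gets $N_r\geq K^{-1}\sum_{I\in\mathfrak{U}_r}N_r(I)$.

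\emph{A single localised piece.} Fix $I=I_n(x)\in\mathfrak{U}_r$. Splitting the series in \eqref{eq:W_def} at the $n$-th term gives, for $v\in I_n(x)$, the renewal identity
\[
	W_\ttheta(v)=\Phi_n(v)+\lambda^n(v)\,W_{\sigma^n\ttheta}(\tau^n v),\qquad \Phi_n:=\sum_{j=0}^{n-1}\lambda^j(\cdot)\,g(\tau^j\cdot+\vartheta_j),
\]
with $\Phi_n$ of class $C^{1+\alpha}$ on $I_n(x)$. This exhibits $\graph W_\ttheta\cap((\J\cap I_n(x))\times\R)$ as a $C^{1+\alpha}$-distorted affine image of $\graph W_{\sigma^n\ttheta}\cap(\J\times\R)$ contracted by $\asymp|I_n(x)|\asymp r$ horizontally and by $\asymp\lambda^n(x)$ vertically; by \eqref{eq:partial_hyperbolic} and Lemma~\ref{lem:tau_distortion} the vertical contraction is much the weaker one, $\lambda^n(x)\gg r$. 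So this portion of the graph lies in $O(1)$ mesh columns but, by the first point of Lemma~\ref{lem:Bedford_weak} and the hypothesis, has vertical extent between $c_n\lambda^n(x)$ and $2\overline{C}\lambda^n(x)$. The aim is to deduce $N_r(I_n(x))\gtrsim c_n\lambda^n(x)/r$, i.e. that the graph over the Cantor set $\J\cap I_n(x)$ — not merely over the whole interval $I_n(x)$ — meets a comparable number of distinct mesh rows; here one uses the hypothesis at \emph{every} level $m\geq n$, namely $\sup_{v\in\J\cap I_m(x')}|W_\ttheta(v)-W_\ttheta(x')|\geq c_m\lambda^m(x')$ for all $x'\in\J\cap I_n(x)$, together with the matching upper bound of Lemma~\ref{lem:Bedford_weak}, to stop the monotonicity subintervals of $I_n(x)$ at the first level where the vertical extent drops below $r$ and to rule out reuse of rows. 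Granting this, $N_r\geq K^{-1}\sum_{I\in\mathfrak{U}_r}N_r(I)\gtrsim r^{-1}\sum_{I_n(x)\in\mathfrak{U}_r}c_n\lambda^n(x)$.

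\emph{Summation.} Let $\nu_1\in\mathcal{P}_\tau(\J)$ be the equilibrium state for $(1-s_1)\log|\tau'|+\log\lambda$ of \eqref{eq:s12}, an atom-free Gibbs measure by Lemma~\ref{lem:eqisGibbs}; with Lemma~\ref{lem:tau_distortion} there is $C_1>0$ with $C_1^{-1}\leq\nu_1(I_n(x))\bigl/\bigl(|I_n(x)|^{s_1-1}\lambda^n(x)\bigr)\leq C_1$, hence $\lambda^n(x)\geq c'\,r^{1-s_1}\nu_1(I_n(x))$ for $I_n(x)\in\mathfrak{U}_r$, with $c'>0$ depending only on $\delta_0,s_1,C_1$. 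By Lemma~\ref{lem:tau_distortion} the levels $n_r(x)$ grow uniformly like a bounded multiple of $\log(1/r)$, so $\lim_m\frac{\log c_m}{m}=0$ gives $c_n\geq r^{\epsilon}$ uniformly over $\mathfrak{U}_r$ once $r$ is small, while $\sum_{I\in\mathfrak{U}_r}\nu_1(I)=1$ because $\nu_1$ is atom-free. Therefore $\sum_{I_n(x)\in\mathfrak{U}_r}c_n\lambda^n(x)\geq c'\,r^{1-s_1+\epsilon}$ and $N_r\gtrsim r^{\epsilon-s_1}$, whence $\underline{\dim}_B(\graph W_\ttheta\cap(\J\times\R))\geq s_1-\epsilon$ and the proof closes.

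\emph{Main obstacle.} Everything above is bookkeeping except the estimate $N_r(I_n(x))\gtrsim c_n\lambda^n(x)/r$ in the second step. When $\J=\T$ it is classical, since continuity of $W_\ttheta$ makes the vertical projection of the graph over the interval $I_n(x)$ a genuine interval of length $\asymp\lambda^n(x)$; for a genuinely fractal repeller $W_\ttheta(\J\cap I_n(x))$ need not be an interval, and the factor $\lambda^n(x)/r$ must be wrung out of the multi-scale non-degeneracy alone. A naive level-by-level descent is too wasteful — a bounded relative loss at each of the $\asymp\log(1/r)$ intermediate scales would compound into a super-polynomial error — so the covering has to be organised globally, in the spirit of Falconer's box-counting estimates for self-affine sets, so that the cumulative loss is absorbed by a single pressure/Gibbs estimate. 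I expect this to be where the real technical work lies.
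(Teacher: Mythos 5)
Your outer bookkeeping is fine and is not where the difficulty lies: the Moran cover $\mathfrak{U}_r$, the charge of each mesh square to at most $\lceil 1/\delta_0\rceil+1$ columns, the Gibbs estimate $\lambda^{n}(x)\geqslant c'\,r^{1-s_1}\nu_1(I_n(x))$, and the absorption of $c_n$ into $r^{\epsilon}$ via $n_r(x)\asymp\log(1/r)$ would indeed close the argument \emph{granting} the per-cylinder estimate $N_r(I_n(x))\gtrsim c_n\lambda^n(x)/r$. But you do not prove that estimate; you explicitly defer it (``Granting this\dots'', ``I expect this to be where the real technical work lies''), and it is the entire content of the lemma, so the proposal is a reduction rather than a proof. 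For comparison, the paper's bookkeeping is different but equally routine: it fixes $\varepsilon>0$, takes a set $D_{N_0}$ of positive $\nu_1$-measure on which the Birkhoff and Shannon--McMillan--Breiman asymptotics for $\log\lambda^n$, $\log\nu_1(I_n)$ and $\log|I_n|$ hold uniformly for $n\geqslant N_0$, bounds the number of level-$n$ cylinders meeting $D_{N_0}$ from below by the Gibbs property, works at the single scale $r_n=e^{n(-\int\log|\tau'|\,d\nu_1-\varepsilon)}$ (a lower bound for the cylinder lengths, so each square is shared by at most two cylinders), and passes from the subsequence $(r_n)$ to all $r$ by monotonicity of $N_r$; your hypothesis enters only through the vertical extent $\geqslant c_n\lambda^n(x)$ over each cylinder. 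The contested step is dispatched there in one line, ``since $W_\ttheta$ is continuous'': by the intermediate value theorem the graph over the \emph{whole interval} $I_n(x)$ sweeps a vertical interval of length $\geqslant c_n\lambda^n(x)$ and hence meets $\geqslant c_n\lambda^n(x)/r_n$ rows.

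Your instinct that this is the real issue when $\J\neq\T$ is sound, and the gap in your proposal is not one that better global (Falconer-style) bookkeeping can fill. The intermediate-value argument controls the graph over $I_n(x)$, whereas $N_r$ counts squares covering only $\graph W_\ttheta\cap(\J\times\R)$; the rows swept over the gaps $I_n(x)\setminus\J$ need not be met by the graph over $\J\cap I_n(x)$. Moreover the estimate you are hoping for can actually fail: take two affine branches of slope $A$ and constant $\lambda\in(1/A,1/2)$, so that $s_2=\log 2/\log(1/\lambda)<1$. Splitting $I_n(x)$ into its $2^k$ subcylinders of level $n+k$ and using only the \emph{upper} bound of Lemma \ref{lem:Bedford_weak}, the vertical slice $W_\ttheta(\J\cap I_n(x))$ is covered by $2^k$ intervals of length $\overline{C}\lambda^{n+k}$; choosing $k$ with $\lambda^{n+k}\approx r\approx|I_n(x)|$ yields a cover by roughly $(\lambda^n(x)/r)^{s_2}$ intervals of length $r$, which for $s_2<1$ is far fewer than $\lambda^n(x)/r$, while the lower oscillation hypothesis of the lemma is perfectly compatible with this (a Cantor-type set of diameter $c_n\lambda^n(x)$ can have tiny covering numbers). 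So the missing step cannot be a consequence of the stated hypotheses alone; it requires either $\J=\T$ (Bedford's setting, where continuity really does give the row count) or some additional input about the thickness of the slices $W_\ttheta(\J\cap I_n(x))$ --- an input that neither your sketch supplies nor, as far as the written one-line appeal to continuity goes, the paper's own proof makes explicit. In short: genuine gap at the central estimate, and the obstacle you identified is precisely where the argument is thinnest.
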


\begin{proof}
Let $\nu _1\in\mathcal{P} _\tau (\J ) $ and $ C _1 > 0 $ be as in proof of Lemma \ref{lem:box_hausdorff_upper}, so we have
\[
	C _1 ^{-1}\leqslant\frac{\nu _1 ( I _n (x) ) }{| I _n (x) | ^{s_1 -1}\lambda ^n (x)}\leqslant C _1 
\]
for all $ x\in\J $ and $ n\in\N $.
In addition, we again consider $N_r$ for $r>0$, the least number of closed squares with side length $r$ that are needed to cover $\graph W _\ttheta\cap(\J\times\R)$.
Recall that $\underline{\dim} _B(\graph W _\ttheta\cap(\J\times\R))=\liminf _{r\to 0}\frac{\log N _r}{-\log r}$ by definition.
Now, for an arbitrary $\varepsilon >0$, let $D _N\subseteq\T$ denote the set of those $ x\in\J $ which satisfy
\[
	\max\left\{\left|\frac{\log\lambda ^n ( x )}{n}  -\int\log\lambda\, d\nu _1\right|, \left|\frac{-\log\nu _1 ( I _n ( x ) )}{n} - h _\tau (\nu _1 )\right|, \left|\frac{-\log | I _n (x) |}{n}  -\int\log  |\tau ' |\, d\nu _1\right|\right\}<\varepsilon
\]
for all $n\geqslant N$.
Observe that $\lim _{N\to\infty}\nu _1 ( D _N ) = 1 $ in view of Birkhoff's ergodic theorem and Shannon–McMillan–Breiman theorem together with Lemma \ref{lem:tau_distortion}.
Hence we can choose a number $ N _0\in\N $ so that $\nu _1 ( D _{N_0} ) > 0 $.
Let $\mathfrak{U} _n :=\{ I _n (x) : x\in D _{N_0}\} $ for $ n\geqslant N _0 $.
Clearly, these are coverings of $D _{N_0}$.
Since
\[
	C _1\geqslant\frac{\nu _1 ( I _n (x) ) }{| I _n (x) | ^{s _1 - 1}\lambda ^n (x)}\geqslant\nu ( I _n (x) )\, e ^{ n ( - (1 - s_1)\int\log |\tau ' |\, d\nu _1 -\log\lambda\, d\nu _1 -s_1\varepsilon) } 
\]
for each $ I _n(x)\in\mathfrak{U} _n$, we have
\begin{eqnarray*}
	\#\mathfrak{U} _n 	&\geqslant &	C _1 ^{-1}\sum _{I\in\mathfrak{U} _n} e ^{n ( - (1 - s_1)\int\log |\tau ' |\, d\nu _1 -\log\lambda\, d\nu _1 -s_1\varepsilon)}\nu _1 ( I )\\
	&\geqslant &	 C _1 ^{-1}\nu _1 ( D _{N_0} )\, e ^{n ( - (1 - s_1)\int\log |\tau ' |\, d\nu _1 -\log\lambda\, d\nu _1 -s_1\varepsilon)} 
\end{eqnarray*}
for all $ n\in\N $.
On the other hand, in view of the choice of $(c_n)_n$, the hight of $\graph  W _\ttheta\cap ( I\times\R ) $ for $ I\in\mathfrak{U} _n $ is at least $ e ^{n (\int\log\lambda\, d\nu _1 -\varepsilon ) +\log c _n} $.
Let $ r _n := e ^{n ( -\int\log |\tau ' |\, d\nu _1 -\varepsilon) }$ so that $| I |\geqslant r _n $ for all $I\in\mathfrak{U} _n $.
Since $W_\ttheta$ is continuous, we have
\[
	N _{r _n}\geqslant\frac{\#\mathfrak{U} _n}{2}\,\frac{e ^{n (\int\log\lambda\, d\nu _1 -\varepsilon )+\log c _n}}{r _n}\geqslant (2C _1) ^{-1}\nu _1 ( D _{N_0} )\, e ^{n ( s _1\int\log |\tau ' |\, d\nu _1 - s _1\varepsilon )+\log c _n} .
\]
Consequently,
\[
	\liminf _{r\to 0}\frac{\log N _r}{-\log r } =\liminf _{n\to\infty}\frac{\log N _{r _n}}{-\log r _n}\geqslant\frac{s _1\int\log |\tau ' |\, d\nu _1 - s _1\varepsilon}{\int\log |\tau ' |\, d\nu _1  +\varepsilon},
\]
where the above equality is due to monotonicity of the sequence $ ( N _r ) _{r > 0} $ and the fact $\lim _{n\to\infty}\frac{\log r _n}{\log r _{n+1}} = 1 $.
Letting $\varepsilon\to 0 $ finishes the proof.
\end{proof}

\begin{lemma}	\label{lem:lift}
Suppose that $\ttheta\in\T ^{\N _0} $ is an independent and identically uniformly distributed random sequence on $\T$ and that $\tau _{|\J}$ is expansive.
Furthermore, suppose that $ g $ satisfies the critical point hypothesis.
Then, for any Gibbs measure $\nu\in\mathcal{P} _\tau (\J ) $, we have
\[
	\dim _H\mu _\ttheta  =\min\left\{ \dim _H\nu + 1 +\frac{\int\log\lambda\, d\nu}{\int\log |\tau ' |\, d\nu} ,\frac{h _\tau (\nu )}{\int\log |\tau ' |\, d\nu}\right\} 
\]
for almost all $\ttheta$.
\end{lemma}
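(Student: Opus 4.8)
Throughout, $\mu_\ttheta:=(\mathrm{id}_\T,W_\ttheta)_\ast\nu$ denotes the lift of $\nu$, i.e.\ the image of $\nu$ under $x\mapsto(x,W_\ttheta(x))$; it is a Borel probability measure carried by $\graph W_\ttheta\cap(\J\times\R)$. Put $\alpha_\nu:=-\!\int\!\log\lambda\,d\nu/\!\int\!\log|\tau'|\,d\nu$, which is the $\nu$-a.e.\ value of $\hol_{W_\ttheta}$ for almost every $\ttheta$, by \eqref{eq:hol_geq_holtilde_proof} together with the randomised third point of Lemma~\ref{lem:Bedford_weak} ($W_\ttheta$ being almost surely non-degenerate under the present hypotheses). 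Since $\nu$ is ergodic (Lemma~\ref{lem:eqisGibbs}), the assertion will follow once we show that, for almost every $\ttheta$ and $\nu$-a.e.\ $x$, the limit
\[
	\lim_{r\to0}\frac{\log\mu_\ttheta\bigl(B_r(x,W_\ttheta(x))\bigr)}{\log r}\ =\ \min\Bigl\{\,\dim_H\nu+1-\alpha_\nu,\ \tfrac{\dim_H\nu}{\alpha_\nu}\,\Bigr\}
\]
exists: by Lemma~\ref{lem:Barreira_cite} this common value is then $\dim_H\mu_\ttheta$, and it becomes the displayed expression after inserting the exact-dimensionality identity $\dim_H\nu=h_\tau(\nu)/\!\int\!\log|\tau'|\,d\nu$ for the Gibbs measure $\nu$ on the conformal repeller $\J$.

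For $\nu$-a.e.\ $x$ one has, by Lemma~\ref{lem:tau_distortion}, Birkhoff's theorem and the Shannon--McMillan--Breiman theorem, $\tfrac1n\log|I_n(x)|\to-\!\int\!\log|\tau'|\,d\nu$, $\tfrac1n\log\lambda^n(x)\to\!\int\!\log\lambda\,d\nu$ and $\tfrac1n\log\nu(I_n(x))\to-h_\tau(\nu)$. Fix such an $x$, and for small $r>0$ let $n=n_r$, $m=m_r$ be determined by $|I_n(x)|\le r<|I_{n-1}(x)|$ and $\overline C\lambda^m(x)\le r<\overline C\lambda^{m-1}(x)$, with $\overline C$ as in Lemma~\ref{lem:Bedford_weak}; then $n\le m$ by \eqref{eq:partial_hyperbolic}, $n\sim\log(1/r)/\!\int\!\log|\tau'|\,d\nu$ and $m/n\to\int\!\log|\tau'|\,d\nu/(-\!\int\!\log\lambda\,d\nu)$. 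By Lemma~\ref{lem:tau_distortion} the slice $\{u\in\J:d(x,u)\le r\}$ lies in the union $\widehat I_n(x)$ of $I_n(x)$ and at most two adjacent level-$n$ cylinders (harmless at the exponential scale for $\nu$-typical $x$), and by the first point of Lemma~\ref{lem:Bedford_weak} one has $W_\ttheta(I_m(x))\subseteq[W_\ttheta(x)-r,W_\ttheta(x)+r]$. Hence
\[
	\nu(I_m(x))\ \le\ \mu_\ttheta\bigl(B_r(x,W_\ttheta(x))\bigr)\ \le\ \nu\bigl(\{u\in\widehat I_n(x):|W_\ttheta(u)-W_\ttheta(x)|\le r\}\bigr).
\]
The left-hand inequality is deterministic; combined with the displayed asymptotics (equivalently, with the uniform Hölder bound $\mathrm{osc}_{I_n(x)}W_\ttheta\le\overline C\lambda^n(x)$ and the exact-dimensionality of $\nu$), it already yields $\underline d_{\mu_\ttheta}(x,W_\ttheta(x))\le\dim_H\nu/\alpha_\nu$.

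Everything therefore hinges on the $\nu$-measure of the part of $\widehat I_n(x)$ whose $W_\ttheta$-value lands in a window of relative width $\rho:=r/\lambda^n(x)$. Pre-composing with the inverse branch $\rho_{[x]_n}$ and using Lemma~\ref{lem:tau_distortion} to control distortion, $W_\ttheta|_{I_n(x)}$ becomes---up to a bounded affine change of variables and a $C^{1+\alpha}$ term of slope $O(\lambda^n(x))$, exactly as in the proof of Lemma~\ref{lem:smooth_part}---a dilate by a factor $\asymp\lambda^n(x)$ of a copy of $W_{\sigma^n\ttheta}$, while the normalised restriction of $\nu$ to $I_n(x)$ is comparable to a fixed Gibbs reference measure $\widetilde\nu$ on $[0,1]$. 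The problem thus reduces to a scale-invariant statement about the random function $W_{\sigma^n\ttheta}$: almost surely, and uniformly over $n$, over level-$n$ cylinders, and over $r$,
\[
	\nu\bigl(\{u\in I_n(x):|W_\ttheta(u)-W_\ttheta(x)|\le r\}\bigr)\ =\ e^{o(n)}\,\nu(I_n(x))\,\rho^{\,\min\{1,\,\dim_H\nu/\alpha_\nu\}},
\]
which says that the push-forward of $\widetilde\nu$ under $W_{\sigma^n\ttheta}$ has, at its typical points, local dimension exactly $\min\{1,\dim_H\nu/\alpha_\nu\}$ and no heavier concentration. The upper bound here (no concentration) and, when $\dim_H\nu\ge\alpha_\nu$, the matching lower bound (an absolute continuity with density bounded below) are the genuinely probabilistic content and where the critical point hypothesis is indispensable; following \cite{Hunt98} and \cite{Moss12} they come from (i) Fourier/van-der-Corput estimates quantifying the non-flatness of the relevant combinations of $g$, which give the bound in expectation; (ii) a concentration/large-deviation step using the independence of the coordinates $\theta_k$ over disjoint sub-cylinders to pass from expectation to an almost sure bound; and (iii) a Borel--Cantelli argument over the countable family of cylinders and dyadic scales, combined with continuity of $W_\ttheta$ and Lemma~\ref{lem:tau_distortion}, to make it uniform in $r$. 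I expect this step---keeping the expectation estimate sharp and upgrading it to an almost sure, scale-uniform statement---to be the principal obstacle. Granting it, and noting $\nu(I_n(x))\rho^{\min\{1,\dim_H\nu/\alpha_\nu\}}=e^{o(n)}\max\{\nu(I_m(x)),\nu(I_n(x))\rho\}$, we get $\mu_\ttheta(B_r(x,W_\ttheta(x)))=e^{\pm o(n)}\max\{\nu(I_m(x)),\nu(I_n(x))\rho\}$, so that $\log\mu_\ttheta(B_r(x,W_\ttheta(x)))/\log r\to\min\{\dim_H\nu/\alpha_\nu,\,\dim_H\nu+1-\alpha_\nu\}$ as $r\to0$ by the asymptotics above (the factors $e^{\pm o(n)}$ being negligible since $n=O(\log(1/r))$). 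Finally Fubini furnishes a full-measure set of $\ttheta$ for which this limit holds at $\nu$-a.e.\ $x$; for such $\ttheta$ the lower pointwise dimension of $\mu_\ttheta$ is $\mu_\ttheta$-a.e.\ equal to that constant, and Lemma~\ref{lem:Barreira_cite} concludes the proof.
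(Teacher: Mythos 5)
Your outline stalls exactly where the real work of this lemma lies. The two-scale reduction (choosing $n$ with $|I_n(x)|\approx r$ and $m$ with $\lambda^m(x)\approx r$, and the deterministic inclusion $\nu(I_m(x))\leqslant\mu_\ttheta(B_r(x,W_\ttheta(x)))$) is fine, but everything after that rests on the displayed claim that, almost surely and uniformly over $n$, over all level-$n$ cylinders and over all $r$, one has $\nu\{u\in I_n(x):|W_\ttheta(u)-W_\ttheta(x)|\leqslant r\}=e^{o(n)}\nu(I_n(x))\rho^{\min\{1,\dim_H\nu/\alpha_\nu\}}$. You state this, label it the principal obstacle, and then "grant" it; but this statement \emph{is} the lemma (indeed more than the lemma: a two-sided, exceptional-set-free, exact local-dimension statement for the sliced measures, which is neither proved in the literature you cite nor needed). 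The three-item recipe (van der Corput non-flatness, concentration over independent coordinates, Borel--Cantelli over cylinders and dyadic scales) is not a proof: to beat a union bound over the exponentially many level-$n$ cylinders you would need failure probabilities decaying faster than the cylinder count, and no such quantitative estimate is given; what the critical point hypothesis actually yields (\cite[Lemma 4.2]{Moss12}, here Lemma~\ref{lem:moss}) is only a bound $\|h_{x,v}\|_\infty\leqslant C_h/\lambda^n(x)$ on the density of $W_\ttheta(x)-W_\ttheta(v)$ for each fixed pair $(x,v)$, i.e.\ an input that controls expectations, not almost-sure scale-uniform statements. Moreover the matching lower bound in your claim (the ``density bounded below'' half, which you need even for the \emph{upper} dimension bound $\dim_H\nu+1-\alpha_\nu$, since your only unconditional lower bound on $\mu_\ttheta(B_r)$ is $\nu(I_m(x))$) is the hardest and most dubious part, and it is superfluous: that upper bound is purely deterministic.

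The paper's proof is organised precisely so as to avoid any such almost-sure uniformity. The upper bound $\min\{\dim_H\nu+1+\int\log\lambda\,d\nu/\int\log|\tau'|\,d\nu,\ h_\tau(\nu)/(-\int\log\lambda\,d\nu)\}$ is proved for \emph{every} $\ttheta$ in Lemma~\ref{lem:upper_bound} by an explicit covering of the graph over the good sets $D_N$ (using only the first point of Lemma~\ref{lem:Bedford_weak} and Lemma~\ref{lem:tau_distortion}) together with a direct estimate of the lower pointwise dimension. The lower bound is obtained by the potential-theoretic method: one restricts $\nu$ to the good set $\mathcal C_\varepsilon$ (Lemma~\ref{lem:C_full}), bounds the \emph{expected} $s$-energy $\int I_s(\mu_{\ttheta,\varepsilon})\,d\ttheta$ via Fubini, the Bowen-ball decomposition $B_{\delexp}(x)=\bigcup_n\bigl(B_{n,\delexp}(x)\setminus B_{n+1,\delexp}(x)\bigr)$ and the single density bound of Lemma~\ref{lem:moss} (this is the only place where the critical point hypothesis and expansivity enter), and then converts finite energy into the pointwise-dimension bound via Lemmas~\ref{lem:potential_helper} and~\ref{lem:Barreira_cite}; the two regimes $-\int\log\lambda\,d\nu<h_\tau(\nu)$ and $\geqslant h_\tau(\nu)$ are handled separately in Lemmas~\ref{lem:lower1} and~\ref{lem:lower2} (the latter with a Jensen trick for $s<1$). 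If you want to keep your local-dimension route, you must actually prove your uniform claim (including a treatment of exceptional cylinders, where it will fail as stated); otherwise the argument should be recast in this averaged form, where the only probabilistic ingredient is the fixed-pair density bound.
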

\begin{proof}
The proof will be given in Section \ref{sec:lift}.
\end{proof}

\begin{remark}
In proof of \cite[Proposition 2.3]{Moss12}, in order to establish a lower bound of $\dim _H\graph W _\ttheta  $ in case $\J =\T $ and $\tau ' > 0 $, the authors actually proved the following:
The Hausdorff dimension of the lift of a Gibbs measure $\nu _\phi $ for a potential $\phi $ is almost surely bounded from below by the number $s$ determined by the equation
\[
	P ( (s-1)\log |\tau' | + (\phi - P(\phi) ) -\log\lambda  ) = 0  .
\]
This approach yields the sharp lower bound of the dimension of the lifted measure for the specific choice $\phi = ( 1 - s )\log |\tau ' | +\log\lambda $, the one they needed.
For any other $\phi $, however, this is rarely the case as pointed out in \cite[Remark 5]{Jin11}.
Indeed, the precise dimension is shown in Lemma \ref{lem:lift}.
\end{remark}

\section{Hölder exponent spectra}
In this section, we study the spectra of the local Hölder exponent of $W_\ttheta$, giving proofs of Theorems \ref{thm:dimE} and \ref{thm:random_spectrum}.

Firstly, we introduce two useful formulas for the local Hölder exponent.
The definition of the exponent for $W_\ttheta$ was given in \eqref{eq:def_holder}, which can be naturally applied for any continuous function $\phi :\T\to\R$, too.
Let $B_r(x):=\{u\in\T :d(x,u)\leqslant r\}$ be closed balls for $x\in\T$ and $r>0$.

\begin{lemma}	\label{lem:SJ}
For any continuous function $\phi : \T\to\R $, we have
\[
	\hol _\phi (x) =\liminf _{r\searrow 0}\inf _{u\in B_r(x)}\frac{\log |\phi (x) -\phi (u)|}{\log r} 
\]
for all $ x\in\T $.

In addition, we have
\[
	\hol _\phi ( x ) =\liminf _{n\to\infty}\inf _{u\in I _n (x)}\frac{\log |\phi (x) -\phi (u)|}{\log | I _n (x) |} 
\]
for all $ x\in\J\setminus\nullset $.
\end{lemma}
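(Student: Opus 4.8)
The plan is to unwind the definition of $\hol_\phi(x)$ and then move back and forth between balls $B_r(x)$ and monotonicity intervals $I_n(x)$.

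\emph{First formula.} Recall that $\hol_\phi(x)$ is the supremum of those $\alpha\in(0,1)$ for which $|\phi(x)-\phi(u)|\leqslant C\,d(x,u)^\alpha$ holds for $u$ near $x$, with some finite $C$. If $\phi$ is $\alpha$-Hölder at $x$ in this sense, then for small $r$ and $u\in B_r(x)$ with $\phi(u)\neq\phi(x)$ we have $\log|\phi(x)-\phi(u)|\leqslant\log C+\alpha\log r$; dividing by $\log r<0$ and letting $r\searrow 0$ shows that the right-hand side of the first formula is $\geqslant\alpha$, and taking the supremum over admissible $\alpha$ gives one inequality. Conversely, if the right-hand side exceeds some $\alpha\in(0,1)$, choose $\beta\in(\alpha,1)$ below the right-hand side; then for all small $r$ and all $u\in B_r(x)$ we get $\log|\phi(x)-\phi(u)|<\beta\log r$, i.e. $|\phi(x)-\phi(u)|<r^\beta$, and taking $r=d(x,u)$ shows $\phi$ is $\beta$-Hölder at $x$, whence $\hol_\phi(x)\geqslant\beta$; letting $\beta$ increase closes this direction. (The identity as written holds whenever the right-hand side is $\leqslant 1$; if it exceeds $1$ then $\phi$ is differentiable at $x$ with vanishing derivative and $\hol_\phi(x)=1$, a case which will not arise for the $W_\ttheta$ on $\J\setminus\nullset$ to which we apply the lemma, since \eqref{eq:partial_hyperbolic} forces the right-hand side strictly below $1$ there.)

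\emph{Second formula, easy inequality.} By Lemma \ref{lem:tau_distortion} the lengths $|I_n(x)|$ decrease to $0$ with $\delta_0\leqslant|I_{n+1}(x)|/|I_n(x)|\leqslant 1$, so they form a cofinal, logarithmically $\delta_0$-dense family of scales, and $I_n(x)\subseteq\overline{B_{|I_n(x)|}(x)}$. Comparing $\sup_{u\in I_n(x)}|\phi(x)-\phi(u)|$ with $\sup_{u\in B_{|I_n(x)|}(x)}|\phi(x)-\phi(u)|$, taking logarithms over $\log|I_n(x)|<0$ and passing to the $\liminf$ along $r_n:=|I_n(x)|$, the first formula gives that the right-hand side of the second formula is $\geqslant\hol_\phi(x)$.

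\emph{Second formula, hard inequality.} This is the core: one must bound the oscillation of $\phi$ over an arbitrary small ball $B_r(x)$ by oscillations over monotonicity intervals of size $\lesssim r$. The obstruction is that $x$ need not sit well inside $I_n(x)$; by Lemma \ref{lem:tau_distortion}, $\dist(x,\partial I_{n+1}(x))\asymp|I_n(x)|\cdot\dist(\tau^n x,\partial I_{\kappa(\tau^n x)})$ while $|I_{n+1}(x)|\asymp|I_n(x)|\cdot|I_{\kappa(\tau^n x)}|$, so $x$ is close to $\partial I_{n+1}(x)$ precisely when $\tau^{n+1}x$ is close to $\{0,1\}$. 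Here $x\in\J\setminus\nullset$ is used: since $\lambda<1$, \eqref{eq:partial_hyperbolic} forces $|\tau'|>1$ on $\J$, so $\tau_{|\J}$ is uniformly expanding, the points of $\{0,1\}\cap\J$ are repelling, and the orbit of $x$ cannot converge to them; hence every excursion of the orbit toward $\{0,1\}$ is finite and, throughout it, the cylinders $I_{n+1}(x),I_{n+2}(x),\dots$ share a common endpoint $p$ (with $\tau^{n+1}p\in\{0,1\}$), the excursion ending when the cylinder has shrunk to size comparable with $\dist(x,\partial I_{n+1}(x))$. Given $u\in B_r(x)$, I would let $n$ be maximal with $u\in\overline{I_n(x)}$, take $p$ the separating endpoint between $x$ and $u$, split $|\phi(x)-\phi(u)|\leqslant|\phi(x)-\phi(p)|+|\phi(p)-\phi(u)|$, and bound each summand by an oscillation of $\phi$ over a cylinder of $x$, resp.\ of $u$, of size $\lesssim r$, feeding in $\sup_{v\in I_k(y)}|\phi(y)-\phi(v)|\leqslant|I_k(y)|^{\,\theta_y-\varepsilon}$ for large $k$, where $\theta_y$ denotes the relevant $\liminf$. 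I expect this boundary bookkeeping — identifying $p$, matching the cylinder scale to $r$, and combining the two contributions to recover the sharp exponent — to be the main obstacle; in the complementary ``good'' case, where $x$ is already well inside $I_n(x)$, a single inclusion $B_{cr}(x)\subseteq I_n(x)$ of comparable scales suffices.
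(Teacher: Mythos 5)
Your handling of the first formula, and of the easy inequality in the second one (the inclusion $I_n(x)\subseteq B_{|I_n(x)|}(x)$ together with the logarithmic density of the scales $|I_n(x)|$ from Lemma \ref{lem:tau_distortion}), is correct and essentially the paper's argument; the parenthetical caveat about values exceeding $1$ is harmless. The genuine gap is in the hard inequality, and it is not just bookkeeping. You split $|\phi(x)-\phi(u)|\leqslant|\phi(x)-\phi(p)|+|\phi(p)-\phi(u)|$ at the separating endpoint $p$ and propose to control the second term by ``$\sup_{v\in I_k(y)}|\phi(y)-\phi(v)|\leqslant|I_k(y)|^{\theta_y-\varepsilon}$'' with $y=u$ or $y=p$. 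But the lemma is a statement about the exponent at $x$ alone: for a merely continuous $\phi$ nothing relates $\theta_u$ or $\theta_p$ to the cylinder liminf at $x$ (they may be far smaller), the threshold ``for large $k$'' depends on the base point and gives no uniformity as $u\to x$, and $p\in\nullset$, where no cylinder formula is even asserted. So the term $|\phi(p)-\phi(u)|$ is not controlled by anything you are allowed to use. Indeed the intermediate goal you set yourself --- bounding the oscillation of an arbitrary continuous $\phi$ over \emph{every} small ball $B_r(x)$ by oscillations over cylinders of $x$ of comparable size --- asks for more than $x\notin\nullset$ provides: that hypothesis only prevents $x$ from lying \emph{on} a cylinder endpoint and yields infinitely many ``deep'' scales; it does not prevent $x$ from coming extremely close to the endpoint $p$ of $I_{n+1}(x)$ at other scales, and a bump of $\phi$ placed just across such a $p$, at distance $\ll|I_{n+1}(x)|$ from $x$, affects the ball quantity at scale $r\approx\dist(\{p\},x)$ while leaving all cylinder oscillations at $x$ of comparable size untouched --- exactly the configuration in which your decomposition has no handle on $|\phi(p)-\phi(u)|$. (In the application, the missing control comes from Lemma \ref{lem:Bedford_weak}, which bounds the oscillation of $W_\ttheta$ over \emph{every} cylinder, not only those containing $x$; but that is extra structure, not continuity.)

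The paper's proof proceeds along a different and weaker route, which is the ingredient your sketch lacks. It shows, with a uniform $\delta>0$ built from the constant $D$ of Lemma \ref{lem:tau_distortion}, that if $\dist(\partial I_n(x),\{x\})<\delta|I_n(x)|$ held for all large $n$, then the orbit would be trapped in the extreme intervals and $\tau^{N+1}x\in\{0,1\}$ for some $N$, contradicting $x\notin\nullset$; hence for infinitely many $n$ one has the two-sided inclusion $B_{\delta|I_n(x)|}(x)\subseteq I_n(x)\subseteq B_{|I_n(x)|}(x)$. Along these good scales it squeezes the cylinder quantity between two ball quantities at comparable scales, both of whose limits inferior are identified with $\hol_\phi(x)$ via the first formula; no oscillation bound over balls at all scales is ever attempted (and even this squeeze is delicate, since the right-hand comparison is only available along a subsequence of $n$). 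To complete your argument you would have to abandon the endpoint decomposition and the appeal to exponents at auxiliary base points, and instead run this ``infinitely many good scales'' squeeze.
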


\begin{proof}
We start with the first claim.
Fix $ x\in\T $, and let $h_x$ denote the value on the right hand side of the equation.
Let $\varepsilon >0$ be arbitrary.
Then we can choose a zero sequence $(r _k)_k\subset (0,1)$ so that
\[
 	\inf _{u\in B _{r_k} (x)}\frac{\log |\phi (x) -\phi (u)|}{\log r _k} <h_x+\varepsilon .
\]
In view of continuity, for each $k$, there is a $u_k\in B _{r_k}(x)$ such that $|\phi (x)-\phi (u_k)|>r_k^{h _x+\varepsilon}\geqslant d(x,u_k)^{h _x+\varepsilon}$.
Therefore,
\[
	\sup _{u\in B_{r_k}(x)}\frac{|\phi(x) -\phi(u)|}{d(x,u)^{h_x+2\varepsilon}}\geqslant\frac{|\phi (x)-\phi (u_k)|}{d(x,u_k)^{h_x+2\varepsilon}}\geqslant d(x,u_k)^{-\varepsilon}\geqslant r_k^{-\varepsilon}.
\]
Thus, $\inf _{r>0}\sup _{u\in B_r(x)}\frac{|\phi (x) -\phi (u)|}{d(x,u)^{h_x+2\varepsilon}} = \lim _{k\to\infty}\sup _{u\in B_{r_k}(x)}\frac{|\phi(x) -\phi(u)|}{d(x,u)^{h_x+2\varepsilon}}=\infty$, so $\hol _\phi(x) \leqslant h_x+2\varepsilon$.
On the other hand, there is some $r _0\in (0,1)$ such that $\sup _{u\in B_r(x)}|\phi (x)-\phi (u)|<r^{h_x-\varepsilon}$ for all $r\in (0,r_0)$.
Hence we have $\hol _\phi (x)\geqslant h_x-\varepsilon$, since
\[
	\inf _{r>0}\sup _{u\in B_r(x)}\frac{|\phi (x) -\phi (u)|}{d(x,u)^{h _x-\varepsilon}}\leqslant\sup _{u\in B_{r _0}(x)}\frac{\sup _{v\in B_{d(x,u)}(x)}|\phi (x) -\phi (v)|}{d(x,u)^{h_x-\varepsilon}}\leqslant 1.
\]
Letting $\varepsilon\to 0$ finishes the proof of the first claim.

Let $I_L$ and $I_R$ be the leftmost and rightmost intervals defined as follows.
$I_L:=I_0$ if $0\in\overline{I _0}$, and otherwise $I_L$ is the closed interval from $0$ to the left endpoint of $I_0$. Similarly, $I_R:=I_{\ell -1}$ if $1\in\overline{I _{\ell -1}}$, and otherwise from the right endpoint of $I _{\ell -1}$ to $1$.
Recall that the distance between a set and a point is defined as $\dist ( A , x ) :=\inf\{ d(x,u) : u\in A\} $ for $ A\subseteq\T $ and $ x\in\T$.
With the constant $D>0$ from Lemma \ref{lem:tau_distortion}, let $\delta := D ^{-1}\min\{ |I _L| , |I _R|\}$, so that holds the implication:
\[
	\dist (\partial ( I _n (x) ) ,\{x\} ) <\delta | I _n (x) |\quad\Longrightarrow\quad\tau ^n x\in I _L\cup I _R.
\]
To see this, let $u\in\partial I_n(x)$ satisfy $d(u,x)<\delta |I_n(x)|$. If $x\in\{0,1\}$, the claim is trivial.
Otherwise, by the mean value theorem,
\[
	d(\tau ^n x,\tau ^n u)\leqslant\sup _{w\in I_n(x)}|(\tau ^n)'(w)|\cdot d(x,u)<\frac{D}{|I_n(x)|}\cdot \delta |I_n(x)|\leqslant \min\{|I_L|,|I_R|\}.
\]
As $\tau ^n u\in\{0,1\}$, this means $\tau ^n x\in I _L\cup I _R$, so the claimed implication is shown.

Using the above constant $\delta$, let
\[
	\J _0 :=\bigcap _{N\in\mathbb{N}}\bigcup _{ n\geqslant N}\left\{ x\in\J :\dist (\partial ( I _n (x) ) ,\{x\} )\geqslant\delta | I _n (x) |\right\} .
\]
We claim $\J\setminus\nullset\subseteq\J _0 $.
Observe that it is equivalent to show $\J\setminus\J _0\subseteq\nullset $, so let $ x\in\J\setminus\J _0 $.
Then, there is some $ N\in\N $ such that 
\[
	\dist (\partial ( I _n (x) ) ,\{x\} ) <\delta | I _n (x) | 
\]
for all $ n\geqslant N $.
As shown above, this means $\tau ^n x\in I _L\cup I _R $ for all $n\geqslant N$, which is only possible when $\tau ^{N+1} x\in\{ 0, 1\} $.
Thus, $ x\in\nullset $.

We proceed the proof of the lemma.
Let $x\in\J\setminus\nullset$.
Since $ x\in\J _0 $, the inequalities
\[
	\inf _{u\in B _{ | I _n (x) | }(x)}\frac{\log |\phi (x) -\phi (u) |}{\log | I _n (x) |}\leqslant\inf _{u\in I _n (x)}\frac{\log |\phi (x) -\phi (u) |}{\log | I _n (x) |}\leqslant\inf _{ u\in B _{\delta | I _n (x) | } (x)}\frac{\log |\phi (x) -\phi (u) |}{\log | I _n (x) |}
\]
hold for infinitely many $ n\in\mathbb{N}$.
In addition, observe that the first and thirds expressions have the same limits inferiors as $n\to\infty$, so the value must coincide with the limit inferior of the middle one.
In view of $ | I _n (x) |\searrow 0 $ and $\lim _{n\to\infty}\frac{\log | I _n (x) |}{\log | I _{n+1} (x) |} = 1 $, the value can be calculated as
\[
	\liminf _{n\to\infty}\inf _{u\in  B _{| I _n (x) |} (x)}\frac{\log |\phi (x) -\phi (u) |}{\log |I _n (x)|}	= \liminf _{r\searrow 0}\inf _{u\in B _r (x)}\frac{\log |\phi (x) -\phi (u) |}{\log r} = \hol _\phi (x).
\]
\end{proof}

The following is an immediate corollary of the preceding lemma with the specific choice $\phi=W_\ttheta$, in view of Lemmas \ref{lem:tau_distortion} and \ref{lem:Bedford_weak}.

\begin{lemma}	\label{lem:hol_symbolic}
If $ W $ is non-degenerate, we have
\[
	\hol _W ( x ) =\liminf _{n\to\infty}\frac{-\log\lambda ^n (x) }{\log  |\tau ' | ^n} 
\]
for all $ x\in\J\setminus\nullset $.

In addition, if $ W _\ttheta $ is non-degenerate for almost all $\ttheta $, then the same formula for $W_\ttheta$ is true for almost all $\ttheta$.
\end{lemma}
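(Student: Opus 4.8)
The plan is to specialise the second formula of Lemma~\ref{lem:SJ} to $\phi=W$ (respectively $\phi=W_\ttheta$) and to replace the increment $|W(x)-W(u)|$ by the sharp two-sided bound supplied by Lemma~\ref{lem:Bedford_weak}, absorbing the resulting error terms with the distortion estimates of Lemma~\ref{lem:tau_distortion}.

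First I would fix $x\in\J\setminus\nullset$ and, by Lemma~\ref{lem:SJ}, write $\hol_W(x)=\liminf_{n\to\infty}\inf_{u\in I_n(x)}\frac{\log|W(x)-W(u)|}{\log|I_n(x)|}$. Since $|I_n(x)|\to 0$, eventually $\log|I_n(x)|<0$, so dividing by it reverses the order and the inner infimum equals $\frac{\log M_n(x)}{\log|I_n(x)|}$ with $M_n(x):=\sup_{u\in I_n(x)}|W(x)-W(u)|$; the points $u$ with $W(u)=W(x)$ contribute $+\infty$ and do not affect the infimum. Combining the first two points of Lemma~\ref{lem:Bedford_weak} --- here the non-degeneracy hypothesis enters, and it also forces $M_n(x)>0$ --- yields constants $0<c\le\overline C$ with $c\,\lambda^n(x)\le M_n(x)\le\overline C\,\lambda^n(x)$, whence $\log M_n(x)=\log\lambda^n(x)+O(1)$ with error bounded by the fixed constant $\max\{|\log c|,|\log\overline C|\}$. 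Likewise Lemma~\ref{lem:tau_distortion} gives $\log|I_n(x)|=-\log|\tau'|^n+O(1)$ with error at most $\log D$, using that $|\tau'|^n=|(\tau^n)'(x)|$ by the chain rule.

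It then remains to check that these two bounded perturbations are invisible in the $\liminf$. By partial hyperbolicity~\eqref{eq:partial_hyperbolic} the quantity $\log|\tau'|^n$ grows at least linearly in $n$ and moreover dominates $|\log\lambda^n(x)|$, so the ratio $\frac{\log\lambda^n(x)}{-\log|\tau'|^n}$ stays bounded; a short computation then shows $\frac{\log\lambda^n(x)+O(1)}{-\log|\tau'|^n+O(1)}-\frac{\log\lambda^n(x)}{-\log|\tau'|^n}\to 0$, and passing to the $\liminf$ gives $\hol_W(x)=\liminf_{n\to\infty}\frac{-\log\lambda^n(x)}{\log|\tau'|^n}$. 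For the randomised statement I would run the same argument with the third point of Lemma~\ref{lem:Bedford_weak}, so that the lower constant becomes $c(\sigma^n\ttheta)$ with $\frac{\log c(\sigma^n\ttheta)}{n}\to 0$ almost surely: the numerator error is then only $o(n)$ instead of $O(1)$, which is still negligible against the linear growth of $\log|\tau'|^n$, while the upper bound $\overline C\,\lambda^n(x)$ is unchanged. The only delicate point, and thus the main obstacle such as it is, is precisely this asymptotic bookkeeping with the error terms inside the $\liminf$, together with the sign tracking caused by dividing through by the negative quantity $\log|I_n(x)|$; everything else is a direct substitution, which is why the result is an immediate corollary of the three lemmas.
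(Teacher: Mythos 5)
Your proposal is correct and follows essentially the same route as the paper, which derives this lemma as an immediate corollary of the second formula in Lemma~\ref{lem:SJ} combined with the two-sided oscillation bounds of Lemma~\ref{lem:Bedford_weak} and the distortion estimates of Lemma~\ref{lem:tau_distortion}; your bookkeeping of the $O(1)$ (respectively $o(n)$) error terms against the linear growth of $\log|\tau'|^n$ is exactly the implicit content of that one-line deduction.
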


The next lemma is a collection of several fundamental facts from thermodynamic formalisms.
Let us consider the Hausdorff spectrum of the following (sub-, sup-) level sets:
\begin{equation*}
	S _\alpha :=\left\{ x\in\J :\liminf _{n\to\infty }\frac{-\log\lambda ^n (x)}{\log |\tau ' | ^n  (x)} =\alpha\right\} ,
\end{equation*}
\[
	S _\alpha ^\leqslant :=\left\{ x\in\J :\liminf _{n\to\infty }\frac{-\log\lambda ^n (x)}{\log |\tau ' | ^n (x)}\leqslant\alpha\right\}\quad\mbox{and}\quad S _\alpha ^\geqslant :=\left\{ x\in\J :\liminf _{n\to\infty }\frac{-\log\lambda ^n (x)}{\log |\tau ' | ^n (x)}\geqslant\alpha\right\} .
\]
For the details of the following lemma, consult \cite{Pesin97} or \cite{Barreira08}.
Note that the function $\mathcal{D}:\A\to[0,1]$ as well as the related constants were introduced immediately before Theorem \ref{thm:dimE}.

\begin{lemma}	\label{lem:wellknown}
If $\A\neq\emptyset $, we have
\[ 
	\mathcal{D} (\alpha ) =\dim _H S _\alpha = 
	\begin{cases}
		\dim _H  S _\alpha  ^\leqslant &	\mbox{for }\alpha\in(\alpha _{\min},\alpha _c)\\
		\dim _H S _\alpha  ^\geqslant &	\mbox{for }\alpha\in [\alpha _c,\alpha _{\max})
	\end{cases}.
\]
Moreover, there is a Gibbs measure $\nu _\alpha\in\mathcal{P} _\tau (\J ) $ such that $\nu _\alpha ( S _\alpha ) = 1 $ and $	\dim _H\nu _\alpha =\dim _H S _\alpha$.
\end{lemma}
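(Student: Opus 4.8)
The plan is to recognise Lemma~\ref{lem:wellknown} as an instance of the classical conformal multifractal formalism for level sets of a ratio of Birkhoff sums and to read it off from the thermodynamic machinery in \cite{Pesin97,Barreira08}, after checking that the $\liminf$ in the definitions of $S_\alpha$, $S^\leqslant_\alpha$, $S^\geqslant_\alpha$ does not affect the spectrum on the stated ranges. Write $\varphi:=\log|\tau'|$ and $\psi:=\log\lambda$, both Hölder continuous on $\J$, with $\inf_{\J}\varphi>0$ by the partial hyperbolicity~\eqref{eq:partial_hyperbolic}. The first step is to observe, via the coding $x\mapsto([x]_n)_n$ (which identifies $(\J,\tau_{|\J})$ with the one-sided full shift on $\Sigma_\ell$, the branches being onto $(0,1)$) together with Lemma~\ref{lem:tau_distortion}, that the $n$-th monotonicity interval $I_n(x)$ plays the role of the $n$-cylinder, that $|I_n(x)|\asymp e^{-\varphi_n(x)}$, and that $\nu_\phi(I_n(x))\asymp e^{\phi_n(x)-nP(\phi)}$ for every Gibbs measure $\nu_\phi$, all uniformly in $x$ and $n$; hence Hausdorff dimensions on $\J$ are governed by $\varphi$ and $\psi$, and the quantity under study is $-\psi_n(x)/\varphi_n(x)$. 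Since $\A\neq\emptyset$, standard thermodynamic formalism (analyticity of the pressure along analytic families, and the implicit function theorem applied to the Bowen equation~\eqref{eq:Bowen}) shows that $q\mapsto A_q$ is real-analytic and convex, that $\alpha(q)=-A_q'$ is a strictly decreasing bijection of $\R$ onto $\A$ with $\alpha(0)=\alpha_c$, and that the inverse map $q(\cdot)$ equals $\D'$; in particular $q(\alpha)>0$ for $\alpha\in(\alpha_{\min},\alpha_c)$, $q(\alpha)\leqslant 0$ for $\alpha\in[\alpha_c,\alpha_{\max})$, and the extremum defining $\D(\alpha)$ is attained at $q=q(\alpha)$, so $\D(\alpha)=A_{q(\alpha)}+q(\alpha)\alpha$.

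For the lower bounds and the construction of $\nu_\alpha$, I would fix $\alpha\in\A$, put $q:=q(\alpha)$, and take $\nu_\alpha$ to be the equilibrium state for $\xi_q:=-A_q\varphi+q\psi$, a Gibbs measure with $P(\xi_q)=0$ by~\eqref{eq:Bowen} and Lemma~\ref{lem:eqisGibbs}. Differentiating~\eqref{eq:Bowen} in $q$ and using the derivative formula $\frac{d}{dt}P(\phi+t\chi)|_{t=0}=\int\chi\,d\nu_\phi$ gives $\alpha=-A_q'=\frac{-\int\psi\,d\nu_\alpha}{\int\varphi\,d\nu_\alpha}$, so by ergodicity of $\nu_\alpha$ and Birkhoff's theorem $-\psi_n(x)/\varphi_n(x)\to\alpha$ for $\nu_\alpha$-a.e.\ $x$, whence $\nu_\alpha(S_\alpha)=1$. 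Feeding the Gibbs estimates into the pointwise-dimension limit (using Lemma~\ref{lem:tau_distortion} and the comparability $\delta_0\leqslant|I_{n+1}(x)|/|I_n(x)|\leqslant\delta_0^{-1}$ to pass from balls to cylinders) yields $\underline{d}_{\nu_\alpha}(x)=\liminf_n\frac{\log\nu_\alpha(I_n(x))}{\log|I_n(x)|}=\liminf_n\frac{(\xi_q)_n(x)}{-\varphi_n(x)}$, which by Birkhoff's theorem equals $\frac{-A_q\int\varphi\,d\nu_\alpha+q\int\psi\,d\nu_\alpha}{-\int\varphi\,d\nu_\alpha}=A_q+q\alpha=\frac{h_\tau(\nu_\alpha)}{\int\varphi\,d\nu_\alpha}=\D(\alpha)$ for $\nu_\alpha$-a.e.\ $x$, where the identity $h_\tau(\nu_\alpha)=A_q\int\varphi\,d\nu_\alpha-q\int\psi\,d\nu_\alpha$ comes from $P(\xi_q)=0$. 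By Lemma~\ref{lem:Barreira_cite}, $\dim_H\nu_\alpha=\D(\alpha)$; since $\nu_\alpha(S_\alpha)=1$ and $S_\alpha\subseteq S^\leqslant_\alpha$, $S_\alpha\subseteq S^\geqslant_\alpha$, this already gives $\D(\alpha)\leqslant\dim_H S_\alpha\leqslant\min\{\dim_H S^\leqslant_\alpha,\dim_H S^\geqslant_\alpha\}$ and supplies the Gibbs measure $\nu_\alpha$ claimed in the statement.

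The remaining and main task is the matching upper bound, for which I would repeat the covering argument used for the sets $E^<_\alpha$ in the proof of Lemma~\ref{lem:box_hausdorff_upper}. Take $\alpha\in(\alpha_{\min},\alpha_c)$, so $q:=q(\alpha)>0$, fix $\varepsilon>0$, and let $x\in S^\leqslant_\alpha$: there are infinitely many $n$ with $-\psi_n(x)\leqslant(\alpha+\varepsilon)\varphi_n(x)$, and for each such $n$ the Gibbs property of $\nu_{\xi_q}$ gives $\nu_{\xi_q}(I_n(x))\asymp e^{-A_q\varphi_n(x)+q\psi_n(x)}\gtrsim e^{-(A_q+q(\alpha+\varepsilon))\varphi_n(x)}\asymp|I_n(x)|^{\D(\alpha)+q\varepsilon}$. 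For each scale $r>0$ I would pick for every $x\in S^\leqslant_\alpha$ one such index $n_r(x)$ with $|I_{n_r(x)}(x)|\leqslant r$, extract a pairwise disjoint subfamily $\mathfrak{U}_r$ of $\{I_{n_r(x)}(x):x\in S^\leqslant_\alpha\}$ still covering $S^\leqslant_\alpha$, and estimate, for $s>\D(\alpha)+q\varepsilon$, $\sum_{I\in\mathfrak{U}_r}|I|^s\lesssim r^{s-\D(\alpha)-q\varepsilon}\sum_{I\in\mathfrak{U}_r}\nu_{\xi_q}(I)\leqslant r^{s-\D(\alpha)-q\varepsilon}$; letting $r\to 0$ gives $\mathcal{H}^s(S^\leqslant_\alpha)=0$, so $\dim_H S^\leqslant_\alpha\leqslant\D(\alpha)+q\varepsilon$, and then $\varepsilon\to 0$ gives $\dim_H S^\leqslant_\alpha\leqslant\D(\alpha)$. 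The identical computation with $q(\alpha)\leqslant 0$ and the reversed inequality $-\psi_n(x)\geqslant(\alpha-\varepsilon)\varphi_n(x)$ (valid for all large $n$ when $x\in S^\geqslant_\alpha$) gives $\dim_H S^\geqslant_\alpha\leqslant\D(\alpha)$ for $\alpha\in[\alpha_c,\alpha_{\max})$. Combining this with the lower bounds above and the inclusions $S_\alpha\subseteq S^\leqslant_\alpha$ (for $\alpha<\alpha_c$) and $S_\alpha\subseteq S^\geqslant_\alpha$ (for $\alpha\geqslant\alpha_c$) yields every equality in the statement.

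I expect the covering step of the last paragraph to be the main obstacle: one has to handle the $\liminf$ (the good indices $n$ are interspersed and the corresponding cylinders can be far smaller than the target scale $r$), check the bounded overlap of the resulting Moran covers uniformly in $x$, and treat the endpoints of $\A$ by continuity and monotonicity of $\D$. The thermodynamic prerequisites invoked in the first paragraph — analyticity and convexity of $q\mapsto A_q$, strict monotonicity and surjectivity of $\alpha(\cdot)$ onto $\A$, and the variational formula for the derivative of the pressure — are entirely standard, but since the repeller $\J$ may be a genuine Cantor set I would cite them in the $C^{1+\alpha}$ cookie-cutter framework of \cite{Pesin97,Barreira08} rather than reprove them.
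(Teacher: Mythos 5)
The paper does not actually prove Lemma \ref{lem:wellknown}: it is presented as ``a collection of several fundamental facts from thermodynamic formalisms'' and is dispatched by citing \cite{Pesin97} and \cite{Barreira08}. Your proposal therefore does more than the paper, and what you reconstruct is essentially the standard proof that those references give for conformal repellers: the equilibrium (Gibbs) state $\nu_\alpha$ of $-A_{q(\alpha)}\log|\tau'|+q(\alpha)\log\lambda$, with zero pressure by \eqref{eq:Bowen}, gives full measure to $S_\alpha$ by Birkhoff and has dimension $A_q+q\alpha=\D(\alpha)$, while the Moran-cover/Gibbs-estimate argument — with the sign of $q(\alpha)$ deciding whether the sub-level or the super-level set can be controlled — is exactly how the one-sided statements for $\alpha\in(\alpha_{\min},\alpha_c)$ versus $\alpha\in[\alpha_c,\alpha_{\max})$ arise. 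Your covering step is the same scheme the author himself uses for the sets $E^<_\alpha$ inside the proof of Lemma \ref{lem:box_hausdorff_upper}, and the liminf/disjointness issues you flag are resolved there in the standard way (cylinders are nested or disjoint, so one passes to the maximal selected cylinders, all of length at most $r$). So your route is correct and, if anything, more self-contained than the paper's, at the cost of redoing material the author deliberately outsources.

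The one step of your sketch that is genuinely too quick is the identification $\underline{d}_{\nu_\alpha}(x)=\liminf_{n}\log\nu_\alpha(I_n(x))/\log|I_n(x)|$. Bounded ratios of consecutive cylinder lengths (Lemma \ref{lem:tau_distortion}) do not by themselves allow the passage from balls to cylinders in the direction you need: $B_{|I_n(x)|}(x)$ may pick up mass from neighbouring cylinders when $x$ lies very close to $\partial I_n(x)$, so the lower bound $\underline{d}_{\nu_\alpha}\geqslant\D(\alpha)$ a.e.\ requires control of $\dist(\partial I_n(x),x)$. This is precisely what the paper proves for Gibbs measures in Lemma \ref{lem:gibbs_props} via a Borel--Cantelli argument (yielding $\log\dist(\partial I_n(x),x)/\log|I_n(x)|\to 1$ a.e.), and alternatively one can argue with net measures on cylinders. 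With that ingredient supplied (and then Lemma \ref{lem:Barreira_cite} as you use it), your argument is complete and coincides with the proof in the cited literature.
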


It is now immediate to determine the Hausdorff spectrum $\alpha\mapsto\dim _H E _\alpha $.

\begin{proof}[{Proof of Theorems \ref{thm:dimE}}]
By Lemma \ref{lem:hol_symbolic} we have $ E _\alpha\bigtriangleup S _\alpha\subseteq\nullset $, and thus $\dim _H E _\alpha =\dim _H S _\alpha $ for all $\alpha\in\R $.
Therefore, in case $\A\neq\emptyset $, the assertion of the theorem follows from Lemma \ref{lem:wellknown}.
Moreover, in case $\A =\emptyset$, we obtain $\hol _W (x) =\alpha _c $ for all $ x\in\J\setminus\nullset $ from that lemma.
Assuming $\A=\emptyset $, it remains to show that $\hol _W (x)=\alpha _c$ for all $ x\in\J\cap\nullset $.
Observe that the above assumption is equivalent to that $(\alpha _c\log |\tau ' | +\log\lambda)_{|\J} $ is cohomologous to $ 0 $, i.e. it is equal to $\phi\circ\tau -\phi $ for some bounded function $\phi :\J\to\R $.\footnote{To see this, observe: $\A =\emptyset$ $\Leftrightarrow$ $-A_q= q\alpha_c$ for $\forall q\in\R$ $\Leftrightarrow$ $P_\J (q(\alpha _c \log |\tau '|+\log \lambda ))=0$ for $\forall q\in\R$.}
In particular, in view of Lemma \ref{lem:tau_distortion}, there is a constant $ C _1 > 0 $ such that $ C _1 ^{-1}\leqslant | I _n (u) | ^{-\alpha _c}\lambda ^n (u)\leqslant C _1 $ for all $ u\in\J $ and $ n\in\N $.
Hence, by Lemma \ref{lem:Bedford_weak},
\[
	\sup _{u\in I_n(x)}|W(u)-W(x)|\geqslant c\lambda ^n(x)\geqslant c C_1^{-1}|I_n(x)|^{\alpha _c}.
\]
for all $x\in\J$ and $n\in\N$.
On the other hand, by the first formula of Lemma \ref{lem:SJ} together with the fact $\lim _{n\to\infty}\frac{\log |I_n(x)|}{\log |I_{n+1}(x)|}=1$, we have
\[
	\hol _W(x)=\liminf _{n\to\infty}\frac{\log \, \sup _{u\in B_{|I_n(x)|}(x)}|W(x)-W(u)|}{\log |I_n(x)|}
\]
for all $x\in\T$, where $B_r(x):=\{u\in\T :|x-u|\leqslant r\}$ for $x\in\T$ and $r>0$.
Since $I_n(x)\subseteq B_{|I_n(x)|}(x)$, it follows immediately that $\hol _W(x)\leqslant\alpha _c$ for all $x\in\J$, and especially for all $x\in\J\cap\nullset$.
To show the inverse inequality, we  distinguish three cases.

Firstly, if $\{0,1\}\cap\J =\emptyset$, then $\J\cap\nullset =\emptyset$, so there is nothing to prove.

Secondly, assume $\{0,1\}\subseteq\J$.
Given $x\in\J\cap\nullset$ and $n\in\N$, there are $ m _n\in\N $ and $ x _n\in\J $ so that
\[
	\overline{I_n(x)}\cap\overline{I _{m _n} (x_n)}=\{x\}\quad\mbox{and}\quad	\delta _0| I _n (x) |\leqslant | I _{m _n} (x_n) |\leqslant | I _n (x) | ,
\]
where $\delta _0 > 0 $ is the constant from Lemma \ref{lem:tau_distortion}.
Thus we have
\[
	B_{|I_n(x)|}(x)\subseteq\overline{I _n ( x )\cup I _{m _n} ( x _n )}.
\]
In addition, by Lemma \ref{lem:Bedford_weak},
\[
	\sup _{u\in\overline{I _n ( x )\cup I _{m _n} ( x _n )}}|W(u)-W(x)|\leqslant\overline{C}(\lambda ^n(x)+\lambda ^{m_n}(x_n))\leqslant\overline{C}C_1(1+\delta _0^{-\alpha _c})|I_n(x)|^{\alpha _c}.
\]
Hence we can choose a constant $C_2>0$ so that
\[
	\sup _{u\in B _{|I_n(x)|}(x)}|W(u)-W(x)|\leqslant C_2\, |I_n(x)|^{\alpha _c}
\]
for all $x\in\J\cap\nullset $ and $n\in\N$, which in terns means that $\hol _{W}(x)\geqslant\alpha _c$ for all $x\in\J\cap\nullset$.

Finally, assume that only one of $0$ and $1$ belongs to $\J$.
We define the right ball $B_r^+(x)\subseteq \T\approx\R/\Z$ as the canonically projected image of $[x,x+r]\subseteq\R$, and similarly, the left ball $B_r^-(x)$.
Let $x\in\J\cap\nullset $.
For sufficiently large $n$, by the assumption here, the interval $\overline{I_n(x)}$ must correspond with $B^-_{|I_n(x)|}(x)$ or $B^+_{|I_n(x)|}(x)$, say with $B^-_{|I_n(x)|}(x)$.
Let $r_+:=\sup\{r>0:B^+_{r_0}(x)\cap\J=\{x\}\}$ where $\sup\emptyset :=0$.
If $r_+=0$, then there exist $(m_n)_n$ and $(x_n)_n$ as in the second case and the same arguments apply.
Thus it remains the case $r_+>0$.
As $W$ is $C^1$ on $(B _{r_+}^+(x))^\circ$ by Lemma \ref{lem:smooth_part}, there is a number $C(x)\geqslant C_2$ in dependence of $x$ such that
\[
	\sup _{u\in B_r^+(x)}|W(u)-W(x)|\leqslant C(x)\, r
\]
for all $r\in (0,1)$.
From $ B_{|I_n(x)|}(x)=\overline{I _n(x)}\cup B_{|I_n(x)|}^+(x)$ follows
\[
	\sup _{u\in B_{|I_n(x)|}(x)}|W(u)-W(x)|=\sup _{u\in I_n(x)\cup B^+_{|I_n(x)|}(x)}|W(u)-W(x)|\leqslant (\overline{C} + C(x))\, |I_n(x)|^{\alpha _c}
\]
for all $n\in\N$, which implies $\hol _W(x)\geqslant \alpha _c$.
\end{proof}

We turn to the lifted spectrum $\alpha\mapsto\dim _H(\graph W_\ttheta\cap (E _{\ttheta, \alpha}\times\R))$ for almost all $\ttheta$.

\begin{proof}[Proof of Theorem \ref{thm:random_spectrum}]
As its upper bound is already provided in Lemma \ref{lem:Jin}, we only discuss the lower estimate.
uniformly distributed random sequence on $\T$.
Assume that $\ttheta\in\T ^{\N _0} $ is an independent and identically uniformly distributed random sequence on $\T$ and that $\tau _{|\J}$ is expansive.
Let $\mu _{\ttheta ,\alpha }=\nu _\alpha\circ (\mathrm{Id}_\T\times W_\ttheta)^{-1} $ be the lift of the Gibbs measure $\nu_\alpha$ introduced in Lemma \ref{lem:wellknown}, so we have $\mu _{\ttheta ,\alpha } (\graph W_\ttheta\cap (S _\alpha\times\R))=1$.
Furthermore, by Lemma \ref{lem:lift} and Theorem \ref{thm:dimE}, for each $\alpha\in\A$, we have almost surely
\begin{eqnarray*}
	\dim _H(\graph W_\ttheta\cap (S _\alpha\times\R))&\geqslant &\dim_H(\mu_{\ttheta,\alpha})\\
	& = &	\min\left\{\dim _H\nu _\alpha + 1 +\frac{\int\log\lambda\, d\nu _\alpha}{\int\log |\tau ' |\, d\nu _\alpha} ,\frac{h _\tau (\nu _\alpha )}{\int\log |\tau ' |\, d\nu _\alpha}\right\}\\
	& = &	\min\left\{\D (\alpha ) + 1 -\alpha ,\frac{\D (\alpha )}{\alpha}\right\}	 .
\end{eqnarray*}
Thus there is a set $Z\subseteq \T ^{\N _0}$ of full probability such that
\[
	\dim _H(\graph W_\ttheta\cap (S _\alpha\times\R))\geqslant\min\left\{\D (\alpha ) + 1 -\alpha ,\frac{\D (\alpha )}{\alpha}\right\}=:\tilde{D}(\alpha)
\]
for all $\alpha\in\A\cap\mathbb{Q} $ and $\ttheta\in Z$.
In view of Lemma \ref{lem:hol_symbolic}, by shrinking $Z$ if needed, we may also assume that $S_\alpha\bigtriangleup E_{\ttheta,\alpha}\subseteq\nullset$ for all $\alpha\in\A\cap\mathbb{Q}$ and $\ttheta\in Z$.
Then, by Lemma \ref{lem:wellknown}, we obtain
\begin{eqnarray*}
	\dim _H(\graph W_\ttheta\cap (E _{\ttheta ,\alpha}\times\R))	&=&	\dim _H(\graph W_\ttheta\cap (S _\alpha\times\R)) \\
	&=&	\dim _H(\graph W_\ttheta\cap (S _\alpha ^\leqslant\times\R)) \\
	&\geqslant &	\dim _H(\graph W_\ttheta\cap (S _{\alpha'} ^\leqslant\times\R)) \\
	&= &	\dim _H(\graph W_\ttheta\cap (S _{\alpha'}\times\R)) =\tilde{D} (\alpha')
\end{eqnarray*}
for all $\alpha'\in (\alpha _{\min}, \alpha]\cap\mathbb{Q}$, $\alpha\in (\alpha _{\min} ,\alpha _c]$ and $\ttheta\in Z$.
As $\tilde{D}$ is continuous, it follows that
\[
	\dim _H(\graph W_\ttheta\cap (E _{\ttheta ,\alpha}\times\R))\geqslant\tilde{D}(\alpha )
\]
for all $\alpha\in (\alpha _{\min} ,\alpha _c]$ and $\ttheta\in Z$.
A similar argument works for $\alpha\in [\alpha _c ,\alpha _{\max} ) $.
\end{proof}

\section{Dimension of the randomised lifted measure}	\label{sec:lift}

This section is dedicated to the proof of Lemma \ref{lem:lift}, which gives the value of Hausdorff dimension of the lifts of any Gibbs measures on the graph of the randomised function $W_\ttheta$.
The actual sharp upper bound will be straightforwardly determined in Lemma \ref{lem:upper_bound}, however, to verify the equality is, in spite of the randomisation, harder and of a major interest here.
The latter will be completed in Lemmas \ref{lem:lower1} and \ref{lem:lower2}, as concluded at the end of this section.

We start with a fundamental observation of Gibbs measures.
Recall that the distance between a set and a point is defined as $\dist ( A , x ) :=\inf\{ d(x,v) : v\in A\} $ for $ A\subseteq\T $ and $ x\in\T$.

\begin{lemma}	\label{lem:gibbs_props}
Let $\nu\in\mathcal{P} _\tau (\J)$ be a Gibbs measure.
Then,
\[
	\lim _{n\to\infty}\frac{\log\dist (\partial I _n (x) , x ) }{\log | I _n (x) |} = 1
\]
for $\nu $-a.a. $ x\in\J$.
In addition, $\dim _H\nu =\frac{h _\tau (\nu ) }{\log|\tau '| d\nu}$.
\end{lemma}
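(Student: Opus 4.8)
The plan is to prove the two assertions separately, both relying on the Gibbs property and the distortion estimates of Lemma \ref{lem:tau_distortion}. For the first claim, fix the Hölder potential $\phi$ with $C_\phi^{-1} \leqslant \nu(I_n(x)) / e^{\phi_n(x) - P_\phi} \leqslant C_\phi$. Since $\tau_{|\J}$ restricted to each monotonicity interval is a $C^{1+\alpha}$-diffeomorphism onto $(0,1)$, the interval $I_{n+1}(x)$ is obtained from $I_n(x)$ by intersecting with $\tau^{-n}I_{\kappa(\tau^n x)}$; applying the distortion control on $\rho_{[x]_n}$, the position of $x$ inside $I_n(x)$ is, up to a bounded multiplicative factor, the image under $\rho_{[x]_n}$ of the position of $\tau^n x$ inside $I_{\kappa(\tau^n x)}$. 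Hence $\dist(\partial I_n(x), x) \geqslant D^{-1} |I_n(x)| \cdot \dist(\partial I_{\kappa(\tau^n x)}, \tau^n x)$. So the claim reduces to showing that $\dist(\partial I_{\kappa(\tau^n x)}, \tau^n x)$ does not decay exponentially along the orbit, i.e. $\frac{1}{n}\log \dist(\partial I_{\kappa(\tau^n x)}, \tau^n x) \to 0$ for $\nu$-a.e. $x$. This is a Borel–Cantelli argument: for $\varepsilon > 0$, the set $G_{n,\varepsilon} := \{ y \in \J : \dist(\partial I_{\kappa(y)}, y) < e^{-\varepsilon n} \}$ is contained in a union of $2\ell$ subintervals of the $I_i$, each of length $\leqslant e^{-\varepsilon n}$, and since $\nu$ is a Gibbs measure it is atom-free with $\nu(\{y : \dist(\partial I_{\kappa(y)}, y) < t\}) \to 0$ as $t \to 0$; quantifying this decay (using that $\nu(I_m(y)) \leqslant C_\phi e^{\phi_m(y) - P_\phi}$ decays uniformly exponentially in $m$ by Lemma \ref{lem:tau_distortion} and partial hyperbolicity, so a boundary neighbourhood of scale $e^{-\varepsilon n}$ has $\nu$-measure at most $C e^{-c\varepsilon n}$ for some $c > 0$) gives $\sum_n \nu(\tau^{-n} G_{n,\varepsilon}) = \sum_n \nu(G_{n,\varepsilon}) < \infty$ by $\tau$-invariance. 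Borel–Cantelli then yields $\dist(\partial I_{\kappa(\tau^n x)}, \tau^n x) \geqslant e^{-\varepsilon n}$ eventually, for $\nu$-a.e. $x$; combined with $\dist(\partial I_n(x), x) \leqslant |I_n(x)|$ and $\frac1n \log|I_n(x)| \to \int \log|\tau'|\, d\nu < 0$ (Lemma \ref{lem:tau_distortion} plus Birkhoff), letting $\varepsilon \to 0$ through a sequence gives the limit $1$.

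For the second claim, $\dim_H \nu = h_\tau(\nu)/\int \log|\tau'|\, d\nu$, the strategy is to compute the lower pointwise dimension $\underline{d}_\nu(x)$ and invoke Lemma \ref{lem:Barreira_cite}. By the Gibbs property and Lemma \ref{lem:tau_distortion}, $\log \nu(I_n(x)) = \phi_n(x) - P_\phi + O(1)$ and $\log|I_n(x)| = -\log|(\tau^n)'(x)| + O(1) = -(\log|\tau'|)_n(x) + O(1)$, so by Birkhoff's ergodic theorem (applicable since Gibbs measures are ergodic by Lemma \ref{lem:eqisGibbs}) and the Shannon–McMillan–Breiman theorem, $\frac{\log \nu(I_n(x))}{\log |I_n(x)|} \to \frac{h_\tau(\nu)}{\int \log|\tau'|\, d\nu}$ for $\nu$-a.e. $x$ — here one uses that the partition-entropy limit $-\frac1n \log \nu(I_n(x)) \to h_\tau(\nu)$ because the monotonicity intervals generate the Borel $\sigma$-algebra on $\J$ (the diameters go to $0$). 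To pass from intervals $I_n(x)$ to balls $B_r(x)$, I would use the first claim of this very lemma: since $\dist(\partial I_n(x), x)$ and $|I_n(x)|$ are logarithmically comparable for $\nu$-a.e. $x$, for such $x$ a ball $B_r(x)$ of radius $r \approx |I_n(x)|$ contains $I_n(x)$ and is contained in a bounded union of consecutive level-$n$ (or level-$(n\pm 1)$) intervals, whose $\nu$-measures are all logarithmically comparable to $\nu(I_n(x))$; combined with $\frac1n\log|I_{n+1}(x)| \sim \frac1n\log|I_n(x)|$ this yields $\underline{d}_\nu(x) = h_\tau(\nu)/\int\log|\tau'|\,d\nu$ for $\nu$-a.e. $x$. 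Lemma \ref{lem:Barreira_cite} then identifies $\dim_H \nu$ with the $\nu$-essential supremum of $\underline{d}_\nu$, which is this constant.

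The main obstacle I expect is the first claim — controlling the $\nu$-measure of shrinking neighbourhoods of the boundary points $\{0,1\}$ (and their preimages) well enough to run Borel–Cantelli. One must rule out that $\nu$ concentrates too heavily near $\partial I_i$, which is exactly where the atom-free Gibbs estimate and the uniform exponential decay of $\nu(I_m(\cdot))$ (from partial hyperbolicity, \eqref{eq:partial_hyperbolic}, via Lemma \ref{lem:tau_distortion}) are needed; getting a clean geometric-series bound $\nu(G_{n,\varepsilon}) \leqslant C e^{-c\varepsilon n}$ rather than merely $\nu(G_{n,\varepsilon}) \to 0$ is the technical crux. Everything else is a routine combination of Birkhoff, Shannon–McMillan–Breiman, and the bounded-distortion lemmas already established.
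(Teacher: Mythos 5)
Your overall architecture is close to the paper's (a Borel--Cantelli argument for the first claim; the sandwich $B_{\dist(\partial I_n(x),x)}(x)\subseteq I_n(x)\subseteq B_{|I_n(x)|}(x)$ together with Shannon--McMillan--Breiman, Birkhoff and Lemma \ref{lem:Barreira_cite} for the second), but the step you yourself call the technical crux is not actually established. The bound $\nu(G_{n,\varepsilon})\leqslant Ce^{-c\varepsilon n}$ does not follow from Lemma \ref{lem:tau_distortion} and \eqref{eq:partial_hyperbolic}: those control the \emph{lengths} $|I_m(y)|$, not the \emph{measures} $\nu(I_m(y))$, and the Gibbs upper bound $C_\phi\,e^{\phi_m(y)-mP(\phi)}$ is not uniformly exponentially small for free --- that would require $\sup_y\phi_m(y)-mP(\phi)\leqslant -cm$, i.e. $\sup_{\mu\in\mathcal{P}_\tau(\J)}\int\phi\,d\mu<P(\phi)$ (equivalently, positive entropy of the Gibbs state), which is true but needs its own argument (for instance the quasi-Bernoulli estimate for Gibbs cylinders combined with atom-freeness), after which one still has to cover the boundary strip of width $e^{-\varepsilon n}$ by $O(1)$ cylinders of depth $\asymp\varepsilon n$ using the length lower bound from Lemma \ref{lem:tau_distortion}. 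So, as written, the decisive estimate is asserted with an incorrect justification, even though the route is salvageable.

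The paper sidesteps exactly this point. Since $\tau^n$ maps $I_n(x)$ onto $(0,1)$, bounded distortion gives $\dist(\partial I_n(x),x)\asymp|I_n(x)|\cdot d(0,\tau^n x)$, i.e. the relevant quantity is the distance of $\tau^n x$ to $\{0,1\}$, not merely to $\partial I_{\kappa(\tau^n x)}$ (your inequality is the weaker consequence of this). Hence only shrinking neighbourhoods of the two endpoints matter, and these are exhausted by the cylinders $\rho_0^{M}(\T)$ and $\rho_{\ell-1}^{M}(\T)$ anchored at fixed points $x_i$ of $\tau$, where the Gibbs property evaluated along the fixed point gives $\nu(\rho_i^{M}(\T))\asymp e^{M(\phi(x_i)+P_\phi)}$ with $\phi(x_i)+P_\phi<0$; Borel--Cantelli with $\tau$-invariance then runs exactly as you intend, with no global statement about cylinder-measure decay needed. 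Separately, in your second part the assertion that $B_r(x)$ lies in a bounded union of consecutive level-$n$ cylinders whose measures are all logarithmically comparable to $\nu(I_n(x))$ is unjustified --- adjacent Gibbs cylinders of the same depth can carry exponentially different weights --- but it is also unnecessary: the inclusions $B_{\dist(\partial I_n(x),x)}(x)\subseteq I_n(x)\subseteq B_{|I_n(x)|}(x)$, the first claim, and $\log|I_n(x)|/\log|I_{n+1}(x)|\to 1$ already yield $\underline{d}_\nu(x)=h_\tau(\nu)/\int\log|\tau'|\,d\nu$ for $\nu$-a.e.\ $x$, which is how the paper concludes.
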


\begin{proof}
First, we show that $\lim _{n\to\infty}\nu (\rho _i^n(\T))=0$ converges exponentially fast for each $i\in\Sigma _{\ell}$.
Since $x_i:=\lim _{k\to\infty}\rho _i^k(0)$ is a fixed point of $\tau$, we have 
\[
	C_\phi^{-1}e^{n(\phi(x_i)+P_\phi)}\leqslant\nu (\rho _i^n(\T))\leqslant C_\phi e^{n(\phi(x_i)+P_\phi)}
\]
for all $n\in\N$, where $ C _\phi $, $P_\phi$ and $\phi $ are as in the definition of Gibbs measure.
As $\phi(x_i)+P_\phi<0$ follows from the left inequality, the right one implies the exponential decay.

Now, we turn to the first claim of the lemma.
Given an arbitrary $\alpha\in ( 0 , 1 ) $, let $ M _n :=\lfloor\alpha n\rfloor $ and
\[
	E _n :=	\tau ^{-n}\left(\rho _0 ^{M_n}(\T)\cup\rho _{\ell -1} ^{M_n}(\T)\right) 
\]
for $ n\in\mathbb{N} $.
Since $\tau ^nx\in\J\cap\left(\rho _0 ^{M_n}(\T)\cup\rho _{\ell -1} ^{M_n}(\T)\right)$, we have
\[
d(0,\tau ^nx)\geqslant\min\{|\rho _0 ^{M_n}(\T)|,|\rho _{\ell -1} ^{M_n}(\T)|\}\geqslant (\inf|1/\tau'|) ^{M_n}.
\]
Furthermore, by the mean value theorem and Lemma \ref{lem:tau_distortion},
\[
	\dist (\partial I _n (x),x)\geqslant D^{-1}d(0,\tau ^nx) \inf _{u\in\T}|\rho _{[x]_n}'(u)|\geqslant D^{-2} \dist (\J, 0) |I _n (x)|\geqslant D^{-2} (\inf |1/\tau'|) ^{M_n}|I_n(x)|
\]
for all $n\in\N$ and $x\in\J$.
Thus, by Lemma \ref{lem:tau_distortion}, we obtain the bound
\begin{equation*}
	\limsup _{n\to\infty}\frac{\log\dist (\partial I _n (x) , x ) }{\log | I _n (x) |}\leqslant 1 +\lim _{n\to\infty} \frac{M _n}{\log | I _n (x) |}\log (\inf |1 /\tau' |) =	1 +\alpha\frac{\log (\inf |1 /\tau'| )}{\log (\sup _\J |1/\tau '|)}
\end{equation*}
for all $ x\in\J\setminus\bigcap _{N\in\N}\bigcup _{n\geqslant N} E _n $.
On the other hand, since
\[
	\nu ( E _n ) =\left(\rho _0^n(\T)\cup\rho _{\ell -1}^n(\T)\right)\leqslant\left(\rho _0^n(\T)\right)+\nu\left(\rho _{\ell -1}^n(\T)\right)
\]
decays exponentially as $n\to\infty$, $\nu\left(\bigcap _{N\in\N}\bigcup _{n\geqslant N} E _n\right) = 0$ follows from Borel–Cantelli lemma.
Thus the bound holds for $\nu $-a.a. $x\in\J$, independently of the choice of $\alpha\in(0,1)$.
Letting $\alpha\searrow 0 $ finishes the proof of this part.

Now, we turn to the second claim on $\dim_H\nu$.
Since $\lim _{n\to\infty}\frac{\log | I _n (x) |}{\log | I _{n+1} (x) |} = 1$ by Lemma \ref{lem:tau_distortion}, and since $B _{\dist(\partial I _n (x) , x )} ( x )\subseteq  I _n ( x )\subseteq B _{| I _n (x) | } (x)$,
it follows from the first claim that
\[
	\underline{d} _\nu (x):=\liminf _{r\to 0}\frac{\log\nu\left( B _r(x)\right)}{\log r}=	\liminf _{n\to\infty}\frac{\log\nu ( B _{|I_n(x)|} (x)) }{\log |I_n(x)|}=\liminf _{n\to\infty}\frac{\log\nu ( I _n (x) ) }{\log | I _n (x) |}=\frac{h_\tau(\nu)}{\int \log |\tau'|\,d\nu}
\]
for $\nu$-a.a. $x\in\J$, where the last equality is due to Shannon–McMillan–Breiman theorem and Birkhoff's ergodic theorem together with Lemma \ref{lem:tau_distortion}.
\end{proof}

\begin{lemma}[Upper bound for $\dim _H \mu _\ttheta$]	\label{lem:upper_bound}
Let $\nu\in\mathcal{P} _\tau (\J ) $ be a Gibbs measure.
Then we have
\[
	\dim _H\mu _\ttheta\leqslant	\min\left\{\dim _H\nu + 1 +\frac{\int\log\lambda\, d\nu}{\int\log |\tau '|\, d\nu} ,\frac{h _\tau (\nu )}{\int\log |\tau '|\, d\nu}\right\} 
\]
for all $\ttheta\in\T ^{\N _0} $. 
\end{lemma}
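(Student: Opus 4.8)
The plan is to obtain both terms of the minimum from explicit square covers of the graph built from the monotonicity cells $I_n(x)$, using only the \emph{one-sided} oscillation estimate $\sup_{v\in I_n(x)}|W_\ttheta(x)-W_\ttheta(v)|\leqslant\overline{C}\,\lambda^n(x)$ of Lemma \ref{lem:Bedford_weak}, together with Lemma \ref{lem:tau_distortion} and the partial hyperbolicity \eqref{eq:partial_hyperbolic}; since all of these are uniform in $\ttheta$, the bound will hold for every $\ttheta$. Write $h:=h_\tau(\nu)$, $\ell:=\int\log\lambda\,d\nu<0$, $t:=\int\log|\tau'|\,d\nu>0$, and $\Delta>1$ for the left-hand side of \eqref{eq:partial_hyperbolic}, so that $\dim_H\nu=h/t$ by Lemma \ref{lem:gibbs_props} and $\alpha_\nu:=-\ell/t\in(0,1)$ because $\log(\lambda|\tau'|)\geqslant\log\Delta>0$ forces $-\ell<t$. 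By Birkhoff's and the Shannon--McMillan--Breiman theorems, together with Lemma \ref{lem:tau_distortion}, there is a set $\J'\subseteq\J$ with $\nu(\J\setminus\J')=0$ on which $\tfrac1n\log\nu(I_n(x))\to-h$, $\tfrac1n\log\lambda^n(x)\to\ell$ and $\tfrac1n\log|I_n(x)|\to-t$. Fix $\varepsilon>0$ and let $\J'_N\subseteq\J'$ be the subset on which these three limits are realized with error $<\varepsilon$ for all $n\geqslant N$; then $\J'=\bigcup_N\J'_N$, the measure $\mu_\ttheta$ is carried by $\bigcup_N(\graph W_\ttheta\cap(\J'_N\times\R))$, and by countable stability of Hausdorff dimension it suffices to bound $\dim_H(\graph W_\ttheta\cap(\J'_N\times\R))$ for each $N$ and then let $\varepsilon\to0$.

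For the first term I would cover $\J'_N$ by the Moran cover $\mathfrak U_r=\{I_{n_r(x)}(x):x\in\J'_N\}$ at horizontal scale $r$, where $n_r(x)=\min\{n:|I_n(x)|<r\}$; its members are essentially disjoint with $|I|\asymp r$. For $I=I_n(x)\in\mathfrak U_r$, Lemma \ref{lem:Bedford_weak} confines $\graph W_\ttheta\cap((\J'_N\cap I)\times\R)$ to a box of dimensions $|I|\times 2\overline{C}\lambda^n(x)$, hence to $O(\lambda^n(x)/r)$ squares of side $\asymp r$. On such a cell $n=\frac{\log(1/r)}{t}(1+O(\varepsilon))$, so $\lambda^n(x)=r^{\alpha_\nu+O(\varepsilon)}$ and $\nu(I)=r^{\dim_H\nu+O(\varepsilon)}$; and since the cells are disjoint with total $\nu$-mass $\leqslant1$, there are $O(r^{-\dim_H\nu-O(\varepsilon)})$ of them. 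The resulting $s$-dimensional sum is
\[
\sum_{I_n(x)\in\mathfrak U_r}O\!\left(\frac{\lambda^{n}(x)}{r}\right)r^{s}\ \lesssim\ r^{\,s-1}\sum_{I_n(x)\in\mathfrak U_r}\lambda^{n}(x)\ \lesssim\ r^{\,s-1+\alpha_\nu-\dim_H\nu-O(\varepsilon)},
\]
which tends to $0$ as $r\to0$ whenever $s>\dim_H\nu+1-\alpha_\nu+O(\varepsilon)$. Letting $r\to0$, $N\to\infty$ and $\varepsilon\to0$ gives $\dim_H\mu_\ttheta\leqslant\dim_H\nu+1-\alpha_\nu=\dim_H\nu+1+\frac{\int\log\lambda\,d\nu}{\int\log|\tau'|\,d\nu}$.

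For the second term I would pass to the pointwise dimension and invoke Lemma \ref{lem:Barreira_cite}. For small $r$ put $m_r(x):=\min\{n:\overline{C}\lambda^n(x)\leqslant r/2\}$. Since $\lambda^n(x)/|I_n(x)|\gtrsim\Delta^n\to\infty$ uniformly in $x$ by \eqref{eq:partial_hyperbolic} and Lemma \ref{lem:tau_distortion}, for $r$ small the cell $I_{m_r(x)}(x)$ has both width and vertical oscillation at most $r/2$, so its image under $u\mapsto(u,W_\ttheta(u))$ lies in $B_r((x,W_\ttheta(x)))$ and $\mu_\ttheta(B_r((x,W_\ttheta(x))))\geqslant\nu(I_{m_r(x)}(x))$. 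As $\log r$ and $\log\lambda^{m_r(x)}(x)$ differ by $O(1)$, this yields $\underline{d}_{\mu_\ttheta}(x,W_\ttheta(x))\leqslant\liminf_{n\to\infty}\frac{\log\nu(I_n(x))}{\log\lambda^n(x)}=\frac{-h}{\ell}=\frac{h_\tau(\nu)}{-\int\log\lambda\,d\nu}$ for $\nu$-a.a.\ $x$, hence $\dim_H\mu_\ttheta\leqslant\frac{h_\tau(\nu)}{-\int\log\lambda\,d\nu}$ by Lemma \ref{lem:Barreira_cite}; alternatively one covers $\J'_N$ by the essentially disjoint cells $I_{m_r(x)}(x)$ at vertical scale $r$ and repeats the square-count. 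Combining the two estimates yields the stated bound.

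The Moran-cover computations are routine; the point that deserves care is \emph{why} the first bound cannot be obtained pointwise in the same way as the second. A matching lower bound $\mu_\ttheta(B_r((x,W_\ttheta(x))))\gtrsim\nu(I_n(x))\,r/\lambda^n(x)$ with $|I_n(x)|\asymp r$ --- which would produce the first term pointwise --- requires $W_\ttheta|_{I_n(x)}$ to ``fill'' its vertical box of height $\asymp\lambda^n(x)$ with the correct $\nu$-density, and this can fail for exceptional $\ttheta$; securing it is precisely the content of the harder lower-bound lemmas that follow, where the randomness of $\ttheta$ and the critical point hypothesis enter. The covering argument above sidesteps the issue because it uses nothing beyond the one-sided estimate of Lemma \ref{lem:Bedford_weak}, which holds unconditionally.
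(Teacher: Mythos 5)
Your proposal is correct and takes essentially the same route as the paper: the first term is obtained by covering the graph over Birkhoff/Shannon--McMillan--Breiman-typical sets (the paper's $D_N$) using only the one-sided oscillation bound of Lemma \ref{lem:Bedford_weak} together with Lemma \ref{lem:tau_distortion} and \eqref{eq:partial_hyperbolic}, and the second by an upper bound on the lower pointwise dimension via Lemma \ref{lem:Barreira_cite}; your Moran cover at geometric scale $r$ (instead of fixed symbolic depth $n$) and your vertical stopping time $m_r(x)$ with the plain definition of $\underline{d}_{\mu_\ttheta}$ (instead of the paper's modified, anisotropic version of Barreira's Proposition 2.1.4) are only cosmetic variations. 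Note that, exactly like the paper's own argument, your second bound comes out as $h_\tau(\nu)/(-\int\log\lambda\,d\nu)$, which is what the statement must mean (it is what the proof of Theorem \ref{thm:random_spectrum} uses as $\D(\alpha)/\alpha$), so the denominator $\int\log|\tau'|\,d\nu$ printed in the lemma is evidently a typo rather than a defect of your argument.
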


\begin{proof}
Let $\ttheta\in \T ^{\N _0 } $ be fixed.
By Lemma \ref{lem:Bedford_weak}, there is a constant $\bar{C} > 0 $ so that
\begin{equation}
	\sup _{v\in\J\cap I _n (x)} | W _\ttheta (x) - W _\ttheta (v) |\leqslant\bar{C}\,\lambda ^n (x) 	\label{eq:Moss_proof}
\end{equation}
for all $x\in\J$ and $ n\in\N $.
Given an arbitrary $\varepsilon\in ( 0 , e ^{-\int\log |\tau '| d\nu} ) $, let $ D _N\subseteq\J$ denote the set of those $x\in\J$ which satisfies
\[
\max \left\{
\left|\frac{\log\lambda ^n ( x )}{n}  -\int\log\lambda\, d\nu\right| ,
\left|\frac{-\log\nu ( I _n ( x ) )}{n}  - h _\tau (\nu )\right| ,
\left|\frac{-\log | I _n (x) |}{n}  -\int\log |\tau '|\, d\nu\right| 
\right\}<\varepsilon 
\]
for all $ n\geqslant N $.
Observe that $\nu\left(\bigcup _N D _N\right) = 1 $ by Birkhoff's ergodic theorem, Shannon-McMillan-Breiman theorem and Lemma \ref{lem:tau_distortion}.
Furthermore, let $ G _N :=\left\{ ( x , W _\ttheta (x) ) : x\in D _N\right\} $ and $\mathcal{U} _n ^N :=\{ I _n (x) : x\in D _N\} $ for $n\geqslant N$.
Since $	e ^{ n\left( - h _\tau (\nu ) -\varepsilon\right)} <\nu ( I )$
for each $I\in\mathcal{U} ^N _n$, $\mathcal{U} ^N _n $ contains at most $e ^{n\left( h _\tau (\nu ) +\varepsilon\right)}$ intervals.
On the other hand, in view of \eqref{eq:Moss_proof}, for each $ I _n (x)\in\mathcal{U} ^N _n $ we can cover $ G _N\cap\left( I _n (x)\times\mathbb{R}\right)$ by $\left\lceil\overline{C}\,\frac{\lambda ^n (x ) }{ | I _n ( x ) | }\right\rceil $ cubes with edge length $ | I _n ( x ) | $.
Furthermore, in view of the definition of $D_N$, this number is bounded as
\[
	\left\lceil\overline{C}\,\frac{\lambda ^n (x ) }{ | I _n ( x ) | }\right\rceil\leqslant	\overline{C} \, e^{n\left(\int\log |\tau '|\, d\nu +\int\log\lambda\, d\nu + 2\varepsilon\right)} + 1.
\]
Let $\tilde{\mathcal{U}} ^N _n $ be the set of all those cubes over $x\in D_N$.
Observe that $\tilde{\mathcal{U}}^N _N$ is a covering of $G_N$ since $\mathcal{U} ^N _n$ is one of $D_N$.
In addition, using the definition of $D_N$, we can see
\begin{eqnarray*}
	&&	\sum _{Q\in\tilde{\mathcal{U}} ^N _n}\mbox{edge-length} ( Q ) ^s	\\
	&\leqslant	&	\sum _{I\in\mathcal{U} ^N _n}\left(\overline{C}\, e^{n\left(\int\log |\tau '|\, d\nu +\int\log\lambda\, d\nu + 2\varepsilon\right)} + 1\right)\cdot | I  | ^s\\
	& = &	\sum _{I\in\mathcal{U} ^N _n}\left(\overline{C}\, e^{n\left(\int\log |\tau '|\, d\nu +\int\log\lambda\, d\nu  +\frac{s\log | I |}{n} + 2\varepsilon\right)} + | I | ^s\right)\\
		&\leqslant &	\sum _{I\in\mathcal{U} ^N _n}\left(\overline{C}\, e ^{ n\left(\int\log |\tau '|\, d\nu  +\int\log\lambda\, d\nu  + s\left( -\int\log |\tau '|\, d\nu +\varepsilon\right) + 2\varepsilon\right) } +  e ^{s n ( -\int\log |\tau '|\, d\nu +\varepsilon ) }\right)\\
			&\leqslant &	  e ^{ n ( h _\tau (\nu ) +\varepsilon ) }\cdot\left(\overline{C}\cdot e ^{ n\left(\int\log |\tau '|\, d\nu +\int\log\lambda\, d\nu  + s\left( -\int\log |\tau '|\, d\nu +\varepsilon\right) + 2\varepsilon\right) } +  e ^{s n ( -\int\log |\tau '|\, d\nu +\varepsilon ) }\right)\\
	& = &\overline{C}\cdot e ^{ n\left(  h _\tau (\nu )  + ( 1 - s )\int\log |\tau '|\, d\nu   +\int\log\lambda\, d\nu + (3 + s )\varepsilon\right)  }	 + e ^{ n\left( h _\tau (\nu ) - s\int\log |\tau '|\, d\nu + ( 1 + s)\varepsilon\right) } 	\quad\stackrel{n\to\infty}{\longrightarrow} 0 
\end{eqnarray*}
for all $
	s > f (\varepsilon ) : =\max\left\{\frac{ h _\tau (\nu ) +\int\log\lambda\, d\nu +\int\log |\tau '|\, d\nu  + 3\varepsilon }{\int\log |\tau '|\, d\nu -\varepsilon} ,\frac{h _\tau (\nu )  +\varepsilon}{\int\log |\tau '|\, d\nu -\varepsilon}\right\}$.
Thus $\dim _H ( G _N )\leqslant f (\varepsilon ) $.
Furthermore, as Hausdorff dimension is $\sigma$-stable, we have
\[
	\dim _H\left(\bigcup _N G _N\right) =\sup _N\dim _H ( G _N )\leqslant f (\varepsilon ) .
\]
On the other hand, $\mu _\ttheta\left(\bigcup _N G _N\right)=\nu\left(\bigcup _N D _N\right) = 1 $, so we have $\dim _H (\mu _\ttheta )	\leqslant\dim _H\left(\bigcup _N G _N\right)\leqslant f(\varepsilon)$.
Letting $\varepsilon\to 0$, we obtain
\[
	\dim _H (\mu _\ttheta )\leqslant\lim _{\varepsilon\to 0}	f (\varepsilon	) =\max\left\{ 1 +\frac{  h _\tau (\nu) +\int\log\lambda\, d\nu}{\int\log |\tau '|\, d\nu} ,\frac{h _\tau (\nu )}{\int\log |\tau '|\, d\nu}\right\} =  1 +\frac{  h _\tau (\nu) +\int\log\lambda\, d\nu}{\int\log |\tau '|\, d\nu},
\]
where the last equality is due to \eqref{eq:partial_hyperbolic}.
In view of the formula for $\dim _H\nu$ provided in Lemma \ref{lem:gibbs_props}, it is shown that that the left value of the claimed maximum is a correct upper bound.

In order to derive the other bound, we consider the following rough estimate.
Observe that for any $\beta\in (0,1) $ and $ C > 0 $ we have
\[
	\underline{d} _{\mu _\ttheta} ( x , W _\ttheta (x) ) =\liminf _{n\to\infty}\frac{\log\nu\left\{ v\in\T :  | x - v |\leqslant\beta ^n ,\; | W _\ttheta ( x ) - W _\ttheta (v) |\leqslant C\;\beta ^n\right\}}{\log\beta ^n}
\]
for $\nu $-a.a. $ x $, which is a slight modification of \cite[Proposition 2.1.4]{Barreira08}.
Given an arbitrary $\varepsilon\in ( 0 , e ^{\int\log\lambda\, d\nu } ) $, we consider the same set $ D _N\subseteq\J $, $ N\in\mathbb{N} $ as above.
Observe that
\[
	I _n (x)\subseteq\left\{ v\in [0,1] : | v - x |\leqslant ( e ^{\varepsilon }\tilde{\lambda} ) ^n ,\, | W _\ttheta (v) - W _\ttheta (x) |\leqslant\overline{C}\, ( e ^{\varepsilon }\tilde{\lambda} ) ^n \right\}  
\]
for all $x\in D _N $ and $ n\geqslant N$, where $\tilde{\lambda} := e ^{\int\log\lambda\, d\nu}$.
By choosing $\beta =  e ^{\varepsilon }\tilde{\lambda} $ and $ C =\overline{C} $ in the above formula for $\underline{d} _{\mu _\ttheta}$, we have
\begin{eqnarray*}
	\underline{d} _{\mu _\ttheta} ( x , W _\ttheta (x) )	& = &	\liminf _{n\to\infty}\frac{\log\nu\left\{ v\in [0,1] : | x - v |\leqslant ( e ^{\varepsilon }\tilde{\lambda} ) ^n  ,\, | W _\ttheta (v) - W _\ttheta (x) |\leqslant\overline{C}\, ( e ^{\varepsilon }\tilde{\lambda} ) ^n\right\} }{\log  ( e ^{\varepsilon }\tilde{\lambda} ) ^n  }\\
	&\leqslant &	\liminf _{n\to\infty}\frac{\log\nu ( I _n (x) )}{\log  ( e ^{\varepsilon }\tilde{\lambda} ) ^n }\quad\leqslant\frac{h _\tau (\nu) + \varepsilon}{-\int\log\lambda\, d\nu -\varepsilon} 
\end{eqnarray*}
for all $ x\in D _N  $ and $ N\in\mathbb{N} $.
In view of Lemma \ref{lem:Barreira_cite}, letting $\varepsilon\to 0 $ finishes the proof.
\end{proof}

We turn to the lower bound for $\dim _H\mu _\ttheta$.
From now on, we always assume that $\nu\in\mathcal{P} _\tau (\J ) $ is a Gibbs measure and that $ g $ satisfies the critical point hypothesis and that $\tau _{|\J}$ is expansive.
We can choose the expansivity constant $\delexp >0$ so small that\footnote{Expansivity assumption excludes possible V-shaped branches over $\J$, i.e. there is no $x\in\J$ in which the left and right derivatives of $\tau :\R/\Z\to\I$ have opposite signs.} for any $x, v\in\J$
\[
		d _2 ( x , v ) \leqslant\delexp\quad\mbox{implies}\quad v\in I _1 ( x ),
\]
where $d _n ( x , v ) :=\max\{ d (\tau ^j x,\tau ^j v ) : j = 0 ,\ldots , n-1\} $ denotes the $n$-the Bowen metric for $n\in\N$.
In addition, let $ B _{n, r} ( x ) :=\{ u\in\J : d _n (x, u)\leqslant r\} $ denote the ball with respect to $ d _n $ with centre $ x\in\J $ and radius $ r > 0 $.
$B_r(x):=B_{0,r}(x)$.

For $\varepsilon\in ( 0 ,1 )$ and $N\in\N$, let $\mathcal{C} _{\varepsilon ,N}\subseteq\J$ be the set of those $ x $ for which hold
\begin{itemize}
\item $ B _{e ^{n (-\int\log|\tau '| d\nu -\varepsilon)} } ( x )\subseteq B _{n,\delexp} ( x )\subseteq B _{e ^ {n (-\int\log |\tau '| d\nu +\varepsilon)} } ( x ) $,
\item $  e ^{n (\int\log\lambda\, d\nu -\varepsilon ) }\leqslant\lambda ^n (x)\leqslant  e ^{n (\int\log\lambda\, d\nu +\varepsilon ) } $, and
\item $ r ^{\dim _H (\nu) +\varepsilon}\leqslant\nu (B _r(x))\leqslant r ^{\dim _H (\nu) -\varepsilon}  $
\end{itemize}
for all $ n\geqslant N$ and $ r > 0 $.

\begin{lemma}	\label{lem:C_full}
$\lim _{N\to\infty}\nu (\mathcal{C} _{\varepsilon, N}) = 1 $ for all $\varepsilon \in (0,1)$.
\end{lemma}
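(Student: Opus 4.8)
The plan is to recognize that $\mathcal{C}_{\varepsilon,N}$ is an increasing sequence of sets (in $N$), so that $\nu\left(\bigcup_N \mathcal{C}_{\varepsilon,N}\right) = \lim_{N\to\infty}\nu(\mathcal{C}_{\varepsilon,N})$, and then to show that $\nu\left(\bigcup_N \mathcal{C}_{\varepsilon,N}\right) = 1$. The latter amounts to proving that $\nu$-a.e.\ $x\in\J$ lies in $\mathcal{C}_{\varepsilon,N}$ for some (equivalently, all sufficiently large) $N$. Since the defining conditions are three simultaneous requirements that must hold for all $n\geqslant N$ and all $r>0$, it suffices to show that each of the three holds eventually, $\nu$-a.e.; the conjunction then holds eventually $\nu$-a.e.\ as well, being a finite intersection of full-measure events.

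The middle condition, $e^{n(\int\log\lambda\,d\nu-\varepsilon)}\leqslant\lambda^n(x)\leqslant e^{n(\int\log\lambda\,d\nu+\varepsilon)}$ for all $n\geqslant N$, is exactly the statement that $\frac{1}{n}\log\lambda^n(x)=\frac{1}{n}\sum_{k=0}^{n-1}\log\lambda(\tau^k x)\to\int\log\lambda\,d\nu$, which follows from Birkhoff's ergodic theorem applied to the continuous (hence $\nu$-integrable) function $\log\lambda$, using that $\nu$ is ergodic by Lemma \ref{lem:eqisGibbs}. For the third condition, $r^{\dim_H(\nu)+\varepsilon}\leqslant\nu(B_r(x))\leqslant r^{\dim_H(\nu)-\varepsilon}$ for all $r>0$, one uses that by Lemma \ref{lem:gibbs_props} the lower pointwise dimension $\underline{d}_\nu(x)$ equals $\dim_H\nu=\frac{h_\tau(\nu)}{\int\log|\tau'|\,d\nu}$ for $\nu$-a.e.\ $x$; one must also control the upper pointwise dimension. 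The upper bound $\nu(B_r(x))\leqslant r^{\dim_H\nu-\varepsilon}$ can likewise be obtained from the Gibbs property together with the sandwich $B_{\dist(\partial I_n(x),x)}(x)\subseteq I_n(x)\subseteq B_{|I_n(x)|}(x)$, Lemma \ref{lem:tau_distortion}, and the Shannon--McMillan--Breiman convergence of $-\frac{1}{n}\log\nu(I_n(x))$, exactly as in the proof of Lemma \ref{lem:gibbs_props}; here one should note the quantifier is \enquote{for all $r>0$} rather than merely a $\liminf$, but the inequalities can be forced to hold for all $r$ by shrinking the exponent slightly and absorbing the finitely many small-$n$ exceptions, since any fixed $x$ with positive distance to $\J$-neighbours gives uniform control for bounded $n$. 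The first condition, the Bowen-ball sandwich $B_{e^{n(-\int\log|\tau'|d\nu-\varepsilon)}}(x)\subseteq B_{n,\delexp}(x)\subseteq B_{e^{n(-\int\log|\tau'|d\nu+\varepsilon)}}(x)$, follows from the observation that under the expansivity assumption $B_{n,\delexp}(x)$ is (up to a bounded factor) the monotonicity interval $I_n(x)$: indeed $v\in B_{n,\delexp}(x)$ forces $v\in I_1(\tau^j x)$ for each $j<n$ by the choice of $\delexp$, hence $v\in I_n(x)$, while conversely $I_n(x)\subseteq B_{n,\delexp}(x)$ for large $n$ by Lemma \ref{lem:tau_distortion} since the iterates stay within a single branch; combining this with $|I_n(x)|\asymp |(\tau^n)'(x)|^{-1}$ and the Birkhoff convergence $\frac{1}{n}\log|(\tau^n)'(x)|\to\int\log|\tau'|\,d\nu$ gives the required two-sided radius bounds, again needing the reduction from $\liminf/\limsup$ statements to \enquote{for all $n\geqslant N$}.

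The main obstacle will be the careful handling of the quantifier structure: each condition is naturally a statement about a limit or limit inferior holding $\nu$-a.e., but the definition of $\mathcal{C}_{\varepsilon,N}$ demands the two-sided estimates to hold for \emph{every} $n\geqslant N$ and \emph{every} $r>0$, so one must argue that the a.e.-convergence of the relevant Birkhoff/SMB averages, together with the uniform distortion bounds, allows one to choose $N=N(x)$ (depending on $x$) beyond which the estimates hold for all larger $n$, and that the small-$r$ behaviour is governed by these large-$n$ estimates via the monotone interpolation $|I_{n+1}(x)|/|I_n(x)|\in[\delta_0,\delta_0^{-1}]$ from Lemma \ref{lem:tau_distortion}. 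Once this bookkeeping is in place, monotonicity of $N\mapsto\mathcal{C}_{\varepsilon,N}$ and continuity of measure from below deliver the claim.
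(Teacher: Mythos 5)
Your overall structure (treat the three defining conditions separately, get an $x$-dependent threshold $N_x$ from a.e.\ convergence, then use monotonicity in $N$ and continuity from below) is exactly the paper's, and your treatment of the second and third conditions via Birkhoff, Shannon--McMillan--Breiman and Lemma \ref{lem:gibbs_props} matches what the paper does (the paper dismisses these two as immediate). The problem is the first condition, which is the only part of the paper's proof with real content, and there your argument has a genuine gap. The claimed inclusion $I_n(x)\subseteq B_{n,\delexp}(x)$ is false in general: for $v\in I_n(x)$ near the far endpoint of the cylinder, $\tau^{n-1}v$ and $\tau^{n-1}x$ lie in the same branch interval but can be a definite distance apart (of the order of $|I_{\kappa(\tau^{n-1}x)}|$), which typically exceeds $\delexp$; no ``for large $n$'' saves this, and distortion estimates only control ratios of derivatives, not this last-coordinate separation. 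More importantly, even after replacing that false inclusion by the correct one, the inner inclusion $B_r(x)\subseteq B_{n,\delexp}(x)$ can only be guaranteed for $r\leqslant\min\{\dist(\partial I_n(x),x),\,D^{-1}\delexp|I_n(x)|\}$: a point $v$ with $d(x,v)\leqslant e^{n(-\int\log|\tau'|\,d\nu-\varepsilon)}$ but lying on the other side of a boundary point of $I_n(x)$ is not controlled at all by branchwise mean-value/distortion arguments, since $\tau^j$ can separate the two sides macroscopically. So to get the required radius $e^{n(-\int\log|\tau'|\,d\nu-\varepsilon)}$ inside the Bowen ball for all large $n$, you must rule out, $\nu$-a.e., that $x$ comes super-exponentially (relative to $|I_n(x)|$) close to $\partial I_n(x)$ infinitely often.

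That boundary-distance control is precisely the first claim of Lemma \ref{lem:gibbs_props}, namely $\log\dist(\partial I_n(x),x)/\log|I_n(x)|\to 1$ for $\nu$-a.e.\ $x$, proved by a Borel--Cantelli argument using the exponential decay of $\nu(\rho_0^n(\T))$ and $\nu(\rho_{\ell-1}^n(\T))$; the paper's proof of Lemma \ref{lem:C_full} sets $r_n(x):=\min\{\dist(\partial I_n(x),x),D^{-1}\delexp|I_n(x)|\}$, shows $B_{r_n(x)}(x)\subseteq B_{n,\delexp}(x)\subseteq B_{|I_n(x)|}(x)$, and then concludes from $\lim_n\frac{1}{n}\log r_n(x)=-\int\log|\tau'|\,d\nu$ a.e. You do invoke the sandwich $B_{\dist(\partial I_n(x),x)}(x)\subseteq I_n(x)\subseteq B_{|I_n(x)|}(x)$, but only for the measure-of-balls condition, not where it is indispensable, i.e.\ for the Bowen-ball lower inclusion. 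As written, your plan for the first condition would not go through; fixing it requires exactly this missing ingredient.
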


\begin{proof}
It suffices to show that three conditions are satisfied for all $n\geqslant N_x$ for $\nu$-a.a. $x\in\J$, where $x\mapsto N_x$ is some function.
Moreover, the three conditions can be separately considered, that is, if we know such a $N_x$ for each condition, then their maximum is the one we need.
The existence of such $N_x$ for the second and third conditions is immediate since, by Birkhoff's ergodic theorem and Lemma \ref{lem:gibbs_props},
\[
	\lim _{n\to\infty}\frac{\log\lambda ^n (x)}{n} =\int\log\lambda\, d\nu\quad
\mbox{and}
\quad
	\lim _{r\to 0}\frac{\log\nu ( B _r (x) )}{\log r} =\dim _H\nu 
\]
for $\nu$-a.a. $x$.
Thus we focus on the first condition.
W.l.o.g. assume $N\geqslant 2$ so large that $d(v,w)=|v-w|$ for all $w\in I_N(v)$ and $v\in\J$.
Let $x\in\J$ and $n\geqslant N$.
For $v\in B _{\delexp,n}(x)$, we have $v\in I _{n-2} (x)$, since $\tau ^jv\in I_1(\tau ^jx)$ for $j=0,\ldots n-2$ due to the choice of $\delexp$.
In particular, $d(x,v)\leqslant |I _{n-2} (x)|\leqslant D/|(\tau ^{n-2})'(x)|$ by Lemma \ref{lem:tau_distortion}.
Thus we have $B _{n,\delexp} ( x )\subseteq B _{|I_n(x)|} (x) $ for all $x\in\J$ and $n\geqslant N$.
Next, let $v\in \J\setminus B _{\delexp,n}(x)$, so there is some $ 0\leqslant  j\leqslant n-1 $ such that $ d (\tau ^j x ,\tau ^j v ) >\delexp $.
We consider the following two cases, separately.
If $v\not\in I _n (x)$, then we have $ d ( x , v )\geqslant\dist (\partial I _n (x) , x )$.
If $v\in I _n (x)$, by the mean value theorem and Lemma \ref{lem:tau_distortion},
\[
	D |I_n(x)|^{-1} d ( x , v )\geqslant D |I_j(x)|^{-1} d ( x , v )\geqslant |(\tau ^j )'(\tilde{v} )| d ( x , v ) = d (\tau ^j x ,\tau ^j v ) >\delexp
\]
for some $\tilde{v}\in I _n (x )$.
Hence, together with the above observation, we have
\[
	B _{r _n (x)} (x)	\subseteq B _{n,\delexp} ( x )\subseteq B _{|I_n(x)|}(x)
\]
for all $x\in\J$ and $n\geqslant N$, where $r _n (x) :=\min\left\{\dist (\partial I _n (x) , x ) ,D ^{-1}\delexp |I_n(x)| \right\}$.
This finishes the proof together with the fact that
\[
	\lim _{n\to\infty}\frac{\log r _n (x)}{n} =\lim _{n\to\infty}\frac{\log |I_n (x)|}{n} =  -\int\log |\tau '|d\nu 
\]
for $\nu$-a.a. $x$, which follows from Birkhoff's ergodic theorem and Lemmas \ref{lem:tau_distortion}, \ref{lem:gibbs_props}.
\end{proof}

Let $\mathcal{C} _\varepsilon :=\mathcal{C} _{\varepsilon ,N_\varepsilon}$ for some fixed $N_\varepsilon$ so that it has a positive $\nu$ measure.
We consider then the restricted measure $\nu _\varepsilon :=\nu (\mathcal{C} _\varepsilon\cap\;\cdot\; ) $ and its lift
\[
	\mu _{\ttheta,\varepsilon} :=\nu\left\{ x\in\mathcal{C} _\varepsilon : ( x , W _\ttheta  ( x )  )\in\;\cdot\;\right\} .
\]
Recall that $s$-energy of a Borel measure $\mu $ on a metric space $ (\mathcal{E} , d _\mathcal{E} ) $ is defined by
\[
	I _s (\mu ) =\iint\frac{d\mu (x) d\mu (v)}{d _\mathcal{E} ( x,v) ^s} .
\]
Furthermore, $ I _s (\mu ) <\infty $ implies that $\underline{d} _\mu\geqslant s $ holds $\mu $-almost surely (see e.g. \cite[Section 4.3]{Falconer05}).

Observe that
\begin{eqnarray}\label{eq:E_calc}
	\int I _s (\mu _{\ttheta,\varepsilon })\, d\ttheta & = &	\iiint\frac{ d\nu _\varepsilon (x)\, d\nu _\varepsilon (v)\, d\ttheta }{(d(x,v) ^2 + ( W _\ttheta (x) - W _\ttheta ( v) ) ^2 ) ^{s/2} }\nonumber\\
	&\leqslant &\frac{1}{\delexp ^s}	+\iiint _{\{(x, v)\in\Delta _{\delexp}\} }\frac{ d\nu _\varepsilon (x)\, d\nu _\varepsilon (v)\, d\ttheta }{(d(x,v) ^2 + ( W _\ttheta (x) - W _\ttheta ( v) ) ^2 ) ^{s/2} }\nonumber\\
	& =: &	\frac{1}{\delexp ^s} + E _{\varepsilon, s}(\ttheta) ,	
\end{eqnarray}
where $\Delta _{\delexp} :=\{ (x, v)\in\J ^2 : d(x, v)\leqslant\delexp\} $ is the $\delexp $-diagonal set.

\begin{lemma}	\label{lem:potential_helper}
If $ E _{\varepsilon, s}(\ttheta) <\infty $ for some $s>0$ and $\varepsilon\in (0,1)$, then $\dim _H \mu _\ttheta\geqslant s$ for a.a. $\ttheta$.
\end{lemma}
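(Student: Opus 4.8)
The plan is to run the classical potential-theoretic lower bound for the Hausdorff dimension of a measure, to use Fubini's theorem to upgrade the averaged estimate \eqref{eq:E_calc} into an almost sure statement, and then to transfer the resulting bound from the truncated lift $\mu_{\ttheta,\varepsilon}$ to the full lift $\mu_\ttheta$.

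First I would observe that, by \eqref{eq:E_calc} and the hypothesis $E_{\varepsilon,s}(\ttheta)<\infty$, the nonnegative function $\ttheta\mapsto I_s(\mu_{\ttheta,\varepsilon})$ has finite integral over $\T^{\N_0}$; its measurability follows from Tonelli applied to the jointly measurable kernel $(x,v,\ttheta)\mapsto(d(x,v)^2+(W_\ttheta(x)-W_\ttheta(v))^2)^{-s/2}$. Hence there is a set $Z\subseteq\T^{\N_0}$ of full measure with $I_s(\mu_{\ttheta,\varepsilon})<\infty$ for every $\ttheta\in Z$.

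For each such $\ttheta$, the $s$-energy criterion recalled above gives $\underline{d}_{\mu_{\ttheta,\varepsilon}}\geqslant s$ $\mu_{\ttheta,\varepsilon}$-almost everywhere, and then Lemma \ref{lem:Barreira_cite}, which identifies the Hausdorff dimension of a measure with the essential supremum of its lower pointwise dimension, yields $\dim_H\mu_{\ttheta,\varepsilon}\geqslant s$. This is meaningful since $\mu_{\ttheta,\varepsilon}$ is a nonzero measure: $\nu(\mathcal{C}_\varepsilon)=\nu(\mathcal{C}_{\varepsilon,N_\varepsilon})>0$ by the choice of $N_\varepsilon$. Finally, since $\mathrm{Id}_\T\times W_\ttheta$ is injective and
\[
	\mu_{\ttheta,\varepsilon}(A)=\nu\bigl(\mathcal{C}_\varepsilon\cap(\mathrm{Id}_\T\times W_\ttheta)^{-1}A\bigr)\leqslant\nu\bigl((\mathrm{Id}_\T\times W_\ttheta)^{-1}A\bigr)=\mu_\ttheta(A)
\]
for every Borel set $A$, the measure $\mu_{\ttheta,\varepsilon}$ is the restriction of $\mu_\ttheta$ to the graph of $W_\ttheta$ over $\mathcal{C}_\varepsilon$; in particular $\mu_{\ttheta,\varepsilon}^\ast\leqslant\mu_\ttheta^\ast$, so the infimum defining $\dim_H\mu_{\ttheta,\varepsilon}$ ranges over a larger family of sets than that defining $\dim_H\mu_\ttheta$, whence $\dim_H\mu_{\ttheta,\varepsilon}\leqslant\dim_H\mu_\ttheta$. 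Combining this with $\dim_H\mu_{\ttheta,\varepsilon}\geqslant s$ proves $\dim_H\mu_\ttheta\geqslant s$ for all $\ttheta\in Z$, that is, for almost all $\ttheta$.

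This argument is essentially routine, so I do not expect a serious obstacle; the only delicate points are the measurability needed to invoke Tonelli/Fubini for $\ttheta\mapsto I_s(\mu_{\ttheta,\varepsilon})$ and the verification that $\mu_{\ttheta,\varepsilon}$ is a genuine restriction of $\mu_\ttheta$, which is what makes the monotonicity $\dim_H\mu_{\ttheta,\varepsilon}\leqslant\dim_H\mu_\ttheta$ applicable. The genuinely hard work — establishing $E_{\varepsilon,s}(\ttheta)<\infty$ for the relevant exponent $s$, where the critical point hypothesis on $g$ and the randomisation of $\ttheta$ are exploited — lies outside this lemma and is carried out in Lemmas \ref{lem:lower1} and \ref{lem:lower2}.
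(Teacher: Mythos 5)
Your proposal is correct, and up to the last step it coincides with the paper's argument: both use \eqref{eq:E_calc} together with Fubini/Tonelli to get $I_s(\mu_{\ttheta,\varepsilon})<\infty$ for almost every $\ttheta$, and then the $s$-energy criterion to conclude $\underline{d}_{\mu_{\ttheta,\varepsilon}}\geqslant s$ holds $\mu_{\ttheta,\varepsilon}$-almost everywhere. Where you diverge is in transferring this from the truncated lift to $\mu_\ttheta$: the paper invokes the Borel density theorem to show $\lim_{r\to 0}\mu_{\ttheta,\varepsilon}(B_r(x,y))/\mu_\ttheta(B_r(x,y))=1$ at $\mu_\ttheta$-a.e.\ point of $\mathcal{C}_\varepsilon\times\R$, hence $\underline{d}_{\mu_\ttheta}=\underline{d}_{\mu_{\ttheta,\varepsilon}}\geqslant s$ there, and only then applies Lemma \ref{lem:Barreira_cite} to $\mu_\ttheta$; you instead apply Lemma \ref{lem:Barreira_cite} to $\mu_{\ttheta,\varepsilon}$ (correctly noting $\nu(\mathcal{C}_\varepsilon)>0$) and use the elementary monotonicity that $\mu_{\ttheta,\varepsilon}\leqslant\mu_\ttheta$ implies $\dim_H\mu_{\ttheta,\varepsilon}\leqslant\dim_H\mu_\ttheta$, which is immediate from the definition of the Hausdorff dimension of a measure (every set of full $\mu_\ttheta$-measure has full $\mu_{\ttheta,\varepsilon}$-measure); injectivity of $\mathrm{Id}_\T\times W_\ttheta$ is not even needed for that inequality. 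Your route is more elementary and fully proves the lemma as stated; what the paper's density-point argument buys is the slightly stronger pointwise conclusion $\underline{d}_{\mu_\ttheta}(x,W_\ttheta(x))\geqslant s$ for $\nu$-a.a.\ $x\in\mathcal{C}_\varepsilon$, which is the form implicitly quoted later (in Lemma \ref{lem:lower2} and in the conclusion of the proof of Lemma \ref{lem:lift}), so if one replaced the paper's proof by yours, those later statements would either need the density argument reinstated or be rephrased in terms of $\dim_H$ only.
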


\begin{proof}
In view of Lemma \ref{lem:Barreira_cite}, it suffices to show for a.a. $\ttheta$ that
\[
	\nu \{ x\in\mathcal{C} _\varepsilon :\underline{d} _{\mu _\ttheta} (x, W _\ttheta (x))<s\}=0.
\]
As $ E _{\varepsilon, s} <\infty  $, we have $ I _s (\mu _{\ttheta,\varepsilon }) <\infty $ by \eqref{eq:E_calc} for a.a. $\ttheta $. In the following, let $\ttheta $ be such a parameter.
Observe that the boundedness of $s$-energy implies that $\underline{d} _{\mu _{\ttheta,\varepsilon }} (x,y)\geqslant s $ for $\mu _{\ttheta,\varepsilon } $-almost all $(x,y)\in\J\times\mathbb{R} $, i.e. $\underline{d} _{\mu _{\ttheta,\varepsilon }} ( x , W _\ttheta (x) )\geqslant s $ for $\nu $-almost all $ x\in\mathcal{C} _\varepsilon $.
On the other hand, by Borel density theorem we have
\[
	\lim _{r\to 0}\frac{\mu _{\ttheta,\varepsilon } ( B _r (x,y))}{\mu _\ttheta ( B _r (x, y) )} =\lim _{r\to 0}\frac{\mu _\ttheta  ( B _r (x,y)\cap (\mathcal{C} _\varepsilon\times\R) )}{\mu _\ttheta ( B _r (x, y) )} = 1
\]
for $\mu _\ttheta $-almost all $ (x,y)\in\mathcal{C} _\varepsilon\times\R $, so $\underline{d} _{\mu _\ttheta} ( x , W _\ttheta (x) ) =\underline{d} _{\mu _{\ttheta,\varepsilon }} ( x , W _\ttheta (x) )\geqslant s $ for $\nu $-a.a. $ x\in\mathcal{C} _\varepsilon $.
\end{proof}

For any $(x,v)\in\J^2$, let $h _{x,v} $ denote the density function of the random variable $\ttheta\mapsto W _\ttheta (x)-W _\ttheta(v) $ with respect to the Lebesgue measure on $\R$.

The next key lemma is a slightly deformed \cite[Lemma 4.2]{Moss12}, which relies on a specific geometrical property deduced from the assumption of $g$ and the expansivity of $\tau$.

\begin{lemma}[Key lemma]	\label{lem:moss}
There is a constant $C _h >0$ such that
\[
	\|h _{x,v}\| _\infty\leqslant\frac{C _h}{\lambda ^n (x)}
\]
for all $v\not\in B _{n,\delexp}(x)\setminus B _{n+1,\delexp}(x)$, $n\in\N $ and $x\in\J$.
\end{lemma}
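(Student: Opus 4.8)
The plan is to follow the proof of \cite[Lemma 4.2]{Moss12}, adapting the steps that use linearity of $\tau$ in the classical setting to the cookie‑cutter situation. Since $\lambda^k(x)=\prod_{j<k}\lambda(\tau^j x)$ etc., we may write
\[
	W_\ttheta(x)-W_\ttheta(v)=\sum_{k=0}^\infty Y_k,\qquad Y_k=Y_k(\vartheta_k):=\lambda^k(x)\,g(\tau^k x+\vartheta_k)-\lambda^k(v)\,g(\tau^k v+\vartheta_k).
\]
The $Y_k$ depend on pairwise distinct coordinates of $\ttheta$, which is i.i.d.\ uniform, so they are independent and $h_{x,v}=\ast_{k}h_k$ is the convolution of the densities $h_k$ of the $Y_k$. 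As every $h_k$ is a probability density ($\|h_k\|_1=1$), for any finite index set $K$ we have $\|h_{x,v}\|_\infty\leqslant\bigl\|\ast_{k\in K}h_k\bigr\|_\infty$. So it suffices to (i) bound a single $h_k$ in a Lebesgue space better than $L^1$ using the critical point hypothesis, (ii) select a bounded number of ``good'' indices $K$ clustered near level $n$, and (iii) combine (i) over $K$ by Young's convolution inequality.

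For (i): if $\tau^k x\neq\tau^k v$, set $a:=\tau^k x-\tau^k v\bmod 1\in(0,1)$ and $c:=\lambda^k(v)/\lambda^k(x)$, so that $Y_k=\lambda^k(x)\,\psi_{a,c}(\tau^k v+\vartheta_k)$ with $\psi_{a,c}(u):=g(a+u)-c\,g(u)$. By the critical point hypothesis $\psi_{a,c}$ is $C^\infty$, non‑constant, and all its critical points have order $<r_0$; a standard sublevel‑set estimate — equivalently van der Corput's lemma applied to $\int_{\T}e^{i\xi\psi_{a,c}}$ — then gives, uniformly over the effectively compact range of the parameters $(a,c)$ occurring below,
\[
	\|h_k\|_{r_0/(r_0-1)}\leqslant C\,\bigl(\lambda^k(x)\,\|\psi_{a,c}\|_\infty\bigr)^{-1/r_0},\qquad\text{hence}\qquad \|h_k\|_{r_0/(r_0-1)}\leqslant C'\,\lambda^k(x)^{-1/r_0}
\]
whenever $\tau^k x$ and $\tau^k v$ are $\delexp$‑separated, since then $\|\psi_{a,c}\|_\infty$ is bounded below. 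The distortion argument behind Lemma~\ref{lem:tau_distortion}, applied to the piecewise $\alpha$‑Hölder function $\log\lambda$, moreover yields $\lambda^k(x)\asymp\lambda^k(v)$ whenever $v\in I_k(x)$, which keeps $c$ in a compact set in the regime that matters.

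For (ii) and (iii): from the hypothesis relating $v$ to the level‑$n$ Bowen annulus around $x$ and from expansivity of $\tau_{|\J}$ — recall that $\delexp$ was chosen so that $d_2(u,w)\leqslant\delexp$ forces $u,w$ into a common monotonicity interval — one shows that once the $\tau$‑orbits of $x$ and $v$ separate they re‑separate to distance $>\delexp$ after a uniformly bounded number of iterates, using that $\tau$ has full‑branch inverse on each monotonicity interval. This produces $r_0$ indices $k_1<\dots<k_{r_0}$ with $|k_i-n|$ bounded, hence $\lambda^{k_i}(x)\asymp\lambda^n(x)$ by Lemma~\ref{lem:tau_distortion}, at which $\tau^{k_i}x,\tau^{k_i}v$ are $\delexp$‑separated. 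Then Young's inequality together with (i) gives
\[
	\|h_{x,v}\|_\infty\leqslant\bigl\|h_{k_1}\ast\dots\ast h_{k_{r_0}}\bigr\|_\infty\leqslant\prod_{i=1}^{r_0}\|h_{k_i}\|_{r_0/(r_0-1)}\leqslant C''\prod_{i=1}^{r_0}\lambda^{k_i}(x)^{-1/r_0}\leqslant\frac{C_h}{\lambda^n(x)};
\]
the Fourier version is identical, noting that $\prod_{i=1}^{r_0}\min\{1,(|\xi|\lambda^{k_i}(x))^{-1/r_0}\}$ decays faster than $|\xi|^{-1}$, so $\int_\R|\widehat{h_{x,v}}|\lesssim(\min_i\lambda^{k_i}(x))^{-1}\asymp\lambda^n(x)^{-1}$.

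I expect the main obstacle to be step (ii): upgrading the purely qualitative expansivity of $\tau_{|\J}$ to a quantitative statement that the orbits of $x$ and $v$ are $\delexp$‑separated at $r_0$ times all comparable to $n$, since for a genuinely fractal repeller one cannot argue digit‑by‑digit as in the linear model of \cite{Moss12}. The uniformity of the sublevel‑set bound in step (i) over $(a,c)$ — in particular its behaviour as $\tau^k x-\tau^k v\to 0$ or $c\to 1$ — also needs attention, but follows routinely from the compactness packaged into the critical point hypothesis (which forbids $\psi_{a,c}\equiv\mathrm{const}$).
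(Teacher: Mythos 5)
Your overall outline (write $W_\ttheta(x)-W_\ttheta(v)$ as a sum of independent terms $Y_k(\vartheta_k)$, bound a single density in $L^p$ via the critical point hypothesis, and convolve finitely many well-separated terms by Young's inequality) is indeed the intended argument: the paper gives no self-contained proof here, it simply asserts that \cite[Lemma 4.2]{Moss12} translates verbatim. The genuine gap is in your step (ii), which you yourself flag as the main obstacle: the mechanism you propose for it is false. After the first separation time $n$ (i.e.\ $d(\tau^nx,\tau^nv)>\delexp$ while $d(\tau^jx,\tau^jv)\leqslant\delexp$ for $j<n$), nothing forces the orbits to re-separate within a bounded number of iterates: $\tau$ is not injective, and since every branch maps onto $(0,1)$, two $\delexp$-separated points in different branches can have \emph{equal} images, after which the orbits coincide forever (take the doubling map, $v=x+1/2$, $\lambda$ constant: all summands with $k\geqslant 1$ cancel identically). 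Expansivity only says that distinct points separate at \emph{some} time — that is exactly how $n$ was defined — and gives no control on later re-separation times, "full branches" if anything work against you. Moreover, even if late re-separation occurred at indices $k_i\gg n$, it would be useless: $\lambda^{k_i}(x)\leqslant\lambda^n(x)$, so $\prod_i\lambda^{k_i}(x)^{-1/r_0}$ is much larger than $\lambda^n(x)^{-1}$ and the resulting bound is weaker than claimed. So the selection of $r_0$ separated indices with $|k_i-n|$ bounded, as you describe it, does not exist in general.

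The correct geometric input sits at the times immediately \emph{before} $n$. For $j\leqslant n-2$ the pair $(\tau^jx,\tau^jv)$ is $\delexp$-close at two consecutive times, hence lies in a single monotonicity interval by the choice of $\delexp$, so one step expands the distance by at most $\sup|\tau'|$; and if $\tau^{n-1}x,\tau^{n-1}v$ lie in different branches at distance $\epsilon$, both are within $\epsilon$ of branch endpoints, which are mapped into $\{0,1\}$ (a single torus point), whence $d(\tau^nx,\tau^nv)\leqslant 2\sup|\tau'|\,\epsilon$. Combining the two observations gives $d(\tau^{n-1-i}x,\tau^{n-1-i}v)\geqslant\delexp\,(2\sup|\tau'|)^{-1}(\sup|\tau'|)^{-i}$, so the indices $n-1,n-2,\dots$ in a window of bounded length below $n$ are uniformly $\delta'$-separated, satisfy $\lambda^{k}(x)\asymp\lambda^n(x)$, and keep $c=\lambda^k(v)/\lambda^k(x)$ in a compact set via distortion, which is what your steps (i) and (iii) need (the case of small $n$, where fewer than $r_0$ such indices exist, still requires a separate remark). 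Two secondary points: at the borderline exponent, a critical point of order exactly $r_0-1$ puts the single-term density only in weak $L^{r_0/(r_0-1)}$, and the endpoint Young inequality you invoke can fail (two aligned one-sided inverse-square-root singularities produce a logarithmic blow-up), so one should either use $r_0+1$ indices or argue in Lorentz spaces; and the separation you obtain this way is a fixed constant $\delta'<\delexp$, not $\delexp$ itself, so the uniform sublevel-set bound must be stated for $a$ ranging over a compact subset of $(0,1)$ determined by $\delta'$.
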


\begin{proof}
The proof of the aforementioned lemma can be directly translated with the following remarks:
\begin{itemize}
\item The points $ z , x, y $ in their proof correspond with $ x , x , v $  here.
\item Compared to $B _{n,\delexp}(x)\setminus B _{n+1,\delexp}(x)$, their counterpart $X ^r _n(z)$ is further restricted to $ B _{n,2\delexp}(z)\times B _{n,2\delexp}(z)$, however, this fact does not matter in this part.
\item We allow here possible non-differentiability of $\tau $ and $\lambda $ at the end points of Markov partition. This does not matter, as only $ g\in C ^\infty (\R /\Z ) $ is the crucial assumption here.
\end{itemize}
\end{proof}

\begin{lemma}	\label{lem:lower1}
Suppose that $ -\int\log\lambda\, d\nu < h _\tau (\nu ) $.
Then we have for a.a. $\ttheta$ that
\[
	\dim _H \mu _\ttheta\geqslant\dim _H\nu + 1 +\frac{\int\log\lambda\, d\nu}{\int\log |\tau '|\, d\nu}.
\]
\end{lemma}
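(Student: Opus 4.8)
The strategy is to apply the energy method encapsulated in Lemma~\ref{lem:potential_helper}: it suffices to exhibit some $s$ close to $\dim_H\nu + 1 + \frac{\int\log\lambda\,d\nu}{\int\log|\tau'|\,d\nu}$ for which $E_{\varepsilon,s}(\ttheta)<\infty$ for almost all $\ttheta$, which by Fubini reduces to showing $\int E_{\varepsilon,s}(\ttheta)\,d\ttheta<\infty$. Writing out this triple integral over $\Delta_{\delexp}$, the inner integration over $\ttheta$ for fixed $(x,v)$ is $\int_{\R}(d(x,v)^2+t^2)^{-s/2}\,h_{x,v}(t)\,dt$, and this is where the Key Lemma~\ref{lem:moss} enters. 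Decompose the diagonal strip according to the ``Bowen shell'' containing $v$: for $n\geq 1$ let $A_n(x):=B_{n,\delexp}(x)\setminus B_{n+1,\delexp}(x)$, so that $\Delta_{\delexp}=\bigcup_{n\geq 0}A_n(x)$ (up to the first shell contributing a bounded term already split off in \eqref{eq:E_calc}). On $A_n(x)$ the Key Lemma gives $\|h_{x,v}\|_\infty\leqslant C_h/\lambda^n(x)$.

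The main estimate then runs as follows. On $A_n(x)$ we have $d(x,v)\leqslant\delexp$ and, for $x\in\mathcal C_\varepsilon$ and $n\geqslant N_\varepsilon$, the inclusion $B_{e^{n(-\int\log|\tau'|d\nu-\varepsilon)}}(x)\subseteq B_{n,\delexp}(x)$ forces $d(x,v)\gtrsim e^{n(-\int\log|\tau'|d\nu-\varepsilon)}$ whenever $v\notin B_{n,\delexp}(x)$ — so points of $A_n(x)$ are at distance roughly $e^{-n\int\log|\tau'|d\nu}$ from $x$, up to $e^{\pm n\varepsilon}$. Using $\|h_{x,v}\|_\infty\leqslant C_h/\lambda^n(x)\leqslant C_h e^{-n(\int\log\lambda\,d\nu-\varepsilon)}$ and the elementary bound $\int_{\R}(a^2+t^2)^{-s/2}\,dt\lesssim a^{1-s}$ for $s>1$, the inner $\ttheta$-integral over $A_n(x)$ is bounded by $C_h e^{-n(\int\log\lambda\,d\nu-\varepsilon)}\cdot\big(e^{n(-\int\log|\tau'|d\nu-\varepsilon)}\big)^{1-s}\cdot\nu_\varepsilon(A_n(x))$. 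For the $\nu_\varepsilon$-mass of the shell, the third defining property of $\mathcal C_\varepsilon$ gives $\nu(A_n(x))\leqslant\nu(B_{n,\delexp}(x))\leqslant\nu\big(B_{e^{n(-\int\log|\tau'|d\nu+\varepsilon)}}(x)\big)\leqslant e^{n(-\int\log|\tau'|d\nu+\varepsilon)(\dim_H\nu-\varepsilon)}$. Multiplying and summing over $n\geqslant N_\varepsilon$, the series $\int E_{\varepsilon,s}(\ttheta)\,d\ttheta$ converges provided the exponential rate is negative, i.e.
\[
	-\int\log\lambda\,d\nu + (s-1)\int\log|\tau'|\,d\nu - (\dim_H\nu)\int\log|\tau'|\,d\nu < 0
\]
after letting $\varepsilon\to 0$; equivalently $s<\dim_H\nu + 1 + \frac{\int\log\lambda\,d\nu}{\int\log|\tau'|\,d\nu}$. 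Since the hypothesis $-\int\log\lambda\,d\nu<h_\tau(\nu)$ together with Lemma~\ref{lem:gibbs_props} guarantees this target exceeds $1$, we may indeed take $s>1$ as required for the kernel bound, and also $s\leqslant$ the other candidate $\frac{h_\tau(\nu)}{\int\log|\tau'|\,d\nu}$. Thus $\int E_{\varepsilon,s}(\ttheta)\,d\ttheta<\infty$, hence $E_{\varepsilon,s}(\ttheta)<\infty$ a.s., and Lemma~\ref{lem:potential_helper} yields $\dim_H\mu_\ttheta\geqslant s$ for a.a.\ $\ttheta$; letting $s$ increase to the target along a countable sequence and $\varepsilon\to 0$ along a countable sequence finishes the proof.

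\textbf{Main obstacle.} The delicate point is handling the exceptional shell $A_0(x)$ — i.e. $v\notin B_{1,\delexp}(x)$ — where the Key Lemma does not apply (it is excluded by the ``$v\notin B_{n,\delexp}(x)\setminus B_{n+1,\delexp}(x)$'' clause only at $n$, but one must check $n=0$, $v\in B_{0,\delexp}(x)\setminus B_{1,\delexp}(x)$, is covered). Here $d(x,v)$ is bounded below by a fixed constant $\delexp$, so the kernel $(d(x,v)^2+(\cdots)^2)^{-s/2}\leqslant\delexp^{-s}$ is already bounded, which is exactly the term $\delexp^{-s}$ pulled out in \eqref{eq:E_calc}; so in fact the only shells needing the Key Lemma are $n\geqslant 1$ and the argument above applies verbatim once one notes the choice of $\delexp$ ensures $B_{1,\delexp}(x)\subseteq I_1(x)$. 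A secondary technical care is that the distance lower bound $d(x,v)\gtrsim e^{n(-\int\log|\tau'|d\nu-\varepsilon)}$ on $A_n(x)$ requires $x\in\mathcal C_\varepsilon$ and $n\geqslant N_\varepsilon$, so one restricts from the outset to the measure $\nu_\varepsilon$ supported on $\mathcal C_\varepsilon$ — which is precisely why Lemma~\ref{lem:potential_helper} is phrased in terms of $\nu_\varepsilon$ and its lift $\mu_{\ttheta,\varepsilon}$ — and handles the finitely many $n<N_\varepsilon$ by the crude $\delexp^{-s}$ bound as well.
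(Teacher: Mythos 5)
Your proposal follows essentially the same route as the paper's own proof: Fubini plus the energy criterion of Lemma~\ref{lem:potential_helper}, decomposition of the $\delexp$-diagonal into Bowen shells $B_{n,\delexp}(x)\setminus B_{n+1,\delexp}(x)$, the Key Lemma~\ref{lem:moss} bound $\|h_{x,v}\|_\infty\leqslant C_h/\lambda^n(x)$ combined with the kernel estimate $\int_\R (a^2+t^2)^{-s/2}dt=K_s a^{1-s}$ for $s>1$, and the three defining properties of $\mathcal{C}_\varepsilon$ to control distance, density mass and $\lambda^n(x)$, giving convergence precisely for $s<S(\varepsilon)$ and then $\varepsilon\to0$. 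The only inaccuracy is in your side remark on the exceptional shells: for $1\leqslant n<N_\varepsilon$ one does \emph{not} have $d(x,v)\geqslant\delexp$ on $A_n(x)$ (a point just across a cylinder boundary can be Euclidean-close to $x$ yet Bowen-far), but these finitely many terms are still finite because $\|h_{x,v}\|_\infty\leqslant C_h(\inf\lambda)^{-N_\varepsilon}$ there and $\int_{B_{\delexp}(x)} d(x,v)^{1-s}\,d\nu(v)<\infty$ whenever $s-1<\dim_H\nu-\varepsilon$ (which holds for $s<S(\varepsilon)$) by the third property of $\mathcal{C}_\varepsilon$ — a technicality the paper's own proof also passes over silently.
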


\begin{proof}
We are going to show the assumption of Lemma \ref{lem:potential_helper}.
By Lemma \ref{lem:moss} and the substitution formula of integral, we have
\begin{eqnarray*}
	\int\frac{d\ttheta   }{(d(x,v) ^2 + ( W _\ttheta (x) - W _\ttheta ( v) ) ^2 ) ^{s/2} }	
	& = &	\int _\R\frac{h _{x, v} (z)\, d z   }{(d(x,v) ^2 + z ^2 ) ^{s/2} }\\
	& = &	d(x,v) ^{1-s}\int _\R\frac{h _{x,v} ( d(x,v) t )\, d t}{( 1 + t ^2) ^{s/2}}\\
	&\leqslant &	d(x,v) ^{1-s}\int _\R\frac{d t}{( 1 + t ^2) ^{s/2}}\,\| h _{x, v}\| _\infty\\
	& = &	d(x,v) ^{1-s}\, K _s\,\| h _{x, v}\| _\infty\quad\leqslant\quad 	\frac{K _s\,  C _h}{\lambda ^n (x)\, d(x,v) ^{s -1} } ,
\end{eqnarray*}
where $K _s :=\int _\R\frac{d t}{( 1 + t ^2 ) ^{s/2}}$ is well-defined for $s>1$.
Let $\varepsilon > 0 $ be small enough and let $ x\in\mathcal{C} _\varepsilon $.
By Fubini's lemma we have
\begin{eqnarray*}
		&& \iint _{B _\delexp (x)}\frac{d\nu _\varepsilon (v) }{(d(x,v) ^2 + ( W _\ttheta (x) - W _\ttheta ( v) ) ^2 ) ^{s/2} }\, d\ttheta \\
		&=&	\sum _{n =0} ^\infty\iint _{B _{n,\delexp} (x)\setminus B _{n+1,\delexp} (x)}\frac{d\nu _\varepsilon (v) }{(d(x,v) ^2 + ( W _\ttheta (x) - W _\ttheta ( v) ) ^2 ) ^{s/2} }\, d\ttheta\\
	&\leqslant &	K _s\, C _h\,\sum _{n=0} ^\infty\frac{1}{\lambda ^n (x)}\int _{B _{n,\delexp} (x)\setminus B _{n+1,\delexp} (x)}\frac{d\nu _\varepsilon (v)}{d(x,v) ^{s-1}}\\
		&\leqslant &	K _s\, C _h\,\sum _{n=0} ^\infty\frac{1}{\lambda ^n (x)}\int _{ B _{e ^{n(-\int\log |\tau '| d\nu +\varepsilon)} }(x)\setminus  B _{e ^{n (-\int\log |\tau '| d\nu -\varepsilon)} } (x) }\frac{d\nu _\varepsilon (v)}{d(x,v) ^{s-1}}\\
		&\leqslant &	K _s\, C _h\,\sum _{n=0} ^\infty\frac{1}{\lambda ^n (x)}\frac{\nu ( B _{e ^{n (-\int\log |\tau '| d\nu -\varepsilon)} } (x) ) }{e ^{n(s-1)(-\int\log |\tau '| d\nu +\varepsilon)} }\\
		&\leqslant &	K _s\, C _h\,\sum _{n=0} ^\infty\frac{\left( e ^{n(-\int\log |\tau '| d\nu +\varepsilon)}\right) ^{\dim _H\nu -\varepsilon} }{e ^{ n (\int\log\lambda d\nu -\varepsilon) + n (s-1) (-\int\log |\tau '| d\nu -\varepsilon)} } <\infty
\end{eqnarray*}
for all $s\in ( 1 , S (\varepsilon ) )$, where
\[
	  S (\varepsilon) := 1 +\frac{(\int\log |\tau '| d\nu -\varepsilon) (\dim _H\nu -\varepsilon) +\int\log\lambda d\nu -\varepsilon }{\int\log |\tau ' | d\nu +\varepsilon } .
\]
Observe that $\lim _{\varepsilon\to 0} S (\varepsilon )  =\dim _H\nu + 1 +\frac{\int\log\lambda d\nu}{\int\log |\tau '| d\nu} > 1$ by the assumption, so the $(1,S(\varepsilon))$ is not empty.
Finally, integrating the above inequalities related to $x$ by $\nu _{\ttheta ,\varepsilon}$, we obtain $ E _{\varepsilon , s} <\infty $ for all $ s\in ( 1 , S (\varepsilon ) ) $, and thus $\dim _H \mu _\ttheta\geqslant S(\varepsilon)$ by Lemma \ref{lem:potential_helper}.
Letting $\varepsilon\to 0$ finishes the proof.
\end{proof}

\begin{lemma}	\label{lem:lower2}
Suppose that $ -\int\log\lambda\, d\nu\geqslant h _\tau (\nu ) $.
Then we have $\mu _\ttheta$-a. that
\[
	\underline{d} _{\mu _\ttheta }\geqslant\frac{h _\tau (\nu )}{-\int\log \lambda \,d\nu}.
\]
\end{lemma}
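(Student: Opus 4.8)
\end{lemma}

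\begin{proof}
Set $ s _0:=\dfrac{h _\tau(\nu)}{-\int\log\lambda\,d\nu} $; note $ -\int\log\lambda\,d\nu>0 $, and the hypothesis of the lemma says precisely $ s _0\leqslant 1 $ (if $ h _\tau(\nu)=0 $ there is nothing to prove, so we may assume $ s _0\in(0,1] $). Since $ \mu _\ttheta=\nu\circ(\mathrm{Id} _\T\times W _\ttheta) ^{-1} $, we have $ \mu _\ttheta\big(B _r(x,W _\ttheta(x))\big)\leqslant\nu\{v\in\J: d(x,v)\leqslant r\mbox{ and }|W _\ttheta(x)-W _\ttheta(v)|\leqslant r\} $, and $ \underline{d} _{\mu _\ttheta}(x,W _\ttheta(x))=\liminf _{r\to 0}\dfrac{\log\mu _\ttheta(B _r(x,W _\ttheta(x)))}{\log r} $. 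The plan is to fix $ \varepsilon\in(0,1) $ and $ N\in\N $ and to prove, for every $ x\in\mathcal{C} _{\varepsilon,N} $, the expectation estimate
\[
	\int\mu _\ttheta\big(B _r(x,W _\ttheta(x))\big)\,d\ttheta\leqslant C(x)\,r ^{\,s _0-c\varepsilon}\qquad\text{for all small }r,
\]
with a constant $ c>0 $ independent of $ \varepsilon,N,x $. Granting this, Markov's inequality along $ r=2 ^{-k} $ together with the Borel--Cantelli lemma give $ \mu _\ttheta(B _{2^{-k}}(x,W _\ttheta(x)))\leqslant 2 ^{-k(s _0-2c\varepsilon)} $ for all large $ k $ and almost every $ \ttheta $, hence $ \underline{d} _{\mu _\ttheta}(x,W _\ttheta(x))\geqslant s _0-2c\varepsilon $; a Fubini argument (the exceptional $ \ttheta $-set being allowed to depend on $ x $) then yields, for almost every $ \ttheta $, that $ \underline{d} _{\mu _\ttheta}(x,W _\ttheta(x))\geqslant s _0-2c\varepsilon $ for $ \nu $-a.a.\ $ x\in\mathcal{C} _{\varepsilon,N} $. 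Letting $ N\to\infty $ (so that $ \bigcup _N\mathcal{C} _{\varepsilon,N} $ is $ \nu $-conull by Lemma \ref{lem:C_full}) and then $ \varepsilon\searrow 0 $ along $ 1/m $ finishes the proof.

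To prove the expectation estimate, fix $ x\in\mathcal{C} _{\varepsilon,N} $ and let $ r\leqslant\delexp $ be small. By Tonelli's theorem,
\[
	\int\mu _\ttheta\big(B _r(x,W _\ttheta(x))\big)\,d\ttheta\leqslant\int _{\{v\in\J\,:\,d(x,v)\leqslant r\}}\mathbb{P}\big[\,|W _\ttheta(x)-W _\ttheta(v)|\leqslant r\,\big]\,d\nu(v).
\]
Up to the $ \nu $-null point $ x $ (Lemma \ref{lem:eqisGibbs}), the ball $ \{v:d(x,v)\leqslant r\}\subseteq B _\delexp(x) $ is the disjoint union of its intersections with the Bowen shells $ B _{n,\delexp}(x)\setminus B _{n+1,\delexp}(x) $, $ n\geqslant 0 $. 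By the first defining property of $ \mathcal{C} _{\varepsilon,N} $, only levels $ n $ at least of order $ m _-(r):=\dfrac{-\log r}{\int\log|\tau'|\,d\nu} $ contribute, and the shell at level $ n $ is contained in a ball of radius $ e ^{n(-\int\log|\tau'|\,d\nu+\varepsilon)} $, so by the third property its $ \nu $-measure is at most $ e ^{n(-\int\log|\tau'|\,d\nu+\varepsilon)(\dim _H\nu-\varepsilon)} $. On that shell the Key Lemma \ref{lem:moss} gives $ \|h _{x,v}\| _\infty\leqslant C _h/\lambda ^n(x) $, hence
\[
	\mathbb{P}\big[\,|W _\ttheta(x)-W _\ttheta(v)|\leqslant r\,\big]\leqslant\min\Big\{1,\ \frac{2rC _h}{\lambda ^n(x)}\Big\},
\]
and the second property of $ \mathcal{C} _{\varepsilon,N} $ bounds $ \lambda ^n(x) $ below by $ e ^{n(\int\log\lambda\,d\nu-\varepsilon)} $.

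Let $ n _1(r) $ be the largest $ n $ for which the second property still forces $ \lambda ^n(x)\geqslant 2rC _h $, so that $ n _1(r) $ is of order $ \dfrac{-\log r}{-\int\log\lambda\,d\nu} $ and, by partial hyperbolicity, $ n _1(r)>m _-(r) $. For the shells with $ n>n _1(r) $ we bound the probability by $ 1 $; their total contribution telescopes to $ \nu\big(B _{n _1(r)+1,\delexp}(x)\big)\leqslant e ^{(n _1(r)+1)(-\int\log|\tau'|\,d\nu+\varepsilon)(\dim _H\nu-\varepsilon)} $, and inserting the order of $ n _1(r) $ together with the Gibbs identity $ \dim _H\nu\cdot\int\log|\tau'|\,d\nu=h _\tau(\nu) $ from Lemma \ref{lem:gibbs_props}, this is $ \leqslant r ^{\,s _0-c\varepsilon} $. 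For the shells with $ m _-(r)\leqslant n\leqslant n _1(r) $ we use the density bound; writing $ \gamma _\varepsilon:=(-\int\log|\tau'|\,d\nu+\varepsilon)(\dim _H\nu-\varepsilon)-\int\log\lambda\,d\nu+\varepsilon $, their total contribution is at most $ 2rC _h\sum _{m _-(r)\leqslant n\leqslant n _1(r)}e ^{\,n\gamma _\varepsilon} $. Since $ \gamma _\varepsilon\to -h _\tau(\nu)-\int\log\lambda\,d\nu\geqslant 0 $ as $ \varepsilon\searrow 0 $ by the hypothesis, this finite geometric sum is dominated by $ 2rC _h $ times either its top term $ e ^{\,n _1(r)\gamma _\varepsilon} $ (when $ \gamma _\varepsilon>0 $) or $ O(\log(1/r)) $ (when $ \gamma _\varepsilon\leqslant 0 $, in which case $ s _0=1 $); substituting the order of $ n _1(r) $ and letting $ \varepsilon\searrow 0 $, this is again $ \leqslant r ^{\,s _0-c\varepsilon} $. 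Adding the two contributions yields the claimed estimate.

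The only genuine difficulty is this last estimate. The horizontal ball $ \{v:d(x,v)\leqslant r\} $ is far larger than the cylinder on which $ W _\ttheta $ oscillates by an amount comparable to $ r $, so it meets a whole band of intermediate Bowen shells over which $ \graph W _\ttheta $ spreads vertically by more than $ r $; for a fixed $ v $ in such a shell the value $ W _\ttheta(v) $ could deterministically lie within $ r $ of $ W _\ttheta(x) $, and it is precisely the randomisation---through the uniform density bound $ \|h _{x,v}\| _\infty\leqslant C _h/\lambda ^n(x) $ of Lemma \ref{lem:moss}, so that the relevant event has probability at most $ 2rC _h/\lambda ^n(x) $---that makes these shells contribute no more than $ r ^{s _0} $ in expectation. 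Getting the two exponents to match as $ \varepsilon\searrow 0 $, using the three defining inequalities of $ \mathcal{C} _{\varepsilon,N} $ and the identity $ \dim _H\nu=h _\tau(\nu)/\int\log|\tau'|\,d\nu $, is the only real computation; note that the hypothesis $ -\int\log\lambda\,d\nu\geqslant h _\tau(\nu) $ enters exactly to keep the intermediate-shell sum subcritical, so that its exponent matches $ s _0 $ instead of overshooting---in the complementary regime $ -\int\log\lambda\,d\nu<h _\tau(\nu) $ one instead needs the $ s $-energy estimate of Lemma \ref{lem:lower1}.
\end{proof}
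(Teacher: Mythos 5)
Your argument is correct and reaches the stated bound, but it takes a genuinely different route from the paper. The paper bounds the \emph{expected truncated $s$-energy} of the restricted lift $\mu_{\ttheta,\varepsilon}$: it uses Jensen's inequality to replace the exponent $s$ by a power $s'<1$ so that the vertical integral can be controlled by $\|h_{x,v}\|_\infty^{s'}$ (a step that is forced because for $s\leqslant 1$ the energy kernel is not integrable in the fibre direction against a sup-norm bound alone), then sums over the Bowen shells $B_{n,\delexp}(x)\setminus B_{n+1,\delexp}(x)$ using the three defining properties of $\mathcal{C}_{\varepsilon,N}$, and converts finiteness of the energy into the pointwise-dimension bound via Lemma \ref{lem:potential_helper} together with a Borel density argument to pass from $\mu_{\ttheta,\varepsilon}$ back to $\mu_\ttheta$. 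You instead run a first-moment argument on ball measures: you bound $\int\mu_\ttheta\bigl(B_r(x,W_\ttheta(x))\bigr)\,d\ttheta$ directly, splitting the shells at the scale $n_1(r)$ where $\lambda^n(x)\sim r$ (density bound from the Key Lemma \ref{lem:moss} below it, trivial bound $1$ and a telescoped $\nu\bigl(B_{n_1(r)+1,\delexp}(x)\bigr)$ above it), and then pass to an almost-sure statement by Markov plus Borel--Cantelli along dyadic radii and a Fubini exchange of quantifiers. Both proofs rest on exactly the same core inputs (Lemma \ref{lem:moss}, the shell decomposition, $\mathcal{C}_{\varepsilon,N}$ with Lemma \ref{lem:C_full}, and $\dim_H\nu=h_\tau(\nu)/\int\log|\tau'|\,d\nu$ from Lemma \ref{lem:gibbs_props}), and your exponent arithmetic checks out: the tail term and the intermediate geometric sum both produce $r^{s_0-O(\varepsilon)}$, with the hypothesis $-\int\log\lambda\,d\nu\geqslant h_\tau(\nu)$ entering precisely as $\gamma_\varepsilon\geqslant 0$. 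What your route buys is the avoidance of the energy kernel and hence of the Jensen manipulation, and you work with $\mu_\ttheta$ itself, so no Borel density step is needed; the price is the Fubini/measurability bookkeeping (joint measurability in $(\ttheta,x)$ of the events $\mu_\ttheta(B_{2^{-k}})>2^{-k(s_0-2c\varepsilon)}$, which is routine but should be acknowledged) and the explicit dyadic Borel--Cantelli. Three harmless remarks: the Key Lemma should be read with $v\in B_{n,\delexp}(x)\setminus B_{n+1,\delexp}(x)$ (the ``$\not\in$'' in its statement is a typo, and you use it exactly as the paper's own proofs do); your multiplicative constant in the geometric sum necessarily depends on $\varepsilon$ through $1/(1-e^{-\gamma_\varepsilon})$, which is fine since $\varepsilon$ is fixed before $r\to 0$ but is slightly at odds with your phrasing that only $c$ depends on nothing; and in fact $\gamma_\varepsilon>\gamma_0\geqslant 0$ for small $\varepsilon$, so your case ``$\gamma_\varepsilon\leqslant 0$'' never occurs --- your treatment of it is superfluous but not wrong.
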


\begin{proof}

For $ s < s' < 1 $, by Jensen's inequality and Lemma \ref{lem:moss}, we have
\begin{eqnarray*}
	&&	\int\frac{d\ttheta   }{(d(x,v) ^2 + ( W _\ttheta (x) - W _\ttheta ( v) ) ^2 ) ^{s/2} }		\\
	&\leqslant &	\left(\int\frac{d\ttheta   }{(d(x,v) ^2 + ( W _\ttheta (x) - W _\ttheta ( v) ) ^2 ) ^{s/(2s')} }\right)^{s'}		\\
		& = &	\left(\int _\R\frac{h _{x, v} (z)\, d z   }{(d(x,v) ^2 + z ^2 ) ^{s/(2s')} }\right)^{s'}\\
	&\leqslant &	\left(\int _\R\frac{d z   }{(d(x,v) ^2 + z ^2 ) ^{s/(2 s')} }\right) ^{s'}\| h _{x, v}\| _\infty ^{s'}\\
	& = &	d ( x , v ) ^{s' - s}\left(\int _\R\frac{d t   }{(1 + t ^2 ) ^{s/(2 s')} }\right) ^{s'}\| h _{x, v}\| _\infty ^{s'}\\
	& = &	K _{s' / s}\, d ( x , v ) ^{s' - s}\| h _{x, v}\| _\infty ^{s'}\quad\leqslant 	\frac{C _h  ^{s'}\, K _{s' / s} }{(\lambda ^n (x) ) ^{s'}} , 
\end{eqnarray*}
where $ K _t $ is the constant in proof of Lemma \ref{lem:lower1} and well-defined for $ t = s'/s > 1 $.
Note that we also used $d(x,v)\leqslant 1$.
Let $\varepsilon > 0 $ be small enough and let $ x\in\mathcal{C} _\varepsilon $.
By Fubini's lemma we have
\begin{eqnarray*}
	&& \iint _{B _\delexp (x)}\frac{d\nu _\varepsilon (v) }{(d(x,v) ^2 + ( W _\ttheta (x) - W _\ttheta ( v) ) ^2 ) ^{s/2} }\, d\ttheta \\
	&=&	\sum _{n =0} ^\infty\iint _{B _{n,\delexp} (x)\setminus B _{n+1,\delexp} (x)}\frac{d\nu _\varepsilon (v) }{(d(x,v) ^2 + ( W _\ttheta (x) - W _\ttheta ( v) ) ^2 ) ^{s/2} }\, d\ttheta\\
	&\leqslant &	C _h  ^{s'}\, K _{s' / s}\,\sum _{n=0} ^\infty\frac{\nu ( B _{n,\delexp} (x)  )}{(\lambda ^n (x) ) ^{s'}}\\
		&\leqslant &	C _h  ^{s'}\, K _{s' / s}\,\sum _{n=0} ^\infty\frac{\nu ( B _{e ^{n(-\int\log |\tau '| d\nu +\varepsilon)}} (x)  )}{(\lambda ^n (x) ) ^{s'}}\\ 
	&\leqslant &	C _h  ^{s'}\, K _{s' / s}\,\sum _{n=0} ^\infty e ^{n(-\int\log |\tau '| d\nu +\varepsilon) (\dim _H\nu -\varepsilon)} e ^{- s' n (\int\log\lambda\, d\nu -\varepsilon )} <\infty
\end{eqnarray*}
for all $ s\in ( 0 , 1 ) $ and $ s'\in\left(\max\{ s,\tilde{S} (\varepsilon )\} , 1\right) $, where
\[
	\tilde{S} (\varepsilon ) :=\frac{(\int\log |\tau '|\, d\nu -\varepsilon) (\dim _H\nu -\varepsilon)}{-\int\log\lambda\, d\nu +\varepsilon }  .
\]
Observe that $\tilde{S} (\varepsilon ) < 1 $ for $\varepsilon > 0 $ so that the following approximation works, even in case $\tilde{S} ( 0 ) = 1 $.
Since $\lim _{\varepsilon\to 0}\tilde{S} (\varepsilon ) =\frac{h _\tau (\nu )}{\int\log |\tau '| d\nu}<1$, the claim follows by lemmas \ref{lem:C_full} and \ref{lem:potential_helper}.
\end{proof}

\begin{proof}[Conclusion of proof of Lemma \ref{lem:lift}]
For almost all $\ttheta$, it is a combination of Lemmas \ref{lem:lower1} and \ref{lem:lower2} that
\[
	\underline{d} _{\mu _\ttheta}\geqslant\min\left\{\dim _H\nu + 1 +\frac{\int\log\lambda\, d\nu}{\int\log |\tau '|\, d\nu} ,\frac{h _\tau (\nu )}{\int\log |\tau '|\, d\nu}\right\} 
\]
holds $\mu _\ttheta $-almost surely.
Hence, together with the upper bound obtained in Lemma \ref{lem:upper_bound}, the proof of Lemma \ref{lem:lift} is finished.
\end{proof}

\section{Proof of Lemma \ref{lem:Bedford_weak}} \label{sec:proof_Bed_weak}

We mainly follow the major arguments in \cite{Bedford89} except for the part of Lemma \ref{lem:Bed_estlem_3}, the novel lower bound for the oscillation of randomised graph.
All things become a bit more tidy than in the mentioned work because of the additional variable $\ttheta $.
Indeed, we consider the following dynamical system.
For each $ (\vartheta , i )\in\T\times\Sigma _\ell$, the contraction $ F _{\vartheta, i } :\T\times\R\to\T\times\R$ is defined by
\[
	F _{\vartheta, i } ( x, y) =\left(\rho _i (x) ,\lambda (\rho _i (x) )\, y + g (\rho _i (x) +\vartheta  )\right) .
\]
Observe that
\begin{equation}
	F _{\vartheta _0 ,\kappa (x) } (\tau x , W _{\sigma\ttheta} (\tau x ) ) =\left( x , W _\ttheta ( x )\right) 	\label{eq:F_invariant}
\end{equation}
for all $ x\in\T$ and $\ttheta\in \T ^{\N _0}$.
In addition, the derivative matrix for $F _{\vartheta ,i}$ can be calculated as
\begin{eqnarray*}
	D F _{\vartheta , i} ( x , y )	& = &	\left[
	\begin{matrix}
		\rho _i ' (x)	&	0\\
		( y\cdot\lambda ' + g' (\cdot +\vartheta) ) (\rho _i (x) )\cdot\rho _i ' ( x )	&	\lambda (\rho _i (x) )
	\end{matrix}	\right]\\
	& =: &	\left[
	\begin{matrix}
		a _i (x)	&	0\\
		b _i (\vartheta , x , y )	&	c _i ( x )
	\end{matrix}	\right] 
\end{eqnarray*}
for $ (x,y)\in\T\times\R$.
It is convenient for iterated maps to be denoted by
\[
	F _{\ttheta ,\mathbf{i}} :=  F _{\vartheta _{n-1} , i _{n-1}}\circ\cdots\circ  F _{\vartheta _1 , i _1}\circ F _{\vartheta _0 , i _0}
\]
for $ (\ttheta ,\mathbf{i} )\in\T ^n\times\Sigma _\ell ^n $.
Observe that, applying \eqref{eq:F_invariant} iteratively, we have
\begin{equation}
	F _{[\ttheta] _n , [x] _n}\left(\tau ^n x , W _{\sigma ^n\ttheta} (\tau ^n x)\right) =\left( x , W _\ttheta (x)\right)  \label{eq:F_invariant_n}
\end{equation}
for all $ x\in\J $, $\ttheta\in \T ^{\N _0} $ and $ n\in\N $, where $ [\ttheta ] _n := (\vartheta _0 ,\vartheta _1 ,\ldots ,\vartheta _{n-1} ) $.

In the following, a continuous map $ C : I\to [0,1]\times\R $ for any subinterval $I\subseteq\T$ will be called a curve if $C_1$ is strictly monotone, where $C (t) = ( C _1(t) , C _2(t) )$.
Given a curve $ C : I\to\T\times\R $, its width, height and the hight over $\J$ are, respectively, denoted by
\[
	| C | _W := | C_1(I) |, \quad | C | _H :=\sup _{t_1, t_2\in I} | C _2 (t_1) - C _2 (t_2) |  \quad\mbox{and}\quad  | C | _{\J, H} :=\sup _{t_1, t_2\in I\cap C_1^{-1}(\J )} | C _2 (t_1) - C _2 (t_2) | .
\]

\begin{lemma}	\label{lem:Bed_estlem_1}
Let $ M > 0 $ and $ I\subseteq\T $ be a subinterval.
For any curve $ C : I\to\T\times [-M, M]$ with $C_1$, we have
\[
	\inf _{t\in I} c _i (C _1 (t))\cdot | C | _{\J, H} - b\cdot | C | _W\leqslant | F _{\vartheta , i}\circ C | _{\J, H}
\quad\mbox{and}\quad
	| F _{\vartheta , i}\circ C | _H\leqslant\sup _{t\in I} c _i (C _1 (t))\cdot | C | _{\J, H} + b\cdot | C | _W,
\]
where $ b :=\max\left\{ b _i (\vartheta , x , y ) :\begin{array}{l}
	 i\in\Sigma _\ell ,\;\vartheta\in\T ,\\  (x,y)\in\T\times [-M, M]
\end{array}	\right\} $.
\end{lemma}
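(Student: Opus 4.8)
The plan is to reduce both estimates to one exact identity comparing the second coordinate of $F_{\vartheta,i}\circ C$ at two parameters. Write $u(t):=\rho_i(C_1(t))$, so that $(F_{\vartheta,i}\circ C)(t)=\bigl(u(t),\ \lambda(u(t))\,C_2(t)+g(u(t)+\vartheta)\bigr)$. Subtracting the values of the second coordinate at $t_1$ and $t_2$, adding and subtracting $\lambda(u(t_1))\,C_2(t_2)$, and recognising the increment of $x\mapsto\lambda(\rho_i(x))\,C_2(t_2)+g(\rho_i(x)+\vartheta)$ over $[C_1(t_2),C_1(t_1)]$ as an integral whose integrand the chain rule identifies with $b_i(\vartheta,x,C_2(t_2))$, one obtains
\[
	(F_{\vartheta,i}\circ C)_2(t_1)-(F_{\vartheta,i}\circ C)_2(t_2)=c_i(C_1(t_1))\bigl(C_2(t_1)-C_2(t_2)\bigr)+\int_{C_1(t_2)}^{C_1(t_1)}b_i(\vartheta,x,C_2(t_2))\,dx
\]
for all $t_1,t_2\in I$. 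Since $|C_2(t_2)|\le M$, the definition of $b$ bounds the integrand by $b$ in absolute value, and since $C_1$ is monotone on $I$ the integral term has modulus at most $b\,|C_1(t_1)-C_1(t_2)|\le b\,|C|_W$. Hence the modulus of the left-hand side lies within $b\,|C|_W$ of $c_i(C_1(t_1))\,|C_2(t_1)-C_2(t_2)|$; this two-sided bound is the engine for both claims.

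For the lower estimate I would take $t_1,t_2\in I$ with $C_1(t_1),C_1(t_2)\in\J$. Discarding the at most one parameter with $C_1(t_j)\in\{0,1\}$ (harmless by continuity of $C_2$, since $\J\setminus\{0,1\}$ is dense in $\J$ in the nondegenerate case), we may assume $C_1(t_j)\in\J\cap(0,1)$; then $u(t_j)=\rho_i(C_1(t_j))\in\J$ because $\tau(\J)\subseteq\J$ and $\tau_{|I_i^\circ}$ is a homeomorphism onto $(0,1)$, so $\rho_i$ carries $\J\cap(0,1)$ into $\J$. Therefore both image points lie over $\J$, the left-hand side of the two-sided bound is $\le|F_{\vartheta,i}\circ C|_{\J,H}$, and using $c_i(C_1(t_1))\ge\inf_t c_i(C_1(t))$ and passing to the supremum of $|C_2(t_1)-C_2(t_2)|$ over such pairs (which equals $|C|_{\J,H}$) yields $\inf_t c_i(C_1(t))\,|C|_{\J,H}-b\,|C|_W\le|F_{\vartheta,i}\circ C|_{\J,H}$.

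For the upper estimate, keeping the same $\J$-admissible pairs and now bounding $c_i(C_1(t_1))\le\sup_t c_i(C_1(t))$ and $|C_2(t_1)-C_2(t_2)|\le|C|_{\J,H}$, the two-sided bound gives $|F_{\vartheta,i}\circ C|_{\J,H}\le\sup_t c_i(C_1(t))\,|C|_{\J,H}+b\,|C|_W$. To finish I would observe that the first coordinate $\rho_i\circ C_1$ of $F_{\vartheta,i}\circ C$ meets $\J$ exactly as densely as $C_1$ does (by the identification $\rho_i^{-1}(\J)\cap(0,1)=\J\cap(0,1)$ used above), so that under the density hypothesis on $C$ one has $|F_{\vartheta,i}\circ C|_H=|F_{\vartheta,i}\circ C|_{\J,H}$ by continuity of $(F_{\vartheta,i}\circ C)_2$; this upgrades the previous display to the asserted $|F_{\vartheta,i}\circ C|_H\le\sup_t c_i(C_1(t))\,|C|_{\J,H}+b\,|C|_W$.

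The delicate point is the integral identity itself: $x\mapsto\lambda(\rho_i(x))\,C_2(t_2)+g(\rho_i(x)+\vartheta)$ is in general only piecewise $C^1$ on $\overline{I_i}$, since $\lambda'$ and $g'$ may jump at the partition endpoints and $\rho_i'$ is a priori defined only on $I_i^\circ$. One checks that this map is continuous on $\overline{I_i}$ and Lipschitz on each maximal smooth piece — using that $\rho_i'=1/(\tau'\circ\rho_i)$ extends continuously and nonvanishingly to $\overline{I_i}$ by the piecewise $C^{1+\alpha}$ regularity of $\tau$ together with the partial hyperbolicity \eqref{eq:partial_hyperbolic} — hence it is absolutely continuous, so the fundamental theorem of calculus applies with the a.e.\ derivative $b_i(\vartheta,\cdot,C_2(t_2))$, which is still bounded by $b$. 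The remaining checks ($\rho_i^{-1}(\J)=\J$ off $\{0,1\}$, and that $C_1(I)$ meets $\J$ densely for the curves the lemma is applied to) are routine.
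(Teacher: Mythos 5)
Your two-sided identity
\[
(F_{\vartheta,i}\circ C)_2(t_1)-(F_{\vartheta,i}\circ C)_2(t_2)=c_i(C_1(t_1))\bigl(C_2(t_1)-C_2(t_2)\bigr)+\int_{C_1(t_2)}^{C_1(t_1)}b_i(\vartheta,x,C_2(t_2))\,dx
\]
is correct, and it is in substance the same decomposition the paper uses (vertical increment equals contraction of the old increment plus a drift of size at most $b\,|C|_W$); the paper obtains it by reparametrising so that $C_1(t)=t$, smoothing $C_2$ by Stone--Weierstrass and differentiating the composition, whereas your exact identity plus the fundamental theorem of calculus for the Lipschitz map $x\mapsto\lambda(\rho_i(x))C_2(t_2)+g(\rho_i(x)+\vartheta)$ avoids both the smoothing step and the paper's appeal to a mean value theorem for $\int c_i\,C_2'$ with a sign-changing integrand. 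Your lower bound is complete: restricting to pairs with $C_1(t_j)\in\J$ and using $\rho_i(\J)\subseteq\J$ (which in fact also holds at $0$ and $1$ by a closure argument, so discarding endpoints is unnecessary) gives $\inf_t c_i(C_1(t))\,|C|_{\J,H}-b\,|C|_W\le|F_{\vartheta,i}\circ C|_{\J,H}$.

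The gap is in the second inequality. What you actually prove is $|F_{\vartheta,i}\circ C|_{\J,H}\le\sup_t c_i(C_1(t))\,|C|_{\J,H}+b\,|C|_W$; the upgrade of the left-hand side to the full height $|F_{\vartheta,i}\circ C|_H$ rests on your ``density hypothesis on $C$'', namely that $C_1^{-1}(\J)$ is dense in $I$. No such hypothesis appears in the lemma, and it fails exactly where the lemma is applied: in Lemmas \ref{lem:Bed_estlem_2} and \ref{lem:Bedford_weak} the curves are the graphs $t\mapsto(t,W_{\sigma^n\ttheta}(t))$ and their $F$-images, for which $C_1^{-1}(\J)=\J\cap I$ is nowhere dense whenever $\J$ is a Cantor set. (Indeed the literal statement, with $|C|_{\J,H}$ on the right, is false for a general curve --- take $C_1(I)\cap\J=\emptyset$ and $C_2$ oscillating --- and the paper's own proof quietly assumes, at the step bounding $c_i(t_3)\int_{t_1}^{t_2}C_2'$ by $\sup_t c_i\,|C_2|_{\J,H}$, that the height-extremising pair is $\J$-admissible, which is the same unproved point.) The constructive remark is that your identity, applied to arbitrary $t_1,t_2\in I$ with the bound $|C_2(t_1)-C_2(t_2)|\le|C|_H$, immediately yields $|F_{\vartheta,i}\circ C|_H\le\sup_t c_i(C_1(t))\,|C|_H+b\,|C|_W$, and this is the inequality the downstream arguments actually need (it iterates in Lemma \ref{lem:Bed_estlem_2} with $|C|_H\le 2M$ and only changes the constant $L'''$). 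So you should either prove that version, or make explicit the extra hypothesis under which $|C|_H=|C|_{\J,H}$; as written, the second claim is not established.
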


\begin{proof}
Due to the monotonicity assumption on $C_1$, we can reparametrise the curve so that $C_1(t)=t$ by implicit function theorem.
Observe that after this step, it hold $|C|_W=|I|$ and $|C|_{\J ,H}=\sup _{t_1,t_2\in I\cap\J}|C_2 (t_1) - C_2(t_2)|$.
Furthermore, by Stone-Weierstrass approximation theorem, we may assume $C_2\in C^1(\R )$.
Now, we can write
\[
	(F _{\vartheta , i}\circ C) ' (t) = D F _{\vartheta , i} ( C (t) )\cdot\left[\begin{matrix}
	C _1 ' (t)\\ C _2 ' (t)
	\end{matrix}\right]
	 =\left[\begin{matrix}
	 t\\ b _i (\vartheta , C (t) ) + c _i ( t)\, C _2 ' (t)
	\end{matrix}\right] 
\]
for all $ t\in I $, so we have
\begin{eqnarray*}
	| F _{\vartheta , i}\circ C | _{\J, H}	&\geqslant &\int _{t_1} ^{t_2}  b _i (\vartheta , C (t) ) + c _i (t)\, C _2 ' (t)\, d t\\
	&\geqslant &	\inf _{t\in [t_1, t_2]} c _i (t)\int _{t _1} ^{t _2}  C _2 ' (t)\, d t - b\cdot (t_2 - t_1) 	\\
	&\geqslant &	\inf _{t\in I} c _i (t)\cdot\left( C _2  (t _1) - C _2 ( t _2 )\right)  - b\, | C | _W
\end{eqnarray*}
for all $ t _1, t _2\in\J\cap I $.
The first inequality in the claim follows from this, immediately.
Moreover, there exist some $t _1, t _2\in I$ such that
\begin{eqnarray*}
	| F _{\vartheta , i}\circ C | _H &=& F _{\vartheta , i}\circ C(t_2) - F _{\vartheta , i}\circ C (t_1) \\
	& = &\int _{t_1} ^{t_2}  b _i (\vartheta , C (t)) + c _i (t)\, C _2 ' (t)\, d t \quad\leqslant\quad	b\, | C | _W +\int _{t_1} ^{t_2} c _i (t)\, C _2 ' (t)\, d t .
\end{eqnarray*}
Finally, since $c_i>0$, by the mean value theorem, there is some $ t _3\in ( t_1, t_2 ) $ such that
\[
	\int _{t_1} ^{t_2} c _i (t)\, C _2 ' (t)\, d t = c _i (t _3)\int _{t_1} ^{t_2} C _2 ' (t)\, d t\leqslant	\sup _{t\in I} c_i (t)\, | C _2 | _{\J, H}.
\]
\end{proof}

\begin{lemma}\label{lem:Bed_estlem_2}
Given $ M > 0 $, there are constants $ L ' , L '', L '''> 0 $ such that
\[
	\left( L '\, | C | _{\J, H} - L ''\, | C | _W\right)\,\lambda ^n (\rho _{\mathbf{i}} (x) )\leqslant | F _{\ttheta ,\mathbf{i}} ^n\circ C | _{\J, H}\leqslant L '''\lambda ^n (\rho _{\mathbf{i}} (x) )  
\]
for any curve $ C : I\to\T\times [-M, M] $, $  (\ttheta ,\mathbf{i} )\in\T ^n\times\Sigma _\ell ^n  $, $ x\in\T $ and $ n\in\N $.
\end{lemma}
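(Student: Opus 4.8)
The plan is to iterate Lemma~\ref{lem:Bed_estlem_1} along the composition $F_{\ttheta,\mathbf i}=F_{\vartheta_{n-1},i_{n-1}}\circ\cdots\circ F_{\vartheta_0,i_0}$, tracking how the width and the height over $\J$ of a curve transform under each factor. First I would note that, since $\lambda_{\max}:=\sup\lambda<1$ and $g$ is bounded, the box $\T\times[-M,M]$ becomes forward invariant under every $F_{\vartheta,i}$ once $M$ is enlarged to $\max\{M,\|g\|_\infty/(1-\lambda_{\max})\}$; this is harmless, as the asserted constants may depend on $M$. Setting $C^{(0)}:=C$ and $C^{(k)}:=F_{\vartheta_{k-1},i_{k-1}}\circ C^{(k-1)}$, each $C^{(k)}$ is again a curve into $\T\times[-M,M]$ with $C^{(k)}_1=\rho_{(i_0,\dots,i_{k-1})}\circ C_1$, so its first-coordinate image $C^{(k)}_1(I)$ lies in the level-$k$ cylinder $J_k:=\rho_{(i_0,\dots,i_{k-1})}(\T)$. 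Writing $h_k:=|C^{(k)}|_H$, $\underline h_k:=|C^{(k)}|_{\J,H}$, $w_k:=|C^{(k)}|_W$, $\gamma_k:=\sup_{t\in I}c_{i_{k-1}}(C^{(k-1)}_1(t))$ and $\underline\gamma_k:=\inf_{t\in I}c_{i_{k-1}}(C^{(k-1)}_1(t))$, Lemma~\ref{lem:Bed_estlem_1} gives $h_k\leqslant\gamma_k h_{k-1}+b\,w_{k-1}$ and $\underline h_k\geqslant\underline\gamma_k\underline h_{k-1}-b\,w_{k-1}$, while $w_k\leqslant a\,w_{k-1}$ with $a:=\sup_i\sup_{I_i}|1/\tau'|<1$ (note $a<1$ by \eqref{eq:partial_hyperbolic}). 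Telescoping with $\Gamma_k:=\prod_{m\leqslant k}\gamma_m$ and $\underline\Gamma_k:=\prod_{m\leqslant k}\underline\gamma_m$ yields
\[
	h_n\leqslant\Gamma_n\Bigl(|C|_H+b\textstyle\sum_{j=1}^{n}w_{j-1}/\Gamma_j\Bigr)\qquad\text{and}\qquad\underline h_n\geqslant\underline\Gamma_n\Bigl(|C|_{\J,H}-b\textstyle\sum_{j=1}^{n}w_{j-1}/\underline\Gamma_j\Bigr).
\]

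Next I would show that $\Gamma_n$ and $\underline\Gamma_n$ are comparable to $\lambda^n(\rho_{\mathbf i}(x))$ with a universal constant. Since $c_i(z)=\lambda(\rho_i(z))$ and $\rho_{i_{k-1}}(C^{(k-1)}_1(I))=C^{(k)}_1(I)\subseteq J_k$, both $\gamma_k$ and $\underline\gamma_k$ lie in $[\inf_{J_k}\lambda,\sup_{J_k}\lambda]$; on the other hand $\tau^{\,j}\rho_{\mathbf i}(x)=\rho_{(i_0,\dots,i_{n-1-j})}(x)\in J_{n-j}$, so $\lambda^n(\rho_{\mathbf i}(x))=\prod_{k=1}^{n}\lambda(\rho_{(i_0,\dots,i_{k-1})}(x))$ whose $k$-th factor again lies in $J_k$. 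As $J_k$ is contained in a single monotonicity interval with $\mathrm{diam}\,J_k\leqslant Da^k$ (Lemma~\ref{lem:tau_distortion}) and $\log\lambda$ is piecewise $\alpha$-Hölder (same reasoning as for $\log|\tau'|$ in the proof of Lemma~\ref{lem:tau_distortion}), one gets $\log(\sup_{J_k}\lambda/\inf_{J_k}\lambda)\leqslant C_\lambda D^\alpha a^{k\alpha}$ for a constant $C_\lambda$; hence both $\Gamma_n/\lambda^n(\rho_{\mathbf i}(x))$ and $\underline\Gamma_n/\lambda^n(\rho_{\mathbf i}(x))$ stay in $[c_0,c_1]$ with $c_0:=\exp(-C_\lambda D^\alpha a^\alpha/(1-a^\alpha))$ and $c_1:=c_0^{-1}$. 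The same distortion estimate shows $\lambda^n(\rho_{\mathbf i}(x))$ is, up to this same constant, independent of $x\in\T$, so the choice of $x$ in the statement is immaterial.

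The remaining point is to control the error sums, and this is where I expect the only genuine work. The key is that the \emph{strict} inequality $\Delta:=\inf_i\inf_{I_i}|\tau'|\lambda>1$ in \eqref{eq:partial_hyperbolic} forces the widths to decay geometrically faster than the $\lambda$-products. Indeed $w_{j-1}=|\rho_{(i_0,\dots,i_{j-2})}(C_1(I))|\leqslant D\,|J_{j-1}|\,|C|_W$ by bounded distortion, and since $|\tau'(y)|^{-1}\leqslant\Delta^{-1}\lambda(y)$ for every $y$ in a monotonicity interval, iterating along $J_{j-1}\to J_{j-2}\to\cdots\to J_1$ gives $|J_{j-1}|\leqslant D\,\Delta^{-(j-1)}\prod_{l=1}^{j-1}\sup_{J_l}\lambda$. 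On the other hand $\Gamma_j\geqslant c_0\lambda^j(\rho_{\mathbf i}(x))\geqslant c_0\lambda_{\min}\prod_{l=1}^{j-1}\inf_{J_l}\lambda$ with $\lambda_{\min}:=\inf\lambda>0$, so with $\prod_{l\geqslant1}(\sup_{J_l}\lambda/\inf_{J_l}\lambda)\leqslant c_0^{-1}$ one obtains $w_{j-1}/\Gamma_j\leqslant(D^2/(c_0^2\lambda_{\min}))\,\Delta^{-(j-1)}|C|_W$, and likewise with $\underline\Gamma_j$; summing the geometric series gives $\sum_{j\geqslant1}w_{j-1}/\Gamma_j\leqslant K_0|C|_W$ and $\sum_{j\geqslant1}w_{j-1}/\underline\Gamma_j\leqslant K_0|C|_W$ with $K_0:=D^2/(c_0^2\lambda_{\min}(1-\Delta^{-1}))$, a constant independent of $n$, $\ttheta$, $\mathbf i$ and $x$.

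Finally I would assemble the pieces. For the upper bound, $|F_{\ttheta,\mathbf i}\circ C|_{\J,H}\leqslant h_n\leqslant\Gamma_n(|C|_H+bK_0|C|_W)\leqslant c_1\lambda^n(\rho_{\mathbf i}(x))(2M+bK_0)$, using $|C|_H\leqslant 2M$ and $|C|_W\leqslant1$; thus $L''':=c_1(2M+bK_0)$. For the lower bound, $|F_{\ttheta,\mathbf i}\circ C|_{\J,H}=\underline h_n\geqslant\underline\Gamma_n(|C|_{\J,H}-bK_0|C|_W)\geqslant c_0\lambda^n(\rho_{\mathbf i}(x))(|C|_{\J,H}-bK_0|C|_W)$, which also holds trivially when the bracket is negative since $\underline h_n\geqslant0$; thus $L':=c_0$ and $L'':=c_0bK_0$. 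To reiterate, the main obstacle is the simultaneous use of bounded distortion of $\log\lambda$ and $\log|\tau'|$ over cylinders (to make all the correction factors summable) and of the strictness in \eqref{eq:partial_hyperbolic} (so that the ratio of the cylinder width $|J_{j-1}|$ to the product $\lambda^{j-1}$ decays geometrically rather than merely stays bounded); without strictness the error series need not converge.
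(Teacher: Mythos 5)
Your argument is correct and follows essentially the same route as the paper: iterate Lemma \ref{lem:Bed_estlem_1}, compare the resulting products of the $c_i$'s with $\lambda^n(\rho_{\mathbf i}(x))$ via bounded distortion of $\log\lambda$ along cylinders, and use the strictness in \eqref{eq:partial_hyperbolic} to make the width-error series geometrically summable (the paper organises the bookkeeping through tail products $\prod_{k\geqslant p}\hat c_k$, which it matches directly with $\lambda^{n-p}(\rho_{\mathbf i}(x))$, whereas you telescope with head products, but the estimates are the same). One small remark: for $j<n$ your intermediate inequality $\Gamma_j\geqslant c_0\,\lambda^j(\rho_{\mathbf i}(x))$ is not literally correct, since $\lambda^j(\rho_{\mathbf i}(x))$ is a product of values of $\lambda$ over the deepest cylinders $J_{n-j+1},\dots,J_n$ rather than over $J_1,\dots,J_j$; however, the bound you actually use, $\Gamma_j\geqslant\lambda_{\min}\prod_{l=1}^{j-1}\inf_{J_l}\lambda$, follows directly from $\gamma_m\geqslant\inf_{J_m}\lambda$, so the proof is unaffected.
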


\begin{proof}
Let $C, (\ttheta ,\mathbf{i} ), x$ and $n$ be given as stated.
By applying Lemma \ref{lem:Bed_estlem_1}, inductively, we obtain
\[
	| F _{\ttheta ,\mathbf{i}}\circ C | _{\J, H}\geqslant\hat{c} _0\cdots\hat{c} _{n-1}\, | C | _{\J, H} - b\,\sum _{j=1} ^n |\rho _{ [\mathbf{i} ]_{n-j}}\circ C_1 ( I ) | \prod _{k=n-j+1} ^{n-1}\hat{c} _k , 
\]
where $\hat{c} _k :=\inf _{t\in I} c _{i _k} (\rho _{[\mathbf{i}] _k}\circ C _1 (t) ) $ and $\prod _{k=n} ^{n-1}\hat{c} _k := 1 $.
Let $ ( x _k ) _{k=0} ^{n-1}\subset\T $ be such that
\[
	\hat{c} _k =\inf _{t\in I}\lambda\left(\rho _{[\mathbf{i}] _{k+1}}\circ C _1 (t)\right) =\lambda\left(\rho _{[\mathbf{i}] _{k+1}} ( x _k )\right) .
\]
For $p =0,\ldots, n-1$, since
\begin{eqnarray*}
	\left|\log\prod _{k=p} ^{n-1}\frac{\lambda\left(\rho _{[\mathbf{i}] _{k+1}} ( x _k )\right) }{\lambda\left(\rho _{[\mathbf{i}] _{k+1}} ( x  )\right)  }\right| 	&\leqslant &	\sum _{k=p} ^{n-1}\|  (\log\lambda ) '\| _\infty |\rho _{[\mathbf{i}] _{k+1}} ( x _k )  -\rho _{[\mathbf{i}] _{k+1}} ( x ) |\\
	&\leqslant &\frac{\|  (\log\lambda ) '\| _\infty  }{1 -\max _i\| (1 /\tau' )\circ \rho _i\| _\infty}	\quad =:\log K _0 ,
\end{eqnarray*}
we have
\begin{equation*}
	\prod _{k=p} ^{n-1}\hat{c} _k =\lambda ^{n-p} (\rho _{\mathbf{i}} (x) )\prod _{k=p} ^{n-1}\frac{\lambda\left(\rho _{[\mathbf{i}] _{k+1}} ( x _k )\right) }{\lambda\left(\rho _{[\mathbf{i}] _{k+1}} ( x  )\right)  }\quad\in [ K _0 ^{-1} , K _0 ]\cdot\lambda ^{n-p} (\rho _{\mathbf{i}} (x) ) .
\end{equation*}
Similarly, $ |\rho _{[\mathbf{i}] _p}\circ C_1( I ) |=|\rho _{[\mathbf{i}] _p} ' (u _p )|\, |C| _W=|1/\tau '|^p\left(\rho _{[\mathbf{i}] _p} (u _{n-j} )\right)\cdot |C| _W$ for some $ u _p\in [0,1] $, and there is a constant $ K _1 > 0 $ such that
\[
	|1/\tau '|^p\left(\rho _{[\mathbf{i}] _p} (u _{n-j} )\right) = |1/\tau '|^p\left(\rho _{[\mathbf{i}] _p} ( x )\right)\prod _{k=0} ^{p-1}\frac{|1/\tau'|\left(\rho _{[\mathbf{i}] _k} ( u _k )\right) }{|1/\tau'|\left(\rho _{[\mathbf{i}] _k} ( x )\right) }\in [ K _1 ^{-1} , K _1 ]\cdot  |1/\tau'| ^p\left(\rho _{[\mathbf{i}] _p} ( x )\right)
\]
for $p=0 ,\ldots , n-1$.
Observe that since the derivative of $\log |\tau'| $ on $\bigcup _{i\in\Sigma _\ell}I_i$ may fail to exist, we need here to consider the $\alpha $-Hölder semi-norm $|\cdot | _\alpha$ in order to obtain the constant $\log K _1 := |\log |\tau' || _\alpha / ( 1 -\| 1 /\tau'\| _\infty ^\alpha )$.
Consequently, the lower bound follows as
\begin{eqnarray*}
	| F _{\ttheta ,\mathbf{i}}\circ C | _{\J, H}	 &\geqslant &	K _0^{-1}\lambda ^{n-1} (\rho _{\mathbf{i}} (x) )\, | C | _{\J, H} - b\, K _0\, K _1\, |C|_W\sum _{j=1} ^n    |1/\tau'| ^{n-j}\left(\rho _{[\mathbf{i}] _{n-j}} ( x )\right)\lambda ^{j-1} (\rho _{\mathbf{i}} (x) )\\
	& = &	\lambda ^n (\rho _{\mathbf{i}} (x) )\,\left(\frac{K _0^{-1}| C | _{\J, H}}{\lambda (\rho _{i _{n-1}} (x) )}  -  b\, K _0\, K _1\,\lambda (\rho _{[\mathbf{i}] _{n-j}} (x) )\, |C|_W\sum _{j=0} ^{n-1}    (1/|\tau'\lambda|) ^{n-j}\left(\rho _{[\mathbf{i}] _{n-j}} ( x )\right)\right)\\
	&\geqslant &	\lambda ^n (\rho _{\mathbf{i}} (x) )\,\left(K _0^{-1}\inf ( 1 /\lambda )\,  | C | _{\J, H} -\frac{b\, K _0\, K _1\,\|\lambda '\| _\infty }{1 -\max _i\| (1 /\tau '\lambda )\circ\rho _i\| _\infty }\, | C | _W\right) .
\end{eqnarray*}
Finally, the upper bound can be derived in a similar manner.
Indeed, as done above for $\hat{c}_k$, we can show
\[
	\prod _{k=p} ^{n-1}\bar{c} _k\in [ K _0 ^{-1} , K _0 ]\cdot\lambda ^{j-1} (\rho _{\mathbf{i}} (x) )  ,
\]
for $p=0,\ldots ,n-1$, where $\bar{c} _k :=\sup _{t\in I} c _{i _k} (\rho _{[\mathbf{i}] _k}\circ C _1 (t) ) $.
From this follows that
\begin{eqnarray*}
	| F _{\ttheta ,\mathbf{i}}\circ C | _{\J, H}	 &\leqslant &	\lambda ^{n-1} (\rho _{\mathbf{i}} (x) )\, | C | _{\J, H} + b\, K _0\, K _1\, |C|_W\sum _{j=1} ^n    |1/\tau'| ^{n-j}\left(\rho _{[\mathbf{i}] _{n-j}} ( x )\right)\lambda ^{j-1} (\rho _{\mathbf{i}} (x) )\\
	&\leqslant &	\lambda ^n (\rho _{\mathbf{i}} (x) )\,\left(\| 1 /\lambda\| _\infty  +\frac{b\, K _0\, K _1\,\|\lambda '\| _\infty }{1 -\max _i\| (1 /\tau '\lambda)\circ\rho _i\| _\infty }\right) .
\end{eqnarray*}
\end{proof}

\begin{lemma}	\label{lem:Bed_estlem_3}
Assume that $ W _\ttheta $ is non-degenerate for almost all $\ttheta $.
Then, for any $L>0$, we have
\[
	\lim _{n\to\infty}\frac{\log a _L (\sigma ^n\ttheta )}{n} =  0
\]
for a.a. $\ttheta$, where
$ a _L (\ttheta ) := \sup _{u, v\in\J} | W _\ttheta ( u ) - W _\ttheta (v) | - L\, | u - v |$.
\end{lemma}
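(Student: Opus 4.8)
The bound $\limsup_{n\to\infty}\tfrac1n\log a_L(\sigma^n\ttheta)\le 0$ is immediate: since $\lambda<1$ on the compact torus $\T$, one has $\|W_\ttheta\|_\infty\le\|g\|_\infty/(1-\max_\T\lambda)$ uniformly in $\ttheta$, so $0\le a_L(\sigma^n\ttheta)\le 2\|g\|_\infty/(1-\max_\T\lambda)$ for every $n$. If $g$ is constant, then $W_\ttheta$ does not depend on $\ttheta$ and $a_L(\sigma^n\ttheta)$ is a fixed number, so the statement is trivial; I therefore assume $g$ non-constant. Likewise, if $\J$ is a singleton then $W_\ttheta|_\J$ is constant, hence degenerate, contradicting the hypothesis; so $\J$ is infinite and, writing $x^\ast=\rho_{i^\ast}(x^\ast)$ for a fixed point of an inverse branch $\rho_{i^\ast}$ (which lies in $\J$ and, generically, in $(0,1)$), there is a point $\tilde w\in\J\setminus\{x^\ast\}$ whose full $\rho_{i^\ast}$-backward orbit stays in $\J$ (e.g.\ $\tilde w=\rho_{j^\ast}(x^\ast)$ for a second branch when $\ell\ge2$; when $\ell=1$ one has $\J=\T$ and any $\tilde w\ne x^\ast$ works).

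The real content is $\liminf_{n\to\infty}\tfrac1n\log a_L(\sigma^n\ttheta)\ge 0$ a.s. For each $n$ put $v_n:=\rho_{i^\ast}^{\,n}(\tilde w)\in\J\cap I_n(x^\ast)$; then $v_n\to x^\ast$ and, iterating the functional equation $W_\ttheta=g(\cdot+\vartheta_0)+\lambda\cdot(W_{\sigma\ttheta}\circ\tau)$ (cf.\ \eqref{eq:F_invariant}, \eqref{eq:F_invariant_n}), one obtains the exact identity
\[
	W_\ttheta(x^\ast)-W_\ttheta(v_n)=\lambda^n(x^\ast)\,Q_n(\ttheta),\qquad
	Q_n(\ttheta)=W_{\sigma^n\ttheta}(x^\ast)-\Pi_n\,W_{\sigma^n\ttheta}(\tilde w)+R_n(\ttheta),
\]
where $\Pi_n:=\lambda^n(v_n)/\lambda^n(x^\ast)$ converges to a finite positive limit and $R_n$ is uniformly bounded and measurable with respect to $(\vartheta_0,\dots,\vartheta_{n-1})$; both facts follow from the bounded-distortion estimates of Lemma \ref{lem:tau_distortion} together with $\lambda(x^\ast)|\tau'(x^\ast)|>1$. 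Since by \eqref{eq:partial_hyperbolic} one also has $|x^\ast-v_n|\asymp|\tau'(x^\ast)|^{-n}=o(\lambda^n(x^\ast))$, it follows that
\[
	a_L(\ttheta)\ \ge\ |W_\ttheta(x^\ast)-W_\ttheta(v_n)|-L|x^\ast-v_n|\ \ge\ \lambda^n(x^\ast)\bigl(|Q_n(\ttheta)|-\varepsilon_n\bigr),
\]
with $\varepsilon_n\downarrow 0$ independently of $\ttheta$. Replacing $\ttheta$ by $\sigma^m\ttheta$, the proof reduces to the following claim: for a.e.\ $\ttheta$ and every small $\eta>0$, for all large $m$ there is some $n\in[n_0,\eta m]$ with $|Q_n(\sigma^m\ttheta)|\ge\delta$. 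Indeed, for such $n$ one gets $a_L(\sigma^m\ttheta)\ge\tfrac\delta2\lambda(x^\ast)^{\eta m}$, so $\tfrac1m\log a_L(\sigma^m\ttheta)\ge\eta\log\lambda(x^\ast)+o(1)$, and letting $\eta\downarrow0$ along a null sequence gives $\liminf\ge0$.

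It remains to estimate the ``bad'' events $B_m:=\{\,|Q_n(\sigma^m\ttheta)|<\delta\text{ for all }n\in[n_0,\eta m]\,\}$, and here the independence of the coordinates $\vartheta_k$ is essential. Truncating the series defining $W_{\sigma^n\ttheta}(x^\ast)$, $W_{\sigma^n\ttheta}(\tilde w)$ and the sum defining $R_n$ — each remainder geometrically small, the last because $\lambda(x^\ast)|\tau'(x^\ast)|>1$ — one writes $Q_n=\widetilde Q_n+(\text{term of size}<\delta/2)$ with $\widetilde Q_n$ depending only on the block $(\vartheta_{n-K'},\dots,\vartheta_{n+K-1})$ for fixed $K,K'$. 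Choosing $n_0<n_1<\cdots$ inside $[n_0,\eta m]$ with consecutive gaps $\ge K+K'$ makes the $J_m\gtrsim\eta m$ variables $\widetilde Q_{n_j}(\sigma^m\ttheta)$ \emph{independent} (functions of disjoint blocks of coordinates), and $\mathrm{Leb}\bigl(|\widetilde Q_{n_j}(\sigma^m\ttheta)|<\delta\bigr)=\mathrm{Leb}\bigl(|\widetilde Q_{n_j}|<\delta\bigr)$ by shift-invariance. The decisive point — and the main obstacle — is that the law of $\widetilde Q_n$ converges, as $n\to\infty$, to that of a non-constant continuous function of i.i.d.\ uniform variables, hence to an \emph{atomless} law, so that $\mathrm{Leb}\bigl(|\widetilde Q_{n_j}|<\delta\bigr)\le q<1$ uniformly for $n_j\ge n_0$ once $\delta$ is small; verifying this requires choosing the comparison data $(i^\ast,\tilde w)$ so that the leading contribution $\vartheta\mapsto g(x^\ast+\vartheta)-\Pi\,g(\tilde w+\vartheta)$ (with $\Pi=\lim_n\Pi_n$) is non-constant — by expanding $g$ in Fourier series this excludes only $\tilde w$ in a discrete set, so any $\tilde w\in\J$ outside it works — and then transporting non-degeneracy through the truncation. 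Granting this, $\mathrm{Leb}(B_m)\le q^{J_m}\le q^{c\eta m}$ is summable, and Borel–Cantelli gives $\mathrm{Leb}(B_m\text{ i.o.})=0$, which is exactly the claim above; the residual distortion and block-counting estimates are routine given Lemma \ref{lem:tau_distortion} and \eqref{eq:partial_hyperbolic}.
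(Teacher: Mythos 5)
Your route is genuinely different from the paper's. The paper proves, for all $L$ above an explicit threshold $L_0$, the pointwise almost-supermultiplicativity $a_L(\ttheta)\geqslant(\inf\lambda\circ\rho_0)\,a_L(\sigma\ttheta)$ by pulling a pair $u,v\in\J$ back through one inverse branch via \eqref{eq:F_invariant}; hence $\log a_L-\log a_L\circ\sigma$ is bounded below, so by the cited lemma of Keller it is integrable with integral zero, and Birkhoff's theorem plus telescoping gives $\tfrac1n\log a_L(\sigma^n\ttheta)\to0$; small $L$ follow from $a_{L_0}\leqslant a_L\leqslant 2M+1$. That argument is short, needs only shift-invariance and ergodicity of the law of $\ttheta$, and works for every $g$, but it is purely qualitative and rests on the assertion $a_L(\ttheta)>0$ for non-degenerate $W_\ttheta$. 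Your argument instead extracts an explicit a.s.\ lower bound $a_L(\sigma^m\ttheta)\geqslant\tfrac{\delta}{2}\lambda(x^\ast)^{\eta m}$ for every $\eta>0$, using the independence of the phases, anti-concentration of $Q_n$, and Borel--Cantelli; in the regime where it applies (non-constant $g$, suitable $\tilde w$) it does not even need the non-degeneracy hypothesis but rather re-proves it, in the spirit of \cite{Moss12}.

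The one real gap is the step you yourself label ``granting this'', and the justification you offer for it is not right as stated: convergence of the law of $\widetilde Q_n$ to an \emph{atomless} law neither follows from being a non-constant continuous function of i.i.d.\ uniforms (such functions can have atoms) nor is it what you need. What you need is the uniform bound $\sup_{n\geqslant n_0}\mathrm{Leb}(|\widetilde Q_n|<\delta)\leqslant q<1$, and the clean way to get it is to condition on every coordinate except $\vartheta_{m+n}$: that coordinate enters $Q_n(\sigma^m\ttheta)$ and its truncation only through $\vartheta\mapsto g(x^\ast+\vartheta)-\Pi_n g(\tilde w+\vartheta)$, so once this function has oscillation bounded below uniformly in $n\geqslant n_0$ (your Fourier argument gives this for all but finitely many $\tilde w$, provided $g$ is non-constant and $\J$ is infinite), the conditional law of $\widetilde Q_n$ is a translate of a fixed non-constant continuous function of a uniform variable and the bound is uniform in $n$ and in the conditioning; with that patch the Borel--Cantelli step goes through. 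Two smaller slips: dismissing the case of a singleton $\J$ by ``$W_\ttheta|_\J$ constant, hence degenerate'' conflates Lipschitz continuity on $\T$ (the paper's definition of degenerate) with behaviour on $\J$ — note the paper itself silently uses ``non-degenerate $\Rightarrow a_L>0$'', which is the same point — and in the constant-$g$ case you also need $a_L>0$, i.e.\ that non-degeneracy forces $W|_\J$ to be non-Lipschitz, which should be proved or cited rather than assumed.
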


\begin{proof}
Let $L>0$ be arbitrary.
Observe that $a _L(\ttheta )>0$ for those parameters $\ttheta$ for which $ W _\ttheta $ is non-degenerate.
Let $ M :=\sup _{\ttheta , x\in\T ^{\N _0}\times\J} | W _\ttheta (x)|$, and recall $c_i:=\lambda\circ\rho _i$.
Given $u, v\in\T$ and $ i\in\Sigma _\ell$, the invariance relation of \eqref{eq:F_invariant} yields
\[
	W _\ttheta (\rho _i (u) ) = c _i (u)\, W _{\sigma\ttheta} ( u ) + g(\rho _i (u) +\vartheta _0) .
\]
On the other hand, there is some $w$ between $u,v$ such that $|\rho _i (u) -\rho _i (v)| = |\rho _i ' (w) |\, |u - v|$, so we have
\begin{eqnarray*}
	& &	\left|c _i(u)\,	 W _{\sigma\ttheta} ( u ) -c _i ( v )\, W _{\sigma\ttheta} (v)\right|\\
	&=&	\left|c _i (w)\left( W _{\sigma\ttheta}(u)- W _{\sigma\ttheta} (v)\right) +W _{\sigma\ttheta}(u)\left(  c _i (u) - c _i (w) \right)+W _{\sigma\ttheta}(v)\left(c _i (w) -c_i (v)  \right)\right|\\
	&\geqslant &	c _i (w)\,  | W _{\sigma\ttheta} ( u ) -  W _{\sigma\ttheta} (v) |  - 2 M\,\|\lambda '\| _\infty\,  | u - v |.
\end{eqnarray*}
Together with the facts
\[
	- | g(\rho _i (u) +\vartheta _0) - g(\rho _i (v) +\vartheta _0) |\geqslant -\| g'\| _\infty\, | u - v | ,
\]
and
\[
	c _i (w)-|\rho _i ' ( w )|  =c _i (w) \left( 1 -\frac{1}{|\lambda\tau ' | (\rho _i (w) )}\right)\geqslant (\inf\lambda )\left( 1 -\max _j \|(1/\tau' \lambda)\circ\rho _j \| _\infty\right) =:\delta _1 > 0 ,
\]
it follows that
\begin{eqnarray*}
	 &  &	| W _\ttheta (\rho _i ( u ) ) - W _\ttheta (\rho _i ( v )) | - L\, |\rho _i ( u ) -\rho _i ( v ) |\\
	& = &	|c _i(u)W _{\sigma\ttheta}(u) - c _i(v)W _{\sigma\ttheta}(v)+g(\rho _i (u) +\vartheta _0) -g(\rho _i (v) +\vartheta _0)| - L\, |\rho _i'(w)|\,|u-v|\\
	 &\geqslant &	c _i (w)\,\left( | W _{\sigma\ttheta } ( u ) -  W _{\sigma\ttheta } ( v )  | - L\, | u - v |\right)\\
	 &&	\qquad  +  ( L\,c _i (w) - L\,|\rho _i ' ( w )|  - 2 M\,\|\lambda '\| _\infty -\| g'\| _\infty)\, | u - v |\\
	 &\geqslant &		c _i (w) \left( | W _{\sigma\ttheta } ( u ) -  W _{\sigma\ttheta } ( v )  | - L\, | u - v |\right) + ( L\,\delta _1 -  2 M\,\|\lambda '\| _\infty -\| g'\| _\infty )\, | u - v |\\
	 &\geqslant &	c _i (w) \left( | W _{\sigma\ttheta } ( u ) -  W _{\sigma\ttheta } ( v )  | - L\, | u - v |\right) ,
\end{eqnarray*}
for all $ L\geqslant L_0:= ( 2 M\,\|\lambda '\| _\infty +\| g'\| _\infty )\delta _1 ^{-1}$.
In particular, for those $\ttheta$ such that $a_L(\sigma\ttheta)>0$,
\[
	a _L(\ttheta )\geqslant\sup _{u, v\in\J\cap I _i}| W _\ttheta ( u ) - W _\ttheta ( v ) | - L\, | u - v | \geqslant (\inf c_i) \, a_L(\sigma\ttheta )
\]
Now, we prove the lemma for $L\geqslant L _0$.
Fix any $i$, say $i=0$.
We have shown that $ \log a _L (\ttheta ) - \log a _L(\sigma\ttheta )\geqslant\log(\inf c_0)$ for a.a. $\ttheta$, which implies $\log a_L - \log a_L\circ\sigma\in L ^1_{m ^{\N _0}}$ with $\int  a _L (\ttheta ) - a _L (\sigma\ttheta  )\, d m ^{\N _0} (\ttheta) = 0 $ in view of \cite[Lemma 2]{Keller96}.
Thus, by the telescoping sum and Birkhoff's ergodic theorem, we have
\[
	\lim _{n\to\infty}\frac{\log a _L (\sigma ^n\ttheta)}{n} = \lim _{n\to\infty}\frac{1}{n}\sum _{k=1} ^n \log a _L (\sigma ^k\ttheta ) - \log a _L (\sigma ^{k-1}\ttheta ) = 0
\]
for a.a. $\ttheta $.
Finally, the claim holds also for $L\in(0,L_0)$ as $a_{L_0}(\ttheta)\leqslant a_L(\ttheta)\leqslant 2 M + 1$ for all $\ttheta$.
\end{proof}

\begin{proof}[Proof of Lemma \ref{lem:Bedford_weak}]
Let $ L' , L'' , L''' > 0 $ be the constants of Lemma \ref{lem:Bed_estlem_2} related to the given $M :=\sup _{(\ttheta ,x)\in\T^{\N _0}\times\T} | W _\ttheta (x) | $.
For the upper bound, let $ x\in\J $, $ n\in\N $ and $\ttheta\in\T ^{\N _0} $ be arbitrary.
Consider the curve $ C :\T\to\T\times\R $ by $ C ( t ) := (t , W _{\sigma ^n\ttheta} (t))$.
The invariance relation of \eqref{eq:F_invariant_n} yields
\begin{equation}
	( t , W _\ttheta ( t ) ) = F  _{[\ttheta ] _n , [x] _n}\left( C (\tau ^n t )\right)	\label{eq:F_invariant_in_proof}
\end{equation}
for all $t\in I _n (x)$.
Thus, by Lemma \ref{lem:Bed_estlem_2} we have
\begin{eqnarray*}
	\sup _{v\in I _n(x)} | W _\ttheta ( x ) - W _\ttheta ( v ) |  &\leqslant &	\sup _{u, v\in I _n(x)} | W _\ttheta ( u ) - W _\ttheta ( v ) |\\
	& = &\left|  F _{[\ttheta ] _n , [x] _n}\circ C\right| _H \leqslant\quad   L '''\,\lambda ^n (\rho _{[x] _n} (\tau ^n x))\quad =  L '''\,\lambda ^n (x) .
\end{eqnarray*} 
Therefore, the first claim is satisfied with $\overline{C} :=  L ''' $.

Next, we show the last claim.
Suppose that $ W _\ttheta $ is non-degenerate for almost all $\ttheta $.
Let $a _L :\T ^{\N _0}\to[0,\infty )$ be defined as in Lemma \ref{lem:Bed_estlem_3} for $L := L'' / L '$.
We claim that $c (\ttheta ) := ( L ' / 2 )\, a (\ttheta)$ satisfies the claimed properties.
By construction, it only remains to verify the claimed lower bound inequality.
Let $ x\in\J $ and $ n\in\N $.
Further, let $ C $ be the curve as above.
As $W _{\sigma ^n\ttheta}$ is continuous on a compact set $\J$, there are $ t _1 , t _2\in\J $ such that
\[
	a _L (\sigma ^n\ttheta )  = | W _{\sigma ^n\ttheta} ( t_1 ) - W _{\sigma ^n\ttheta} (t_2) | - L\, | t_1 - t_2 |  .
\]
Let $\hat{C} $ be the restriction of $ C $ on the interval $ [ t _1 , t _2 ] $, which forms again another curve.
By Lemma \ref{lem:Bed_estlem_2} and the relation \eqref{eq:F_invariant_in_proof}, we have
\begin{eqnarray*}
	c (\sigma ^n\ttheta )\,\lambda ^n (x)	& = &	( L' / 2 )\,\left( | W _{\sigma ^n\ttheta} ( t_1 ) - W _{\sigma ^n\ttheta} (t_2) | - L\cdot | t_1 - t_2 |\right)\,\lambda ^n ( x )\\
	&= &		2 ^{-1}\,\left( L' | W _{\sigma ^n\ttheta} ( t_1 ) - W _{\sigma ^n\ttheta} (t_2) | - L ''\cdot | t_1 - t_2 |\right)\,\lambda ^n (x )\\
	&\leqslant &	2 ^{-1}\left( L'\, |\hat{C} | _{\J, H} - L''\, |\hat{C} | _W\right)\,\lambda ^n\left(\rho _{[x] _n} (\tau ^n x )\right)\\
	&\leqslant &	2 ^{-1}\left| F _{[\ttheta] _n, [x] _n} ^n\circ\hat{C}\right| _{\J, H}\\
	&\leqslant &	2 ^{-1}\left| F _{[\ttheta] _n, [x] _n} ^n\circ C\right| _{\J, H}\\
	& = &	2 ^{-1}\sup _{u, v\in\J\cap I _n(x)} | W _\ttheta ( u ) - W _\ttheta ( v ) |\quad\leqslant \sup _{v\in\J\cap I _n(x)} | W _\ttheta ( x ) - W _\ttheta ( v ) | .
\end{eqnarray*}
Finally, the second claim follows also the above estimate.
Indeed, if $W=W_{\mathbf{0}}$ is non-degenerate, then we can choose $c:=c(\sigma ^n \mathbf{0})=(L'/2)a_L(0)>0$.
\end{proof}

\bibliography{dynamical_copy}
\bibliographystyle{alpha}

\end{document}